\newcommand{\vvNumberWithin}{subsection}%'Theorem' (and variants) and equations are numbered by subsection
\newcommand*{\FIXME}[1]{}
\newcommand*{\Q}{\mathbb{Q}}
\newcommand*{\R}{\mathbb{R}}
\newcommand*{\Z}{\mathbb{Z}}
\newcommand*{\C}{\mathbb{C}}
\DeclareMathOperator{\id}{Id} % Not exactly an operator, usually a functor.
\DeclareMathOperator{\Spec}{Spec}
\let\hom\relax% Set equal to \relax so that LaTeX thinks it's not defined
\DeclareMathOperator{\hom}{Hom}
\newcommand{\VVC}[1]{{}}
\newtheoremstyle{misc}%
     {\topsep}% space above
     {\topsep}% space below
     {}% body font
     {}% Indent amount (empty = no indent, \parindent = para indent)
     {\itshape}% Thm head font
     {}% Punctuation after thm head
     { }% Space after thm head (\newline = linebreak, { } = Normal word space. Space IS NEEDED)
     {}% Thm head spec
\newtheoremstyle{newdef}{\topsep}{\topsep}{}{}{\bfseries}{.}{ }{\thmnumber{#2}\thmnote{ #3}}
\newtheorem{master}{Master}[\vvNumberWithin]
\theoremstyle{newdef}
\theoremstyle{plain}
\newtheorem{theorem}[master]{Theorem}%[\vvNumberWithin]
\newtheorem*{theorem*}{Theorem}
\newtheorem*{result*}{Result}
\newtheorem{lemma}[master]{Lemma}
\newtheorem*{lemma*}{Lemma}
\newtheorem{corollary}[master]{Corollary}
\newtheorem*{corollary*}{Corollary}
\newtheorem{proposition}[master]{Proposition}
\newtheorem*{proposition*}{Proposition}
\newtheorem{assumption-proposition}[master]{Assumption/Proposition}
\theoremstyle{definition}
\newtheorem*{example*}{Example}
\newtheorem*{application*}{Application}
\newtheorem{definition}[master]{Definition}
\newtheorem*{definition*}{Definition}
\newtheorem{remark}[master]{Remark}
\newtheorem*{remark*}{Remark}
\newtheorem{para}[master]{}
\newtheorem*{para*}{}
\newtheorem{notation}[master]{Notation}
\newtheorem*{notation*}{Notation}
\newtheorem*{question*}{Question}
\newtheorem*{problem*}{Problem}
\theoremstyle{remark}
\newtheorem*{exercise*}{Exercise}
\numberwithin{equation}{subsection}
 \renewcommand{\theequation}{%
      \ifnum\value{subsection} > 0
      \ifnum\value{subsubsection} > 0
      \thesubsubsection.\Alph{equation}%
      \else%
      \thesubsection.\Alph{equation}%
      \fi%
      \else% 
      \thesection.\Alph{equation}%
      \fi%
    }
\newcommand{\nocontentsline}[3]{}
\newcommand{\tocless}[2]{\let\tempcontentsline=\addcontentsline\let\addcontentsline=\nocontentsline#1{#2}\hspace{-1em}\let\addcontentsline=\tempcontentsline}
\newcommand*{\vvspan}[1]{{\langle #1 \rangle}}
\newcommand*{\iso}{\cong}
\title{Motivic intersection complex of certain Shimura varieties}
\author{Vaibhav~Vaish}
\address{Stat-Math Unit, Indian Statistical Institute, 8th Mile, Mysore Road, Bangalore 560059}
\email{vaibhav\_if@isibang.ac.in}
\begin{document}
\begin{abstract} 
%Using a version of weight conservativity we demonstrate that for certain Shimura varieties (including all Shimura three-folds and the Siegel sixfold) the intermediate extension due to Wildeshaus \cite{wildeshaus_shimura_2012} compares with the construction due to Vaish \cite{vaish2017weight} and hence is canonical, in particular satisfies the intrinsic characterization of a motivic intersection complex given in \cite{wildeshaus_ic}.
Using a version of weight conservativity we demonstrate that for certain Shimura varieties (including all Shimura three-folds, most Shimura four-folds and the Siegel sixfold) the construction of the motivic intersection complex due to Wildeshaus compares with a motivic weight truncation in the sense of S.~Morel. In particular it is defined up to a unique isomorphism, and satisfies the intrinsic characterization for an intermediate extension due to Wildeshaus.
\end{abstract}
\maketitle
%\tableofcontents
\section{Introduction}
\subsection{}
   The purpose of this article is to complete and extend the results in \cite{vaish2017weight} in the particular context of Shimura varieties, and in particular construct the intersection motive for the Baily-Borel compactification of certain Shimura varieties satisfying the intrinsic characterization of Wildeshaus \cite{wildeshaus_ic}.\\
   
      Recall that in \cite[4.2.7]{vaish2017weight} one could define a canonical intersection motive for an arbitrary threefold $X$ as an object in the triangulated category of mixed motivic sheaves, $IM_X\in DM(X,\Q)$ (see \ref{intro:dmx}). One of the key limitations there was that we could not show that $IM_X$ satisfies the intrinsic characterization of an intersection motive as defined in Wildeshaus \cite{wildeshaus_ic}, or, even more elementarily, show that it is a relative Chow motive (that is of weight $0$ in the sense of Bondarko \cite{bondarko2014weights} or Hebert \cite{hebert2011structure}, or, equivalently due to Fangzhou \cite{fangzhou2016borel} lives in the full subcategory of relative Chow motives $CHM(X)$ of Corti-Hanamura \cite{cortiHanamura}). 
   
   On the other hand in \cite{wildeshaus_shimura_2012} Wildeshaus constructs an intersection motive for arbitrary Shimura varieties, satisfying a slightly weaker characterization \cite[Definition 2.10]{wildeshaus_shimura_2012} -- the key difference being that the intersection motive is not required to be defined canonically, but only ``upto radical''; however it is indeed a relative Chow motive.\\
   
   In this article, we reconcile the results of \cite{vaish2017weight} and \cite{wildeshaus_shimura_2012} where they are relevant and in particular show that in the case of Shimura three-folds, the two constructions are the same. Thus $IM_X$ is both a Chow motive and defined canonically and more generally satisfies the internal characterization of \cite{wildeshaus_ic}. Our results are also applicable in specific higher dimensional cases, for example, for most Shimura fourfolds and the Siegel sixfold.
  
 \subsection{}  
   The role of intersection motive is not merely technical. The intersection cohomology of the Baily-Borel compactification of a Shimura variety is supposed to contain useful arithmetic information -- for example, it is the natural playing ground for Galois representations associated to automorphic forms. For this reason it is useful to have a canonical intersection motive (which plays well with the Hecke operators) and not merely ``upto radical''.  
   
    However, to be useful in this context, we also need to construct the intersection motive for more than just the constant local system. For example, over Siegel sixfold, one would expect to construct the intersection motive of the Kuga-Sato families relative to its Baily-Borel compactification, and one hopes to construct the same over $\Q$, the reflex field. In general, such constructions have been of interest and several results are known -- the case of modular curves is classical \cite{scholl1994classical}, more recently we have the case of Hilbert-Blumenthal varieties (non-constant coefficients) \cite{wildeshaus2009interior} and (constant coefficients) \cite{vaish2016motivic}, Picard modular surfaces \cite{wildeshaus2015interior}, Picard modular varieties in any dimension \cite{cloitre2017interior} and most recently, the case of Siegel threefold \cite{wildeshaus2017intersection}.
       
   While the methods in the present article are not sufficient for a complete solution here, we also demonstrate construction of certain intersection motives with non-constant local systems for the Siegel sixfold.
   
 \subsection{}
We now describe the main results of the article in some detail. We begin with the main Theorem \ref{tech:mainresult} which is slightly technical. For the purpose of this article we will only work with schemes that are of finite type over a field $k$ of characteristic $0$ -- the limitation on characteristic is not entirely necessary, but our main applications of interest are in the context when $k$ is the reflex field of a Shimura variety.

Call a proper morphism $\pi:Y\rightarrow X$ to be of Abelian type of relative dimension $\le d$ if, roughly,  over each Zariski point of $X$ the motive of the fibre lives in the smallest triangulated category generated by motives of Abelian varieties of dimension $\le d$ upto arbitrary Tate twists (see \ref{def:abtype} for a precise definition). This is closely related to the motives of Abelian type of \cite[7.13]{wildeshaus_shimura_2012} but stated in a form that simplifies calculations for us.

For $d\le 2$, enough motivic cohomology calculations are known in this subcategory to yield the following:

\begin{theorem*}[See \ref{tech:mainresult}]
	For a variety $X$, let $\pi: Y\rightarrow X$ be a proper morphism with $Y$ smooth such that for some $V\subset X$ with $W=\pi^{-1}V$, $\pi|_{W}$ is an abelian scheme and $\pi|_{Y-W}$ is of Abelian type of relative dimension $\le 2$. Then for any summand $N$ of $\pi_*1_W\in DM(V)$ which realizes to a local system (upto shifts), the intermediate extension $j_{!*}N\in DM(X)$ is defined canonically as a Chow motive and satisfies the intrinsic characterization of \cite{wildeshaus_ic} (see \ref{richerIC}).
\end{theorem*}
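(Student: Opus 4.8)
The plan is to construct $j_{!*}N$ by two a priori different routes --- one canonical, one Chow-theoretic --- and then to identify them using weight conservativity. For the canonical route, note first that $N$ is a direct summand of $\pi_*1_W$, which is a relative Chow motive since $\pi|_W$ is an abelian scheme, and that $N$ realizes to a (shifted) local system by hypothesis; thus the motivic weight-truncation formalism of \cite{vaish2017weight} supplies a canonical object ${}^{w}\!j_{!*}N\in DM(X)$, constructed via a motivic weight truncation, equipped with canonical maps $j_!N\to{}^{w}\!j_{!*}N\to j_*N$, whose restriction to $V$ is $N$, whose $i^*$ (resp.\ $i^!$) satisfies the one-sided weight bound mirroring the support (resp.\ cosupport) condition of an intermediate extension, and which realizes to the intersection complex $\mathrm{IC}(X,N_{\mathrm{real}})$. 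For the Chow-theoretic route I would invoke Wildeshaus's gluing construction \cite{wildeshaus_shimura_2012}, which --- whenever the boundary of $\pi$ avoids a narrow band of weights --- produces a relative Chow motive $M'\in CHM(X)$ restricting to $N$, a priori only well-defined modulo the radical of $CHM(X)$. Both constructions being additive in $N$, it then suffices to establish an isomorphism ${}^{w}\!j_{!*}N\simeq M'$: this shows simultaneously that ${}^{w}\!j_{!*}N$ is a relative Chow motive and that $M'$ has a canonical refinement.

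Next I would verify that Wildeshaus's hypotheses hold in the present situation. His construction requires the K\"unneth pieces of the boundary motives $i^*j_*N$ and $i^!j_!N$, with $i\colon X-V\hookrightarrow X$, to carry no weights in a small interval around the weight of $N$; equivalently, certain $\Hom$-groups in $DM(X)$ between those boundary motives and Tate twists of $N$ must vanish. Since $\pi|_{Y-W}$ is of Abelian type of relative dimension $\le 2$, the boundary motives lie, Zariski-locally and stratum by stratum over $X-V$, in the triangulated subcategory generated by motives of abelian varieties of dimension $\le 2$ and their Tate twists; after decomposing into K\"unneth summands $h^i$, the required vanishing reduces to motivic-cohomology computations for products of abelian varieties of dimension $\le 2$ --- Beilinson--Soul\'e-type vanishing in low degree together with the exterior-algebra structure of $M(A)$. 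Granting this, $M'$ exists, realizes (being built by the same weight-truncation mechanism) to $\mathrm{IC}(X,N_{\mathrm{real}})$, and its $i^*/i^!$ obey the same one-sided weight bounds as ${}^{w}\!j_{!*}N$.

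The identification is then a conservativity argument. The realization of $M'$ is pure, namely $\mathrm{IC}(X,N_{\mathrm{real}})$, so the version of weight conservativity of \cite{vaish2017weight} --- valid here precisely because the fibres are of Abelian type of relative dimension $\le 2$ --- forces $M'$ to be pure of the expected weight in the motivic sense; the universal property of the motivic weight truncation then factors the canonical map $j_!N\to M'$ through ${}^{w}\!j_{!*}N$. The resulting morphism ${}^{w}\!j_{!*}N\to M'$ restricts to the identity on $V$, so its cone is supported on $X-V$, and since both sides realize to $\mathrm{IC}(X,N_{\mathrm{real}})$ and the morphism realizes to the identity (restriction to $V$ is an isomorphism on $\Hom$-groups between intersection complexes), the cone realizes to zero; one more application of weight conservativity gives that the cone vanishes, so ${}^{w}\!j_{!*}N\simeq M'$. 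We then take this common object as the definition of $j_{!*}N$; it is canonical and a relative Chow motive.

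It remains to confirm the intrinsic characterization of \cite{wildeshaus_ic} (see \ref{richerIC}). Beyond the Chow-motive property and the one-sided boundary weight bounds --- both already in hand --- this requires the absence of nonzero sub- or quotient motives of $j_{!*}N$ supported on $X-V$, and here I would again apply conservativity, reducing the claim to the classical fact that $\mathrm{IC}(X,N_{\mathrm{real}})$ admits no such sub- or quotients, the boundary weight bounds being what makes the relevant $\Hom$-groups accessible. The main obstacle throughout is the vanishing input of the second paragraph together with the availability of weight conservativity itself: pinning down exactly which degrees and Tate twists occur in the boundary of $\pi$ and checking that they fall in the range where motivic cohomology of abelian varieties of dimension $\le 2$ is understood is precisely what confines the argument to relative dimension $\le 2$, and is the step with no evident extension to larger fibres.
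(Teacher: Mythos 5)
Your high-level strategy---reconcile Wildeshaus's intersection motive with a Morel-style motivic weight truncation via conservativity on motives of abelian type---matches the paper's. But the logical organization has a gap. You begin by asserting that the weight-truncation formalism of \cite{vaish2017weight} ``supplies a canonical object ${}^{w}\!j_{!*}N\in DM(X)$.'' That is not available a priori in the present generality: the Morel-style $m$-structure of \ref{define:motivicMorelTStructure} is defined only on a subcategory $D'(X)\subset DM(X)$ of objects admitting a decomposition (see \ref{gluing:mainresult}), and it is precisely not obvious that $j_*N$, or its restriction to $X-V$, lies in $D'$; \cite{vaish2017weight} settles this only for threefolds with a fixed $F$, not for maps of abelian type of relative dimension $\le 2$. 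The paper inverts the order of argument: it takes Wildeshaus's $j_{!*}N$ as a hypothesis (so your second paragraph re-deriving it via motivic cohomology vanishings is unnecessary), verifies directly that $i_V^*j_{!*}N\in {^wDM^{\le d+n}}(X-V)$ and that the cone $B$ of $j_{!*}N\to j_{V*}N$ is $i_{V*}C$ with $C\in {^wDM^{>d+n}}(X-V)$, observes that this exhibits a weight decomposition (so membership in $D'$ is a consequence, not a prerequisite), and then applies orthogonality to get $\hom(j_{!*}N, B[i])=0$, hence injectivity of $\mathrm{End}(j_{!*}N)\to\mathrm{End}(N)$ outright. There is no second object to construct and then identify.

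The technical engine for verifying those two weight bounds is also missing from your sketch. The paper argues by Noetherian induction on the boundary, using the spreading-out lemmas \ref{spreading:truncation} and \ref{spreading:realization} to reduce to a statement over a dense open of each stratum; there one combines \ref{weightsOnIC:sheaves} (weight bounds on $i^*\mathrm{IC}$ and $i^!\mathrm{IC}$ in the $\ell$-adic realization) with the conservativity of \ref{conservativityonab} to lift the bound back to $DM$. The input that makes this legitimate is \ref{abelianTypeRestrictWell}, guaranteeing that pointwise restrictions of $\pi_*1_Y$ land in $DM^{Ab}_2$, together with \ref{ICissummandofchow}, which exhibits $j_{!*}N$ as a summand of $\pi_*1_Y$ so that the same holds for it. You invoke conservativity in the right spirit, but without the stratum-by-stratum spreading out, Noetherian induction, and the summand reduction, the pointwise conservativity statement does not by itself control the weights of $i_V^*j_{!*}N$ over all of $X-V$.
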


As stated earlier this result is a consequence of reconciliation of the corresponding calculations in \cite{vaish2017weight} (which is where the restriction of $d\le 2$ comes from and where the motivic cohomology calculations were used) and \cite{wildeshaus_shimura_2012} (which is where the restriction to Abelian type comes from). \\

This general result yields to more specific results for Shimura varieties (see \ref{shimura:situation} for precise definitions):

\begin{corollary*}[See \ref{shimura3folds}] 
	Let $X$ be any arbitrary Shimura threefold, or the Siegel sixfold (defined over it's reflex field $k$) and let $Y:=X^*$ denote it's Baily-Borel compactification. Then, the intersection motive $IM_Y$ exists in $DM(Y)$ in the sense of \cite{wildeshaus_ic} (see \ref{richerIC}). 
\end{corollary*}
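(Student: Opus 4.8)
The plan is to apply Theorem~\ref{tech:mainresult} to the canonical proper morphism $p\colon\bar X\to X^{*}$ from a smooth toroidal compactification $\bar X$ of $X$, with $V:=X$, so that $W:=p^{-1}(V)=X$ and $p|_{W}=\id_{X}$ is an abelian scheme of relative dimension $0$. Since $p_{*}1_{W}=1_{X}$ realizes, up to the shift by $\dim X$, to the constant perverse sheaf, the theorem will produce $j_{!*}1_{X}\in DM(X^{*})$ \emph{canonically}, as a Chow motive satisfying the intrinsic characterization of \cite{wildeshaus_ic}; this object is by definition (see~\ref{richerIC}) the intersection motive $IM_{Y}$. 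The whole content is therefore to verify that $p|_{\bar X\setminus X}$ is of Abelian type of relative dimension $\le2$ in the sense of~\ref{def:abtype}, and it is precisely here that the hypotheses $\dim X=3$, respectively the restriction to the \emph{six}fold among the Siegel varieties, are used.

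First I would fix the compactifications. Replacing $X$ if necessary by a neat auxiliary level and descending at the end by the usual transfer argument, exactly as in \cite{wildeshaus_shimura_2012} (which is harmless, as Theorem~\ref{tech:mainresult} delivers a canonical object), I may assume $X$ is a smooth quasi-projective $k$-variety; a smooth projective cone decomposition --- Ash--Mumford--Rapoport--Tai in general, Faltings--Chai in the Siegel case --- then gives a smooth projective $k$-variety $\bar X$ together with a proper $p\colon\bar X\to X^{*}$ restricting to the identity over $X$. By the structure theory of such compactifications, the preimage under $p$ of a Baily--Borel boundary stratum $X_{\beta}\subset X^{*}$ --- which is itself the Shimura variety of the hermitian part of a maximal proper rational parabolic $P$ --- is a union of torus embeddings of a torsor, under a torus $T_{\beta}$, over an abelian scheme $A_{\beta}\to X_{\beta}$; concretely $\operatorname{rk}T_{\beta}=\dim W_{-2}(R_{u}P)$ and $\dim(A_{\beta}/X_{\beta})=\tfrac12\dim(W_{-1}/W_{-2})(R_{u}P)$. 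Hence the fibre of $p$ over any point $x\in X_{\beta}$ is a proper $k(x)$-variety admitting a finite stratification by torsors, under subquotient tori of $T_{\beta}$, over the abelian variety $A_{\beta,x}:=(A_{\beta})_{x}$.

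The heart of the proof is the bound $\dim(A_{\beta}/X_{\beta})\le2$. Granting it: the motive of a torus-torsor over an abelian variety lies in the thick triangulated subcategory generated under arbitrary Tate twists by the motive of that abelian variety, so localizing along the stratification shows that the motive of every fibre of $p$ lies in the thick subcategory generated under Tate twists by $M(A_{\beta,x})$, with $\dim A_{\beta,x}\le2$ --- which is exactly the requirement of~\ref{def:abtype}. As for the bound: the mixed Shimura variety governing the boundary chart at $P$ has dimension $\dim X$, hence $\dim X_{\beta}+\dim(A_{\beta}/X_{\beta})+\operatorname{rk}T_{\beta}=\dim X$; since $W_{-2}(R_{u}P)\ne0$ for a maximal proper rational parabolic of a group of hermitian type, $\operatorname{rk}T_{\beta}\ge1$, so in the threefold case $\dim(A_{\beta}/X_{\beta})=\dim X-\dim X_{\beta}-\operatorname{rk}T_{\beta}\le3-0-1=2$. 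For the Siegel sixfold ($g=3$) the strata are indexed by the corank $r\in\{1,2,3\}$, the hermitian part is the Siegel variety of genus $g-r$, and $A_{\beta}$ is the $r$-fold fibre product over it of the universal abelian scheme, so $\dim(A_{\beta}/X_{\beta})=r(g-r)\in\{2,2,0\}\le2$ --- an equality for $r=1,2$, which is precisely why the method does not reach $g\ge4$. In either case $p|_{\bar X\setminus X}$ is of Abelian type of relative dimension $\le2$, Theorem~\ref{tech:mainresult} applies, and we are done.

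I expect the only real obstacle to be bookkeeping rather than new input: recording the above description of the fibres of $p$ over the Baily--Borel boundary (the Fourier--Jacobi / mixed-Shimura picture) in a form literally matching Definition~\ref{def:abtype}, identifying their motives inside the subcategory generated by abelian varieties up to Tate twists, and checking that the descent from the auxiliary neat level does not disturb canonicity. The one genuinely rigid point is the inequality $\dim(A_{\beta}/X_{\beta})\le2$: it fails already for $\mathcal A_{4}$, and it is the geometric manifestation of the restriction $d\le2$ that Theorem~\ref{tech:mainresult} inherits from the motivic cohomology computations of \cite{vaish2017weight}.
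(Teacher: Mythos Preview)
Your approach is essentially the same as the paper's: the paper factors through the group-theoretic restatement \ref{shimura:tech:mainresult} (bounding $\dim V_1 = \dim(W_1/U_1)\le 2$), while you compute the relative dimension of the abelian part $A_\beta/X_\beta$ directly from the mixed-Shimura description of the toroidal boundary; these are the same number, and your explicit counts ($\le 3-0-1$ for threefolds, $r(g-r)\in\{2,2,0\}$ for $g=3$) match the paper's one-line justifications in \ref{shimura3folds} and \ref{siegel6}.

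One omission: Theorem~\ref{tech:mainresult} does not \emph{construct} $j_{!*}1_X$; it assumes as input that the intermediate extension already exists in the weaker sense of \ref{def:motivicIC} and realizes to $IC_{X^*}$, and then upgrades it to the strong sense of \ref{richerIC}. You therefore need to invoke \cite[0.1, 0.2]{wildeshaus_shimura_2012} (as the paper does in stating \ref{shimura:tech:mainresult}) to supply that hypothesis before applying the theorem.
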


Note that, the methods here also work for most Shimura fourfolds, see \ref{result:4folds}. As stated earlier, for the Siegel sixfold, we can even work with certain local systems:
\begin{corollary*}[See \ref{siegel6}] 
	Let $X$ be the Siegel sixfold (defined over it's reflex field $k$) and let $Y:=X^*$ denote it's Baily-Borel compactification. Let $\pi:A\rightarrow X$ denote the universal abelian scheme over $X$ (which exists, for appropriate choice of arithmetic subgroups). Then, the intersection motive $IM_Y(N) := j_{!*}N$ exists in $DM(Y)$ in the sense of \cite{wildeshaus_ic} (see \ref{richerIC}) where $N$ is any summand of $\pi_*1_A$ which realizes to a local system (upto shifts).
\end{corollary*}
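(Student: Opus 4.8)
The plan is to deduce the statement directly from Theorem~\ref{tech:mainresult}, applied to a smooth toroidal compactification of the universal abelian scheme $A$ lying properly over the Baily-Borel compactification $Y=X^*$. For a fine enough choice of arithmetic subgroup and of a suitable projective smooth rational polyhedral cone decomposition, the theory of toroidal compactifications of Kuga-Sato families and their compatibility with the Baily-Borel compactification furnishes, over the reflex field $k$ (which for the Siegel sixfold is $\Q$), a smooth projective variety $\bar A$ together with a proper morphism $f\colon\bar A\to Y$ extending the composite $A\to X\hookrightarrow Y$ and such that $f^{-1}(X)=A$ with $f|_A=\pi$. In the notation of Theorem~\ref{tech:mainresult} we then take the base ``$X$'' to be $Y$, the open dense subvariety ``$V$'' to be the Siegel sixfold $X$ (with open immersion $j\colon X\hookrightarrow Y$), the smooth source ``$Y$'' to be $\bar A$, the open part ``$W$'' to be $A$, and ``$\pi$'' to be the abelian scheme $\pi\colon A\to X$.

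With this dictionary, the only hypothesis of Theorem~\ref{tech:mainresult} that is not immediate is that $f|_{\bar A-A}$ over the boundary $Y-X$ is of Abelian type of relative dimension $\le 2$ in the sense of~\ref{def:abtype}; verifying this is the heart of the matter and is where the geometry of $\mathcal A_3$ enters. The boundary $Y-X$ is stratified by the Baily-Borel compactifications of $\mathcal A_2$, $\mathcal A_1$ and $\mathcal A_0$, and over the locally closed stratum attached to $\mathcal A_{3-r}$ (so $r\ge 1$) the universal semi-abelian degeneration has abelian part of dimension $3-r\le 2$. After passing to a suitable stratification of the fibres, each fibre of $f$ over such a point is built out of products of that abelian part with toric varieties; since the motive of a toric variety is a finite direct sum of Tate twists of the unit, each such fibral motive lies in the thick triangulated subcategory generated by motives of abelian varieties of dimension $\le 2$ up to arbitrary Tate twists, which is precisely the condition in~\ref{def:abtype}. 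This local analysis is essentially the one underlying the theory of motives of Abelian type in~\cite{wildeshaus_shimura_2012}, and for $\mathcal A_3$ itself behind~\cite{wildeshaus2017intersection}; the extra care is to ensure it is carried out over $\Q$. Note that it is exactly the bound $\le 2$ obtained here — forced by the abelian part dropping dimension along every boundary stratum of $\mathcal A_3$ — that keeps us within the range where the motivic cohomology computations of~\cite{vaish2017weight} feeding into Theorem~\ref{tech:mainresult} are available, which is why the Siegel sixfold rather than a higher-dimensional Siegel variety appears.

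Granting this verification, Theorem~\ref{tech:mainresult} applies verbatim: for any summand $N$ of $\pi_*1_A\in DM(X)$ that realizes to a local system (up to shifts) — for example the summands of the canonical relative Chow-Künneth decomposition of $\pi_*1_A$, whose $i$-th piece realizes to $R^i\pi_*\Q\cong\wedge^i R^1\pi_*\Q$ up to shift, or Tate twists of these — the intermediate extension $j_{!*}N\in DM(Y)$ is defined canonically, is a relative Chow motive, and satisfies the intrinsic characterization of~\cite{wildeshaus_ic} recalled in~\ref{richerIC}. Setting $IM_Y(N):=j_{!*}N$ gives the claim.

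The step I expect to be the main obstacle is the boundary verification of the second paragraph: one must organise the local structure of the Kuga-Sato toroidal compactification over each stratum of $Y$ and confirm that the fibral motives genuinely land in the category of~\ref{def:abtype} with the sharp dimension bound $\le 2$. A secondary, more bookkeeping-type issue is arranging a level structure fine enough that $A$, $\bar A$ and $f$ are all defined over the reflex field $\Q$.
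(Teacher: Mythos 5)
Your overall strategy — apply Theorem~\ref{tech:mainresult} with the base playing the role of $Y=X^*$, the open part $V=X$, and a smooth projective toroidal compactification $\bar A\to Y$ of the universal abelian scheme, checking the abelian-type condition via the boundary geometry of $\mathcal A_3$ — is the same route the paper takes, and the dimension count in your second paragraph is the same observation the paper uses (the abelian parts appearing in the boundary of a toroidal compactification of the Kuga--Sato family drop strictly below the relative dimension $3$, hence stay $\le 2$).

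There is, however, a genuine gap in the claim that Theorem~\ref{tech:mainresult} now ``applies verbatim.'' You address only the abelian-type hypothesis, but Theorem~\ref{tech:mainresult} carries two further, nontrivial hypotheses: that the intermediate extension $j_{!*}N$ already exists in the weaker sense of~\ref{def:motivicIC} (i.e., Wildeshaus's ``up to radical'' intersection motive in $CHM(Y)$), and that its realization $r(j_{!*}N)$ is the usual intersection complex $IC_Y(\mathcal L)[n]$. Neither of these comes for free from the geometric setup you describe; both require the Hom-vanishing and weight analysis of Wildeshaus's construction. In the paper these are supplied by citing \cite[Theorem 0.3]{wildeshaus_shimura_2012}, which is exactly the statement producing the (weak) motivic intersection complex for direct summands of $\pi_*1_A$ over the Baily--Borel compactification together with its identification under realization. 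Without invoking this (or reproving it), the conclusion of Theorem~\ref{tech:mainresult} — that $j_{!*}N$ exists in the stronger sense of~\ref{richerIC} — does not follow. Your proof should be amended to make this input explicit; once it is, the argument closes.
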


\subsection{} The main method in this article is motivated by \cite{wildeshaus_conservativity} of Wildeshaus -- we use conservativity of the realization functors restricted to the triangulated category generated by motives of Abelian varieties (and arbitrary Tate twists) to calculate weights. However, for our purpose, it is not sufficient to work with weights in the sense of Bondarko \cite{bondarko2014weights} or Hebert \cite{hebert2011structure} (which are motivic version of weights in the sense of Deligne), but we need to work with the motivic analogue of constructions due to S.~Morel \cite[\S 3]{morelThesis} (or more precisely, the mild generalization in \cite[\S 3]{arvindVaish}). We briefly motivate the method below.

Recall that for appropriate fields $k$, any variety $X/k$ and any monotone step function $F$, in \cite[3.1.7]{arvindVaish} one defines a pair of subcategories $({^wD^{\le F}(X)}, {^wD^{> F}(X)})$ of the appropriate category of mixed sheaves over $X$ denoted there as $D(X)$ (e.g. Deligne's category of mixed $l$-adic sheaves $D^b_m(X, \Q_l)$ \cite{BBD} for $k$ finite or the derived category of mixed Hodge modules of Saito $D^bMHM(X)$ \cite{saito_mhm_1990} when $k=\C$). These subcategories form both a weight structure and a $t$-structure on $D^b(X)$ and are inspired by S.~Morel's construction \cite[\S 3]{morelThesis}, where she also demonstrates the fundamental relation (see \cite[3.4.2]{morelThesis}):
\[
	j_{!*}(\mathcal L[d]) = w_{\le n+d}j_*\mathcal L[d] \longrightarrow j_*(\mathcal L[d])
\]
where $\mathcal L$ is a local system of weight $n$ on any $j:U\subset X$ regular, $X$ equidimensional of dimension $d$, and $w_{\le n+d}$ denotes the truncation for the Morel's $t$-structure for the constant function $F=n+d$. 

In \cite{vaish2017weight} we constructed motivic analogue of this $t$-structure and weight structure, that is subcategories $({^wDM^{\le F}(X)}, {^wDM^{> F}(X)})$, inside appropriate subcategories of $DM(X)$, for $F=\id, \id+1$ and $F=(3\mapsto 3, 2\mapsto 3, 1\mapsto 2, 0\mapsto 2)$ (in fact, as we will show here, we could have worked with $F$ arbitrary provided we restrict to the triangulated category generated by motives of surfaces upto arbitrary Tate twists). This allows us to recover the intermediate extension as a weight truncation in the motivic context.

If we restrict to Chow motives of abelian type, for such a motive $N$, the intermediate extension $j_{!*}N$ has also been constructed by Wildeshaus \cite[0.1]{wildeshaus_shimura_2012}, as a weight truncation (in the sense of Bondarko) of $j_*N$ which is minimal (see \cite[2.2]{wildeshaus_shimura_2012} for a definition of minimality) yielding naturally a map:
\[
	j_{!*}N \longrightarrow j_*N
\]

 The key property here that we are interested in is that this $j_{!*}N$ realizes to the intersection complex in the category of constructible $\Q_l$ sheaves due to \cite{ekedahlAdic}. While this is not a mixed category, realizations of morphisms here play well with the weight filtrations \cite[2.1.2]{bondarko_weights} allowing us to recover milder analogues of S.~Morel's construction. 
 
 Since we are restricting to the category of motives of Abelian type, following \cite{wildeshaus_conservativity} we observe that the realizations here are conservative; and in particular we use the known calculations about weights in the realizations to conclude that 
 \begin{align*}
 	j_{!*}N\in {^wDM^{\le F}}(X)& &Cone(j_{!*}N\longrightarrow j_*N)\in  {^wDM^{> F}}(X)
 \end{align*}
for appropriate $F$. This is enough to conclude that the two calculations of intermediate extension are the same, thereby leading to our main result.

\subsection{Outline} 
	Section \S \ref{sec:prelim} contains preliminaries which are well known in literature or are implicit elsewhere. In \S \ref{sec:gluing} we talk about a variant of gluing of $m$-structures (by which we mean a pair of categories which is both a $t$-structure and a weight structure, see \ref{def:mstruct}) -- the useful result is \ref{gluing:mainresult} which basically says that the full subcategory of objects for which (appropriate) truncations are defined form a triangulated subcategory and the corresponding truncations then come from an $m$-structure. In \S \ref{sec:newmotives} we discuss motivic sheaves briefly, with the purpose of fixing the key notations. We also briefly discuss realizations in \ref{discuss:realizations} and even though the category in which the realization functors map to is not mixed, we define an analogue of the subcategories $^wD^{\le n}$ (resp. $^wD^{>n}$) of S.~Morel in \ref{def:weightTruncationInRealization}. In \S \ref{sec:chowForAb} we recall Chow-Kunneth decomposition for abelian varieties while in \S \ref{sec:motivic_ic} we discuss (motivic) intersection complex and weights on the same.

	Section \S \ref{sec:mainresults} contains the main technical results. In \S \ref{sec:overField} we discuss the analogue of Morel's weight truncations over a field. The main result is \ref{tstruct:DMAb}, which is already implicit in \cite{vaish2017weight} though not stated explicitly, and which states that motivic analogue of Morel's $m$-structure can be defined on the triangulated category generated by abelian varieties of dimension $\le 2$ upto arbitrary Tate twists. In \S \ref{sec:spread} we relativize this construction. In particular we prove a conservativity principle \ref{weightsOfRealization} and it's converse \ref{spreading:truncation} eventually leading to the main theorem \ref{tech:mainresult}. In the proof, instead of working with the formalism of gluing for motives of abelian type \ref{def:abtype} we use the simpler result \ref{abelianTypeRestrictWell}.
	
	Finally in \S \ref{sec:shimura} we summarize some consequences of our result in the context of Shimura varieties. After briefly discussing the situation in \ref{shimura:situation} we state our main result in \ref{shimura:tech:mainresult} which has consequences for all Shimura varieties of $\dim \le 3$ (see \ref{shimura3folds}) as well as for the Siegel six fold (see \ref{siegel6} and \ref{siegel6local}).

\subsection{Acknowledgement} This research was supported by the grant under INSPIRE fellowship scheme, DST/INSPIRE/04/2015/000120 and the hospitable atmosphere at the Indian Statistical Institute, Bangalore. We would also like to thank A.~Nair and Wildeshaus for useful discussions.

\subsection{Notation} All schemes $X$ will be separated of finite type over a base field $k$ which will be assumed to be of characteristic $0$. For a scheme $X$, $X_{red}$ denotes the underlying reduced scheme. By $\Spec k\hookrightarrow X$ we mean a Zariski point $x=\Spec k$ in $X$. A locally closed $Z\subset X$ will always be given the reduced induced sub-scheme structure.
\section{Preliminaries}\label{sec:prelim}
\subsection{Gluing}\label{sec:gluing}% 

\begin{definition}\label{def:mstruct}
A \emph{$t$-structure} on a triangulated category $D$ (see \cite{BBD}) is a pair of full subcategories $(D^{\le }, D^{>})$ satisfying three properties:
\begin{itemize}
\item (Orthogonality) $\hom (a,b)=0, \forall a\in D^{\le }, b\in D^{>}$
\item (Invariance) $D^{\le }[1]\subset D^{\le }$, and $D^{> }[-1]\subset D^{>}$
\item (Decomposition) $\forall a\in D$, there is a distinguished triangle $a_{\le}\rightarrow a\rightarrow a_>\rightarrow $ with $a_{\le}\in D^{\le }$ and $a_>\in D^{>}$. 
\end{itemize}

A \emph{weight structure} (also called a co-$t$-structure) on a triangulated category $D$ (see \cite{bondarko_weights}, for example) is the same as a $t$-structure, except that instead of invariance, it satisfies co-invariance, and we need an additional condition for closure under summands (which is automatic for $t$-structures):
\begin{itemize}
	\item (Karaubi-closed) $D^{\le }$ and $D^{>}$ are closed under taking summands.
	\item (Orthogonality) $\hom (a,b)=0, \forall a\in D^{\le }, b\in D^{>}$
	\item (Co-invariance) $D^{\le }[-1]\subset D^{\le }$, and $D^{> }[1]\subset D^{>}$
	\item (Decomposition) $\forall a\in D$, there is a distinguished triangle $a_{\le}\rightarrow a\rightarrow a_>\rightarrow $ with $a_{\le}\in D^{\le }$ and $a_>\in D^{>}$. 
\end{itemize}

	We define an \emph{$m$-structure} to be a pair of full subcategories which form both a $t$-structure and a weight structure. In particular, for an $m$-structure $D^{\le }$ and $D^>$ are triangulated subcategories and $a\mapsto a_{\le}$ as well as $a\mapsto a_>$ are triangulated functors. 
\end{definition}

\begin{definition}
	Let $D$ be a triangulated category, and $S\subset D$ be a collection of objects of $D$. We define $\vvspan{S}$, the span of $S$, to be the smallest triangulated subcategory of $D$ containing $S$ which is closed under taking summands. We do not insist $\vvspan S$ to be closed under arbitrary direct sums. 
	
	The objects of $\vvspan S$ can be constructed by (finitely many iterations of) taking shifts, extensions and summands of objects of $S$. 
\end{definition}

Then we have the following proposition which can be proved by an easy induction.
\begin{proposition}\label{tFromGenerators}
	Let $A,B,H\subset D$ be a collection of objects of a triangulated category $D$. Assume $\hom(A,B[n])=0$ for all $n\in \Z$, and
	\[
		h\in H\Rightarrow \exists a\in A, b\in B\text{ such that there is}\text{ a distinguished triangle }a\rightarrow h\rightarrow b\rightarrow.
	\]
	Then if  $\vvspan A, \vvspan B\subset \vvspan H$, the pair $(\vvspan A , \vvspan B )$ is a $m$-structure on the triangulated subcategory $\vvspan H $.
\end{proposition}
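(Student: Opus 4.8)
The plan is to verify only the two properties of an $m$-structure that are not formal, namely orthogonality and the existence of decomposition triangles; the remaining axioms are automatic. Indeed, $\vvspan{A}$ and $\vvspan{B}$ are by definition triangulated subcategories of $D$ closed under summands, hence stable under all shifts (so both the invariance condition of a $t$-structure and the co-invariance condition of a weight structure hold) and Karoubi-closed. Since they are stable under shifts, the orthogonality we must establish between them is equivalent to the vanishing of all the shifted $\hom$-groups, and once the decomposition triangles exist on $\vvspan{H}$ nothing else is left to check.

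\emph{Orthogonality.} I would prove $\hom(\vvspan{A}, \vvspan{B}[n]) = 0$ for every $n\in\Z$ by a dévissage in two variables. Fix $b\in B$; the full subcategory of objects $x\in D$ with $\hom(x, b[n]) = 0$ for all $n$ is, by the long exact sequence attached to a triangle and the additivity of $\hom$, a triangulated subcategory of $D$ closed under summands, and it contains $A$ by hypothesis, hence it contains $\vvspan{A}$. Thus $\hom(\vvspan{A}, B[n]) = 0$ for all $n$. Now fix $x\in\vvspan{A}$ and run the same argument in the second variable: the full subcategory of $y\in D$ with $\hom(x, y[n]) = 0$ for all $n$ is triangulated and closed under summands, and it contains $B$ by the step just proved, hence contains $\vvspan{B}$. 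This yields orthogonality together with all its shifted forms.

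\emph{Decomposition.} I would let $\mathcal C\subset D$ be the full subcategory of objects $h$ for which there is a distinguished triangle $a\to h\to b\to a[1]$ with $a\in\vvspan{A}$ and $b\in\vvspan{B}$. By the hypothesis on $H$, and taking $b = 0$ (resp. $a = 0$), we have $H\cup\vvspan{A}\cup\vvspan{B}\subset\mathcal C$, so it is enough to show that $\mathcal C$ is a triangulated subcategory closed under summands: then $\vvspan{H}\subset\mathcal C$ and the decomposition axiom holds on $\vvspan{H}$. Stability under shifts is clear by shifting the triangle. For stability under cones, take a triangle $h_1\to h_2\to h_3\to h_1[1]$ with $h_1,h_3\in\mathcal C$ and fix decompositions $a_i\to h_i\to b_i$ for $i=1,3$; orthogonality forces $\hom(a_3,b_1[1]) = \hom(a_3[-1],b_1) = 0$, so the connecting map $h_3\to h_1[1]$ can be pushed, compatibly with the chosen decompositions, to maps $a_3\to a_1[1]$ and $b_3\to b_1[1]$, and feeding these into the octahedral axiom (the $3\times 3$/nine-diagram) produces the missing middle triangle $a_2\to h_2\to b_2$ with $a_2\in\vvspan{A}$ and $b_2\in\vvspan{B}$, both automatically since $\vvspan{A},\vvspan{B}$ are triangulated. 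For stability under summands, suppose $h\oplus h'\in\mathcal C$ with decomposition $a\xrightarrow{u} h\oplus h'\xrightarrow{v} b\to a[1]$ and let $e$ be the projector onto $h$; the vanishing of $\hom(a,b)$, $\hom(a,b[-1])$ and $\hom(a[1],b)$ shows that $eu$ and $ve$ factor uniquely through $u$ and $v$, producing idempotents $\alpha$ on $a$ and $\beta$ on $b$ with $(\alpha,e,\beta)$ an idempotent endomorphism of the triangle, and splitting $\alpha$ in $\vvspan{A}$ and $\beta$ in $\vvspan{B}$ (both idempotent-complete) exhibits the triangle as a direct sum, one summand of which is a decomposition $a_1\to h\to b_1$ of $h$. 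Hence $\mathcal C$ is triangulated and Karoubi-closed, $\vvspan{H}\subset\mathcal C$, and $(\vvspan{A},\vvspan{B})$ is an $m$-structure on $\vvspan{H}$.

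I expect the stability of $\mathcal C$ under cones to be the only genuine obstacle: it is the single place where one manipulates triangles rather than $\hom$-groups, and some care is needed to choose the pushed-down maps $a_3\to a_1[1]$ and $b_3\to b_1[1]$ so that the nine-diagram actually closes up. This is precisely the argument underlying the gluing of $t$-structures in \cite[1.3]{BBD} and of weight structures in \cite{bondarko_weights}, so it can be quoted rather than redone; with it in hand, everything else is the routine dévissage above, which is the ``easy induction'' alluded to in the statement.
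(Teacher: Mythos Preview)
Your argument is correct and is precisely the ``easy induction'' the paper has in mind: the paper itself merely cites \cite[2.1.4]{vaish2016motivic}, but the identical strategy---d\'evissage for orthogonality, then lifting morphisms (resp.\ idempotents) through the decomposition triangles via the established orthogonality to handle cones (resp.\ summands)---is written out in full in the proof of Proposition~\ref{gluing:mainresult} of the present paper. The only place that deserves a word of caution is the last clause of your summand argument: that splitting the idempotent endomorphism $(\alpha,e,\beta)$ yields two \emph{distinguished} triangles is not automatic in a general triangulated category, but it does follow here from the orthogonality $\hom(\vvspan{A}[1],\vvspan{B})=0$ (or, as in the proof of \ref{gluing:mainresult}, one simply defines the third term as the cone of $a_1\to h$ and then identifies it with $b_1$ by comparing direct sums).
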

\begin{proof}
	See \cite[2.1.4]{vaish2016motivic}. 
\end{proof}

\begin{para}\label{gluing:situation}
	We fix a scheme $X$ (which will quickly be assumed to be Noetherian of finite type over a field $k$) and work with (versions of) derived category of (motivic) sheaves on sub-schemes $W$ of $X$. More generally, we assume that for each $W\hookrightarrow X$ we are given a triangulated subcategory $D_W$, and for each $f:W\hookrightarrow W'$, adjoint pairs:
	\begin{align*}
		f^*:D_{W'}\leftrightarrows D_W:f_* & & f_!:D_W\leftrightarrows D_{W'}:f^!
	\end{align*}
	which satisfy the formalism of Grothendieck's four functors. We also assume that for each (Zariski) point $\epsilon:\Spec L\hookrightarrow X$, we have a triangulated category $D(L)$ such that continuity holds (see \cite[\S 3]{vaish2017punctual} for a detailed summary). In particular there is a pullback $\epsilon^* : D_X\rightarrow D(L)$. If $Y$ denotes the closure of $\epsilon$ in $X$, we define:
	\begin{align*}
		\epsilon^! := \epsilon_Y^*f^!:D_X\rightarrow D(L) & &\text{ where }& &\epsilon:\Spec L \overset{\epsilon_Y}\hookrightarrow Y\overset f\hookrightarrow X\text{ is the natural factorization.}
	\end{align*}
\end{para}

\begin{definition}\label{gluing:spreadingOut}
		Assume the situation of \ref{gluing:situation} on a scheme $X$. Further assume that for each $\Spec k\hookrightarrow X$, we are given a $m$-structure $(D^{\le}(k), D^{>}(k))$ on a full subcategory $D'(k)\subset D(k)$. 
		For any $U\hookrightarrow X$, define
		\begin{align*}
			D^{\le}(U)	:=\{a&\in D_U\big| \epsilon^*(a)\in D^{\le}(k)\text{ for }\epsilon:\Spec k\rightarrow U\text{ any point of }U\} \\
			D^{>}(U)	:=\{a&\in D_U\big| \epsilon^!(a)\in D^{>}(k)\text{ for }\epsilon:\Spec k\rightarrow U\text{ any point of }U\}\\
			D'(U) := \{a&\in D_U\big| \exists b\rightarrow a\rightarrow c\rightarrow \text{ s.t. }b\in D^{\le}(U), c\in D^{>}(U)\}
		\end{align*}
		as full subcategories. In particular if $f:S\hookrightarrow T$ is an immersion, $S, T\in Sub(X)$:
		\begin{align*}
			f^*(D^{\le}(T))\subset D^{\le}(S)& &f^!(D^{>}(T))\subset D^{>}(S)
		\end{align*}
\end{definition}

	We then have the following simple minded gluing:
	\begin{proposition}\label{gluing:mainresult}
		Assume the situation of \ref{gluing:spreadingOut} on a Noetherian scheme $X$. Then $D'(X)$ is a pseudo-abelian triangulated subcategory of $D(X)$ and the pair $(D^{\le}(X), D^{>}(X))$ forms an $m$-structure on $D'(X)$.
	\end{proposition}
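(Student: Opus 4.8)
The decomposition axiom for $(D^{\le}(X),D^{>}(X))$ on $D'(X)$ is built into the definition of $D'(X)$, so the plan is to verify the three remaining $m$-structure axioms for this pair and to extract the statement that $D'(X)$ is a pseudo-abelian triangulated subcategory of $D_X$ along the way. The formal part is quick: each $(D^{\le}(k),D^{>}(k))$ being an $m$-structure, $D^{\le}(k)$ and $D^{>}(k)$ are thick (triangulated and closed under summands) subcategories of $D(k)$; since the functors $\epsilon^*$ and $\epsilon^!$ are triangulated and preserve direct summands, and $D^{\le}(X),D^{>}(X)$ are defined by pointwise membership, both are thick subcategories of $D_X$. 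This yields invariance, co-invariance and Karoubi-closedness simultaneously, as well as the inclusions $D^{\le}(X),D^{>}(X)\subseteq D'(X)$ (apply the definition to the triangles $a\xrightarrow{\id}a\to 0\to$ and $0\to a\xrightarrow{\id}a\to$, using $0\in D^{\le}(X)\cap D^{>}(X)$).

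The heart of the matter is orthogonality: $\hom(a,b)=0$ for $a\in D^{\le}(X)$ and $b\in D^{>}(X)$. I would prove this by induction on $\dim X$, the case $\dim X\le 0$ reducing to the pointwise orthogonality by topological invariance. For the inductive step, fix $f:a\to b$. Let $X_1,\dots,X_r$ be the irreducible components of $X_{red}$ and put $U_i:=X_i\setminus\bigcup_{j\ne i}X_j$; then $U:=\bigsqcup_i U_i$ is a dense open of $X$ on which each $U_i$ is clopen with generic point $\eta_i$, the generic point of $X_i$. Working on $U$ one has $\eta_i^!=\eta_i^*$ (the closure of $\eta_i$ in $U$ is the clopen $U_i$), so from $j_U^*b\in D^{>}(U)$ (the immersion compatibility recorded in \ref{gluing:spreadingOut}) we obtain $\eta_i^*b=\eta_i^!(j_U^*b)\in D^{>}(\kappa(\eta_i))$, while $\eta_i^*a\in D^{\le}(\kappa(\eta_i))$ directly from $a\in D^{\le}(X)$. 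Orthogonality of the given $m$-structure at $\kappa(\eta_i)$ then forces $\eta_i^*f=0$ for every $i$; by continuity a morphism vanishing at all generic points vanishes on a dense open, so $j_V^*f=0$ for some dense open $j_V:V\hookrightarrow X$ with closed complement $i_Z:Z\hookrightarrow X$, $\dim Z<\dim X$. The localization triangle $i_{Z*}i_Z^!b\to b\to j_{V*}j_V^*b\to$ together with $j_V^*f=0$ makes $f$ factor through a morphism $a\to i_{Z*}i_Z^!b$, i.e.\ through $\hom(i_Z^*a,i_Z^!b)$; since $i_Z^*a\in D^{\le}(Z)$ and $i_Z^!b\in D^{>}(Z)$ (again by \ref{gluing:spreadingOut}), the inductive hypothesis gives $\hom(i_Z^*a,i_Z^!b)=0$ and hence $f=0$.

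Given orthogonality, the decomposition triangles provided by the definition of $D'(X)$ are unique up to unique isomorphism and functorial, yielding additive truncation functors $w_{\le},w_{>}$ on $D'(X)$, and it remains only to see that $D'(X)$ is a triangulated subcategory closed under summands. Shift-stability is clear. For cones, a distinguished triangle $a\to a'\to a''\to$ with $a,a'\in D'(X)$ induces, via the octahedral axiom, a distinguished triangle $\mathrm{Cone}(w_{\le}a\to w_{\le}a')\to a''\to\mathrm{Cone}(w_{>}a\to w_{>}a')\to$ whose outer terms lie in the triangulated subcategories $D^{\le}(X)$ and $D^{>}(X)$, so $a''\in D'(X)$. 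For summands, if $a=a_1\oplus a_2\in D'(X)$ then the projector onto $a_1$ acts functorially on the whole triangle $w_{\le}a\to a\to w_{>}a\to$, and splitting this idempotent endomorphism of triangles — here one uses that $D_X$ is idempotent-complete, as in all cases of interest — exhibits $a_1$ inside a direct-summand triangle $(w_{\le}a)_1\to a_1\to(w_{>}a)_1\to$ with outer terms in $D^{\le}(X)$, $D^{>}(X)$ by Karoubi-closedness, so $a_1\in D'(X)$. I expect the orthogonality step to be the main obstacle: it carries all of the genuine $t$-structure / weight-structure content, and its delicate point is the d\'evissage in $\dim X$ — in particular the fact that $\epsilon^!$ may be replaced by $\epsilon^*$ at a generic point only after restricting to an open on which the ambient component is clopen, used together with the stability of $D^{\le}$ and $D^{>}$ under open and closed immersions recorded in \ref{gluing:spreadingOut}.
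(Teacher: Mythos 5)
Your proof is correct and follows essentially the same strategy as the paper: orthogonality by Noetherian/dimension induction combined with continuity at generic points and the localization triangle, and closure of $D'(X)$ under shifts, cones, and summands by lifting the relevant data to the truncation triangles (the paper via an explicit $3\times 3$ diagram with lifts $f_b,f_c$ and explicit projectors; you via the octahedral axiom and splitting idempotent endomorphisms of triangles, which are the same arguments stated more abstractly). The only other cosmetic difference is that the paper's Noetherian induction lets it get by with a single generic point, producing a proper closed complement $Z\subsetneq X$ regardless of reducibility, whereas you handle all irreducible components at once before restricting; both are valid.
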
		
	\begin{proof}
			(Orthogonality of $D^{\le}(U)$ and $D^{>}(U)$) Let $a\in D^{\le}(X)$ and $b\in D^{>}(X)$. We do a Noetherian induction on $X$. The base case is when $X=\Spec k$, in which case it follows since $D^{\le}(k)\perp D^{>}(k)$. Let $\eta:\Spec K\rightarrow X$ be a generic point of $X$. Then $\hom(\eta^*(a), \eta^*(b)) = 0$ by definition, using  $D^{\le}(K)\perp D^{>}(K)$ (notice that $\eta^!=\eta^*$ in this case, by definition). Let $f\in \hom(a,b)$, then since $\eta^*(f)=0$, by continuity there is a $j:U\subset X$ open, such that $j^*(f)=0$. Since $j^*j_* = \id$, it follows that the composite: $a\xrightarrow{f} b \rightarrow j_*j^*b$ vanishes. Hence $f$ factors through the third term in the triangle, $i_*i^!b$, where $i:Z=X-U\hookrightarrow X$ is the complement. But $\hom(a,i_*i^!b)=\hom(i^*a, i^!b)=0$ by induction hypothesis and we are done.

			($D^{\le}(U)$ and $D^{>}(U)$ are pseudo-abelian triangulated subcategories) Since $f^*$, $f^!$ commute with shifts, cones, and taking summands this is immediate. 
									
			($D'(X)$ is closed under shifts): Let $a\in D'(X)$. Then if $b\rightarrow a\rightarrow c\rightarrow $ is a decomposition with $b\in D^\le (X)$, $c\in D^{>}X$, the $b[m]\rightarrow a[m]\rightarrow c[m]\rightarrow $ implies that $a[m]\in D'(X)$ since $D^{\le}(U)$ and $D^{>}(U)$ are invariant under shifts.
			
			($D'(X)$ is closed under cones): Let $a,a'\in D'(X)$. Then $b\rightarrow a\rightarrow c\rightarrow $ and $b'\rightarrow a'\rightarrow c'\rightarrow $  is a decomposition with $b,b'\in D^\le (X)$, $c,c'\in D^{>}X$. Now for any morphism $f:a'\rightarrow a$, consider the diagram:
			\[\begin{tikzcd}
					b'	\ar[d,"f_b", dotted]\ar[r]	&a'	\ar[r]\ar[d,"f"]			&c'\ar[d,"f_c", dotted]\ar[r]	&\-\\
					b	\ar[r]\ar[d]				&a	\ar[r]\ar[d]				&c	\ar[r]\ar[d]		&\-\\
					Cone(f_b)\ar[r]\ar[d]			&Cone(f)	\ar[r]\ar[d]		&Cone(f_c)\ar[r]\ar[d]		&\-\\
					\-								&\-								&\-						&\-
			\end{tikzcd}\]
			where $f_b, f_c$ exist since $\hom(b',c)=0$. $Cone(f_b)\in D^\le (X)$, and $Cone (f_c)\in D^>(X)$ since this can be tested after applying $\epsilon^*$ (resp. $\epsilon^!$) for any point $\epsilon: \Spec k\hookrightarrow X$. It follows that $D'(X)$ is closed under taking cones. 
			
			($D'(X)$ is closed under summands)
				Suppose $a'\in D(X)$ is a summand of $a\in D'(X)$. There is a projector $p:a\rightarrow a$ s.t. $a'$ is an image of $a$. There is a triangle: 
		\[
			b\overset f\longrightarrow a \longrightarrow c \longrightarrow 
		\]
		 with $b\in D^{\le}(X)$ and $c\in D^{>}(X)$. Using orthogonality it is easy to see that $p$ gives rise to a projector $p_b:b\rightarrow b$. Let the kernel of $p_b$ be $b'$ and that for the projector $1-p_b,1-p$ be $b'',a''$ respectively. Let $f$ induce maps $f':b'\rightarrow a'$ and $f'':b''\rightarrow a''$. Then we get an induced morphism of distinguished triangles:
		 \[\begin{tikzcd}
			&b'\oplus b''\ar[r, "f'\oplus f''"]\ar[d]	&a'\oplus a''\ar[r]	\ar[d]	&c'\oplus c''\ar[r]\ar[d]	&\-		\\		 
		 	&b\ar[r,"f"] 								&a\ar[r]					&c\ar[r]					&\-					
		 \end{tikzcd}\]
		 where $c',c''$ are defined as cones of the morphism $f',f''$. Then first two vertical maps are isomorphisms, and hence so is the third. In particular $c'$ is a summand of $c$ and hence in $D^{>}(X)$. Since $b'$ is in $D^{\le}(X)$ and $b'\rightarrow a'\rightarrow c'\rightarrow$ is a distinguished triangle, we are done.
	\end{proof}

\begin{remark}
	In using the previous proposition, the hard task would be to determine $D'(X)$ explicitly. It does not seem likely that the categories $D'(U)$ would automatically satisfy the formalism of four functors. We will be completely bypassing the question of determining $D'(X)$ in our cases of interest and instead work with objects which will be known to be in $D'(X)$.
\end{remark}

\subsection{Motivic Sheaves}\label{sec:newmotives}

\begin{para} \label{intro:dmx}
Given any base scheme $S$, there exists a rigid tensor triangulated category of motivic sheaves $DM(S)$ with unit object denoted $1_S$, Tate twists denoted $A\mapsto A(r)$ and such that the formalism of Grothendieck's six functors holds. 

One choice for such a construction is the category of motivic sheaves without transfers as constructed by Ayoub in \cite{ayoub_thesis_1, ayoub_thesis_2}. This is the category $\mathbb{SH}_\mathfrak{M}^T(S)$ of \cite[4.5.21]{ayoub_thesis_2} with $\mathfrak{M}$ being the complex of $\Q$-vector spaces (and one works with the topology \'etale topology), also denoted as $DA(S)$ in the discussion \cite[2.1]{ayoub2012relative}. To play well with the realization functors and continuity, we will instead restrict attention to the subcategory of compact objects in $DA(S)$, which are also stable under the Grothendieck's four functors. The second choice for the construction is the compact objects in the category of motivic sheaves with transfers, the Beilinson motives $DM_{B,c}(S)$ as described in the article \cite{cisinski2012triangulated}. Again, the objects in $DM_{B,c}(S)$ are compact by construction and play well with realizations.

We refer the reader to \cite[\S 2.6]{vaish2017weight} for a summary of properties of the category (except about realizations) which will be used here.
\end{para}
\begin{para}
Over a field $S=\Spec k$, the construction of $DM(\Spec k)$ is due to Voevodsky \cite{voevodsky}, who also shows that there are natural functors:
	\begin{equation*}
			(SmProj/k)^{op}\overset {h_k} \longrightarrow CHM(k)\hookrightarrow	DM(\Spec k)
	\end{equation*}
	where $SmProj$ denotes the category of smooth projective varieties over $\Spec k$, $CHM(k)$ denotes the category of Chow motives over the field $k$, and the last functor is fully faithful. This can be extended to give a functor:
	\[
		h_k: (Sch/k)^{op} \rightarrow DM(\Spec k)
	\]
	More generally, due to work of Corti-Hanamura \cite{cortiHanamura}, there is a category of Chow motives $CHM(S)$ over any regular base $S$. Due to work of Hebert \cite{hebert2011structure} and Bondarko \cite{bondarko2014weights} there is a weight structure $(DM_{w\le 0}(S), DM_{w>0}(S))$ on $DM(S)$ and due to work of Fangzhou \cite{fangzhou2016borel}, it's heart can be identified with the category of Chow motives. Therefore we have functors
	\begin{equation*}
		(SmProp/S)^{op}\overset {h_S} \longrightarrow CHM(S)	\hookrightarrow DM(S) \text{ with essential image as heart }DM_{w\le 0}(S)\cap DM_{w\ge 0}(S)
	\end{equation*}
	where $SmProj/S$ is the category of proper schemes over $S$ which are smooth, and the last functor is fully faithful. This can be extended to give a functor:
	\[
		h_S: (Sch/S)^{op} \rightarrow DM(S)	\text{ with }(\pi:X\rightarrow S)\mapsto \pi_*1_X
	\]
\end{para}

\begin{para}
	As a matter of notation, we will often identify the objects in $CHM(S)$ and $DM(S)$ via the above fully faithful embedding. We will often write $h$ for $h_S$ when the base scheme is clear from the context.
\end{para}

\begin{para}[$l$-adic realization and weight filtrations]\label{discuss:realizations} %VVTODO references, particularly for Ayoub
%%	Let an embedding $k\subset \C$ be fixed. Then there are monoidal realization functors
%%	\[
%%		r_{B}: DM(S) \rightarrow D^b_c(S)
%%	\]
%%	where the latter is the bounded constructible derived category of sheaves over $S(\C)$ (we can restrict to the bounded category, since we are assuming $DM(S)$ to consist of compact objects). The realization functor commutes with Grothendieck's four functors.  
%%	
%%	Alternatively, 
	Let $k$ be of characteristic $0$ and $l$ any prime, then we have  monoidal realization functors (at least for $DM(S) = DM_{B,c}(S)$, see \cite[\S 7.2]{cisinski2016etale})
	\[
		r_{l, S}:DM(S) \rightarrow D^b_c(S) 
	\]
	whose target is the (bounded, derived) category of costructible $\Q_l$ sheaves due to \cite{ekedahlAdic}. The realizations commute with Grothendieck's four functors (\cite[7.2.24]{cisinski2016etale}). 
	
	We will denote the realization functors (for any $l$) by $r$. The target category $D^b_c(S)$ admits a perverse $t$-structure and this gives rise to a homological realization functor:
	\[
		^pH^*r = ({^pH^i}\circ r)_{i\in \Z}: DM(S) \rightarrow Gr_\Z Perv(S)
	\] 
	where $Perv(S)$ denotes the heart of the perverse $t$-structure. 
%%%%\end{para}
%%%%
%%%%\begin{para}[Weight Filtration]
	The target category for the realization functor, $D^b_c(S)$ does not admit any natural weight structure. However, if $k$ is finitely generated over $\Q$ and we are looking at the $l$-adic realization, each object (in the image of the realization) does admit a weight filtration (\cite[2.5.1]{bondarkoMixedMotivicSheaves}). 
	
	In any case, due to \cite[2.1.2]{bondarko_weights}, there is a canonical weight filtration on $^pH^i(r(A))$ induced by taking any choice of Bondarko's weight truncations in $DM(S)$, and this coincides with the above weight filtration if $k$ is finitely generated over $\Q$. 
	
	In particular, for any morphism $f:A\rightarrow A'$ in $DM(S)$ the realization $^pH^i(r(f))$ is strict for the corresponding weight filtrations (since $S$ is assumed to be of finite type over $k$ it is very reasonable in the sense of \cite[2.1.1]{bondarkoMixedMotivicSheaves}, and then we use \cite[2.5.4(II)(1) and 1.3.2(II)(2)]{bondarkoMixedMotivicSheaves}).
\end{para}
	
	This allows us to make the following definition, motivated by \cite[\S 3]{morelThesis}:

\begin{definition}\label{def:weightTruncationInRealization}
	In below, assume that $A\in D^b_c(S)$ and $^pH^i(A)$ has a weight filtration by assuming that $A$ is in the image of the realization functor $r$. Then, for any $n\in \Z$ define the subcategories:
	\begin{align*}
		^wD^{\le n}&:=\{A\in D^b_c(S)\big| A=r(A'), {^pH^j}(A)\text{ has weights}\le n\} \\ {^wD^{> n}}&:=\{A\in D^b_c(S)\big| A=r(A'), {^pH^j}(A)\text{ has weights}> n\}
	\end{align*}
	The definitions here follow the definitions in \cite[\S 3]{morelThesis}, however note that $D^b_c(S)$ is not mixed, and hence this does not form a $t$-structure on $D^b_c(S)$.
\end{definition}

\subsection{Chow-Kunneth decomposition for Abelian varieties}\label{sec:chowForAb}
We recall the Chow-Kunneth decomposition of Abelian varieties over a base field $k$, or more generally, over a base scheme $S$. A priori the decomposition is constructed in $CHM(S)$ (or rather, as in \cite{deninger1991motivic}, in a full subcategory of the same), but this can be thought of as a decomposition in $DM(S)$ using the embedding $CHM(S)\hookrightarrow DM(S)$. 

We have the following main result:

\begin{proposition}
	Let $\pi:A\rightarrow S$ be an abelian scheme of relative dimension $d$ over a regular base $S$. Then we have the following canonical decomposition (Chow-Kunneth decomposition) of $h(A)$ in $CHM(S)\hookrightarrow DM(S)$:
	\begin{align*}
		h_S(A) = \bigoplus_{0\le i\le 2d} h^i_S(A)
	\end{align*}
	such that there are natural identifications (Poincare duality)
	\begin{align*}
		h^{d-i}_S(A)(-i)[-2i] = h^{d+i}_S(A) \text{ whenever } 0\le i\le d.
	\end{align*}
	Here $h^i_S(A)$ can be characterized as the part of the motive $h_S(A)$ where $[\times n]^*$ acts by multiplication by $n^i$ (where $[\times n]$ is the power $n$ operation on the abelian group scheme $A/S$). In particular, if $f:T\subset S$ is regular of codimension $c$, we have a natural identification in $DM(T)$:
	\begin{align*}
		f^*(h_S^i(A)) = h_T^i(A|_T)	&	& f^!(h_S^i(A)) = f^*(h_S^i(A)) (-c)[-2c] = h_T^i(A|_T)(-c)[-2c]
	\end{align*}
\end{proposition}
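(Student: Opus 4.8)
The plan is to deduce the statement from the relative Chow--K\"unneth decomposition of Deninger--Murre, pushed through the six functor formalism, in three steps. \emph{Step 1 (the decomposition).} Fix an integer $n\ge 2$; since $k$ has characteristic $0$, $[\times n]\colon A\to A$ is finite \'etale and induces $[\times n]^*$ on $h_S(A)$ in $CHM(S)$ (available over the regular base $S$ by \cite{cortiHanamura}). By Deninger--Murre \cite{deninger1991motivic} applied to the abelian scheme $A/S$, the identity $\prod_{i=0}^{2d}\bigl([\times n]^*-n^{i}\,\id\bigr)=0$ holds, so the Lagrange idempotents $p_i=\prod_{j\ne i}([\times n]^*-n^{j}\,\id)/(n^{i}-n^{j})$ are mutually orthogonal projectors with $\sum_i p_i=\id$, and I would set $h^i_S(A):=\image(p_i)$. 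I would then check that $p_i$ does not depend on $n$ --- directly as in loc.\ cit., or by applying a realization functor $r$, which is conservative on $\vvspan{h_S(A)}$, and comparing with the classical K\"unneth projectors on $\ell$-adic cohomology; this also gives $h^i_S(A)=0$ outside $0\le i\le 2d$, makes ``the part where $[\times n]^*$ acts by $n^i$'' a legitimate intrinsic description, and identifies $h^0_S(A)=1_S$, $h^{2d}_S(A)=1_S(-d)[-2d]$ canonically.

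\emph{Step 2 (Poincar\'e duality).} Since $\pi$ is smooth and proper of relative dimension $d$, $h_S(A)=\pi_*1_A=\pi_!1_A$ is dualizable with $h_S(A)^\vee=\pi_*\pi^!1_S=\pi_*1_A(d)[2d]=h_S(A)(d)[2d]$, and the cup product $h^i_S(A)\otimes h^{2d-i}_S(A)\to h^{2d}_S(A)=1_S(-d)[-2d]$ is perfect (equivalently $h^i_S(A)$ is the $i$-th exterior power of the odd object $h^1_S(A)$, whose top exterior power is $1_S(-d)[-2d]$), giving a canonical $(h^i_S(A))^\vee\cong h^{2d-i}_S(A)(d)[2d]$. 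For the displayed form $h^{d-i}_S(A)(-i)[-2i]=h^{d+i}_S(A)$, $0\le i\le d$, I would invoke relative hard Lefschetz for $A/S$: a relative polarization --- always present in our applications, where $A$ is a (universal) abelian scheme over a Shimura variety carrying its polarization --- yields a Lefschetz operator whose $i$-th power is an isomorphism $h^{d-i}_S(A)\xrightarrow{\ \sim\ }h^{d+i}_S(A)(i)[2i]$.

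\emph{Step 3 (compatibility with $f^*$, $f^!$), and the main obstacle.} For a closed immersion $f\colon T\hookrightarrow S$ with $S,T$ regular of codimension $c$ (hence $f$ a regular immersion), proper base change gives $f^*h_S(A)=h_T(A|_T)$ compatibly with $[\times n]^*$ (as $[\times n]$ on $A$ restricts to $[\times n]$ on $A|_T$), so $f^*p_i$ is the corresponding projector downstairs and $f^*h^i_S(A)=h^i_T(A|_T)$; and since $h^i_S(A)$ is dualizable, absolute purity $f^!1_S\cong 1_T(-c)[-2c]$ together with the rigidity formula $f^!M\cong f^*M\otimes f^!1_S$ gives $f^!h^i_S(A)\cong f^*h^i_S(A)(-c)[-2c]=h^i_T(A|_T)(-c)[-2c]$. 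The genuinely substantive ingredient is Deninger--Murre's construction of the relative K\"unneth projectors and their $n$-independence, which I would simply cite; on my side the delicate points are the perfectness of the cup product pairing / hard Lefschetz in Step 2 --- where the relative polarization enters, and without which the identification is not canonical --- and absolute purity in Step 3, which is exactly where regularity of $S$ and $T$ is used. I expect Step 2 to be the one most easily mis-stated, so I would take care to separate its two ingredients: the self-duality of $h_S(A)$, which is automatic, and hard Lefschetz, which is not.
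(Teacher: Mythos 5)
Your proposal matches the paper's proof, which is equally terse: cite \cite[3.1]{deninger1991motivic} for the idempotents and their $[\times n]^*$-characterization, the remark following \cite[3.2]{deninger1991motivic} for the self-duality, and compatibility of $[\times n]$ with base change together with purity for the $f^*$, $f^!$ formulas. Your unpacking of Step 2 is accurate and flags the right subtlety --- the displayed identification is hard Lefschetz rather than bare Poincar\'e duality and rests on a relative polarization (equivalently, an isogeny $A\to\hat A$, which suffices with $\Q$-coefficients), a point the paper elides by deferring entirely to Deninger--Murre; but since only the existence of some isomorphism $h^{d+i}_S(A)\cong h^{d-i}_S(A)(-i)[-2i]$ is used downstream (to cut the range $0\le l\le 4$ to $0\le l\le 2$ in the proof of Theorem 3.1.3), this causes no trouble.
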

\begin{proof}
	See \cite[3.1]{deninger1991motivic} for definition of $h^i_S(A)$ and it's  characterization via action of $[\times n]^*$. Poincare duality follows from the last remark following \cite[3.2]{deninger1991motivic}.
	
	Now $f^*(h^i_S(A)) = h^i_T(A|_T)$ follows since the power $n$ operation on $A$ restricts to the same on $A|_T$. Furthermore $f^!(h_S^i(A)) = f^*(h_S^i(A))(-c)[-2c]$ by purity. 
\end{proof}

\begin{definition}
	Let $\pi:A\rightarrow S$ be an abelian scheme of relative dimension $d$. For $i<0$ or $i>2d$ it would be convenient to define $h^i_S(A)=0$. Then for any $i\in \Z$ we can define:
	\begin{align*}
		h_S^{\le j}(A) := \oplus_{i\le j}h_S^i(A)& &h_S^{> j}(A) := \oplus_{i> j}h_S^i(A)
	\end{align*}
	as sub-objects of $h_S(A)$.
\end{definition}

\begin{remark}\label{h:oldisnew}
	Objects $h^i_k(X)$ for $i=0,1$ can be defined for arbitrary smooth projective varieties over $k$ and for $i=2, 3, 4$ for any smooth projective surface over $k$ as in \cite[\S 2.4]{vaish2017weight}; by \cite[5.3]{scholl1994classical} it is equivalent to the definitions above.
\end{remark}

Finally we recall the following result from \cite{wildeshaus_conservativity}:
\begin{proposition}[Conservativity]\label{conservativityonab}
	Let $S$ be regular of finite type over $k$. Then the restriction of the realization functor to the category 
	\[
		DM^{Ab}(S) := \vvspan{h_S(X)(j)\big| X/S\text{ an abelian scheme}, j\in \Z}
	\]
	is conservative.
\end{proposition}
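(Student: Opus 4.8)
The plan is to establish conservativity in its equivalent form: if $M\in DM^{Ab}(S)$ has $r(M)=0$, then $M=0$. First I would reduce to the case $S=\Spec F$ with $F$ an algebraically closed field. A nonzero object of $DM(S)$ has nonzero pullback $\epsilon^{*}M$ to some residue field of $S$ (by continuity and Noetherian induction), and that in turn has nonzero image under base change to the algebraic closure, since base change along a field extension is conservative (it is, after continuity, a filtered colimit of pullbacks along finite \'etale maps, along each of which an object is a direct summand of the pushforward of its pullback, thanks to $\Q$-coefficients). Base change identifies $\epsilon^{*}h_{S}(X)$ with the motive of an abelian variety, so $DM^{Ab}$ is preserved, and $r$ commutes with all these functors (\ref{discuss:realizations}). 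Over an algebraically closed $F$ the target $D^{b}_{c}(F)$ is moreover equivalent, via total cohomology $H^{*}$, to the category of bounded graded $\Q_{\ell}$-vector spaces.

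Over such an $F$, $DM^{Ab}(F)$ is the idempotent-complete triangulated subcategory of $DM(F)$ generated by the motives of abelian varieties and their Tate twists. This generating set is \emph{negative}, lying inside the heart $CHM(F)$ of the weight structure on $DM(F)$, so Bondarko's theory equips $DM^{Ab}(F)$ with a bounded weight structure whose heart $CHM^{Ab}(F)$ is the Karoubi closure of that set, and $r$ is compatible with weights, since the realization of a Chow motive is pure (\ref{discuss:realizations}).

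The heart of the matter is conservativity on $CHM^{Ab}(F)$, where the hypothesis of abelian type is genuinely used. Every object of $CHM^{Ab}(F)$ is finite-dimensional in the Kimura--O'Sullivan sense: the motive of an abelian variety is, by Kimura's theorem, and finite-dimensionality is inherited by Tate twists, direct sums and summands. For such motives $\ell$-adic homological equivalence coincides with numerical equivalence (Lieberman--Kleiman), so $r$ kills exactly the ideal $\mathcal N$ of numerically trivial morphisms and factors through a faithful functor on $CHM^{Ab}(F)/\mathcal N$, which by Jannsen is a semisimple $\Q$-linear abelian category. A faithful additive functor out of a semisimple abelian category is exact, hence reflects isomorphisms; so if $r(f)$ is an isomorphism then so is the image $\bar f$ of $f$ in $CHM^{Ab}(F)/\mathcal N$. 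Since $\mathcal N$ restricts to a nilpotent ideal of the endomorphism ring of any finite-dimensional motive (again Kimura), lifting an inverse of $\bar f$ produces a two-sided inverse of $f$ modulo a nilpotent, so $f$ is an isomorphism; applying this to $0\to N$ gives $r(N)=0\Rightarrow N=0$ on the heart.

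Finally I would bootstrap to all of $DM^{Ab}(F)$ by a standard argument with the weight structure. Bondarko's weight complex functor $t:DM^{Ab}(F)\to K^{b}(CHM^{Ab}(F))$ is conservative for the bounded weight structure at hand and is compatible with the weight-filtered realization; since the endomorphism rings of $CHM^{Ab}(F)$ are semiperfect with Jacobson radical $\mathcal N$, the complex $t(M)$ has a minimal model whose differentials all lie in $\mathcal N=\ker(r|_{\mathrm{heart}})$ and hence realize to zero. The vanishing of $r(M)$ then propagates through the (degenerate) weight spectral sequence of the realization to kill every term of this minimal model, so $t(M)$ is contractible and $M=0$. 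The decisive and essentially only non-formal step is the conservativity on the heart, resting on the three unconditional inputs quoted above — Kimura finite-dimensionality, the Lieberman--Kleiman theorem, and Jannsen's semisimplicity.
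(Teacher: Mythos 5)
The paper itself does not prove this from scratch: it simply observes that $DM^{Ab}(S)$ is contained in Wildeshaus's category of motives of abelian type (for the trivial stratification) and then invokes \cite[Theorem~3.3(b)]{wildeshaus_conservativity}. Your argument is, in effect, an unpacking of the proof behind that citation, and the skeleton is the right one: reduce to a field, equip $DM^{Ab}$ with the Chow weight structure, prove conservativity on the heart via Kimura finite-dimensionality, Lieberman's homological $=$ numerical for abelian varieties, and Jannsen semisimplicity (so that $\ker r = \mathcal N$ is locally nilpotent and $r$ descends to a faithful functor on a semisimple quotient), and then bootstrap along the weight filtration. These are exactly the inputs the cited theorem rests on, so you should view your proposal as making the paper's one-line citation self-contained rather than as an alternative route.

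Two points deserve more care. First, the bootstrap step is stated too loosely: conservativity of the weight complex functor on the heart does not by itself propagate, because from $r(M)=0$ one cannot directly conclude that the individual weight-graded pieces realize to zero — you need the weight spectral sequence to degenerate, and for that you need honest weights on the realization. But you chose to push all the way to an algebraically closed field $F$, where $D^b_c(\Spec F,\Q_\ell)$ is just graded $\Q_\ell$-vector spaces and carries no Frobenius or Hodge weight filtration; so ``compatible with the weight-filtered realization'' and ``degenerate weight spectral sequence'' have no direct meaning there. The fix is cheap — stop the reduction at a finitely generated field (where Deligne weights are available), or descend the minimal weight complex to a finitely generated model before arguing degeneration, or switch to the Betti/Hodge realization when $k\hookrightarrow\C$ — but as written the step has a gap. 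Note also that the ambient $k$ in the paper is an arbitrary field of characteristic $0$, so residue fields of $S$ need not be finitely generated over $\Q$; this is why the spreading-out to a finitely generated model is genuinely needed rather than automatic. Second, the appeal to semiperfectness of the endomorphism rings (hence existence and uniqueness of minimal models) is correct but should be justified: $\mathrm{End}(M)/\mathcal N(M)$ is finite-dimensional semisimple by Jannsen and $\mathcal N(M)$ is nilpotent by Kimura, so idempotents lift and $J=\mathcal N$. With these points tightened, the argument is sound and matches the standard proof that the paper delegates to Wildeshaus.
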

\begin{proof}
	By \cite[Definition 2.5(b)]{wildeshaus_conservativity}, this is a subcategory of the category of Abelian motives over $S$ for trivial stratification (denoted there as $DM^{Ab}_{B,c}(S)_\Q$). Then \cite[Theorem 3.3(b)]{wildeshaus_conservativity} tells us that the realization functor on $DM^{Ab}(S)$ is conservative.
\end{proof}

\subsection{(Motivic) Intersection complex}\label{sec:motivic_ic}
\begin{para}[Motivic Intersection complex upto radical]\label{def:motivicIC}
	Let $X$ be an arbitrary variety over $k$, and $j:U\hookrightarrow X$ dense immersion with $U$ open regular. For any object $N\in DM(U)$ such that $N$ is of weight $0$ (in the sense of Hebert \cite{hebert2011structure} or Bondarko \cite{bondarko2014weights}), consider the extension $j_{!*}N\in DM(X)$, as defined in \cite[Definition 2.10]{wildeshaus_shimura_2012} (which is a slightly weaker notion than an unconditional definition in \cite{wildeshaus_ic} but exists unconditionally in more cases, for example, the Baily-Borel compactification of an arbitrary Shimura variety). The main interest in the motivic intersection complex comes from the fact that its realization corresponds to the ordinary intersection complex.
 
 	The object $j_{!*}N$ (called as the \emph{middle extension} of the motive $N$ or the \emph{motivic intersection complex for the motive $N$}) is unique up to isomorphism, and we will denote $j_{!*}1_U$ by $IM_X$ (referred to as \emph{the motivic intersection complex}). It is shown in \cite[Theorem 0.1]{wildeshaus_shimura_2012} that we have the following properties:
 	\begin{enumerate}
		\item There is no summand of $j_{!*}N$ of the form $i_*L_Z$ with $L_Z$ of weight $0$ in $DM(Z)$, where $i:Z\hookrightarrow X$ denotes the closed complement of $U$.
		\item There is an isomorphism $j^*j_{!*}N \iso N$. The induced map $End(j_{!*}N)\rightarrow End(N)$ has a nilpotent kernel 
		\item $(1)$ and $(2)$ characterize $j_{!*}N$ up to (a non-unique) isomorphism.
%		\item If $f$ is an idempotent endomorphism of $N$, it extends to an idempotent endomorphism of $j_{!*}N$.
		\item The following analogue of the decomposition theorem holds: let $M \in DM(X)$ be of weight $0$ such that $j^*M \iso N$ for some smooth open dense $j:U\hookrightarrow X$. Then (if $j_{!*}N$ exists) there is a non-canonical isomorphism:
	\[
		M\overset \iso \longrightarrow j_{!*}N \oplus i_* L_Z
	\]		
	where $i:Z\hookrightarrow X$ is a proper closed immersion and $L_Z\in DM(Z)$ is of weight $0$.
	\end{enumerate}
\end{para}

We have the following simple strengthening of $(4)$ which will be useful:
\begin{lemma}\label{ICissummandofchow}
	Let $j:U\hookrightarrow X$ denote be an open immersion. Let $N$ be of weight $0$ in $DM(U)$ and assume that the motivic intersection complex $j_{!*}N$ exists. Assume that $M$ is of weight $0$ in $DM(X)$ such that $N$ is a summand of $j^*M$. Then $j_{!*}N$ is a summand of $M$.
\end{lemma}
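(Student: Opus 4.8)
The plan is to realize $j_{!*}N$ as a retract of $M$ by producing maps $a\colon j_{!*}N\to M$ and $b\colon M\to j_{!*}N$ whose composite $b\circ a$ becomes $\id_{j_{!*}N}$ after applying $j^*$; property $(2)$ of \ref{def:motivicIC} then promotes this to a genuine splitting, since it asserts that $\ker\big(End(j_{!*}N)\to End(N)\big)$ is a nil ideal, so any endomorphism of $j_{!*}N$ mapping to $\id_N$ is automatically invertible.

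First I would fix, using that $N$ is a summand of $j^*M$, morphisms $\iota\colon N\to j^*M$ and $\pi\colon j^*M\to N$ with $\pi\circ\iota=\id_N$, and transport them across the adjunctions $j_!\dashv j^*$ and $j^*\dashv j_*$ to morphisms $\tilde\iota\colon j_!N\to M$ and $\tilde\pi\colon M\to j_*N$. Next I would invoke the construction of $j_{!*}N$ in \cite{wildeshaus_shimura_2012} (following \cite{morelThesis}): it comes with a triangle $j_{!*}N\to j_*N\to C\to$ realizing $j_{!*}N$ as the minimal weight truncation $w_{\le 0}j_*N$, so that $C\in DM_{w\ge 1}(X)$, and dually — the middle extension commuting with Verdier duality — with a triangle $K\to j_!N\to j_{!*}N\to$ realizing $j_{!*}N$ as $w_{\ge 0}j_!N$, so that $K\in DM_{w\le -1}(X)$; moreover, under the canonical identifications $j^*j_!N=j^*j_{!*}N=j^*j_*N=N$ the maps $j_!N\to j_{!*}N$ and $j_{!*}N\to j_*N$ both become $\id_N$. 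Since $M$ has weight $0$, orthogonality for the weight structure gives $\Hom(K,M)=0$ and $\Hom(M,C)=0$, so $\tilde\iota$ factors as $j_!N\to j_{!*}N\xrightarrow{a}M$ and $\tilde\pi$ factors as $M\xrightarrow{b}j_{!*}N\to j_*N$.

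Then I would compute $j^*a$ and $j^*b$: the triangle identities for the two adjunctions, together with $j^*j_!=j^*j_*=\id$ for the open immersion $j$, give $j^*\tilde\iota=\iota$ and $j^*\tilde\pi=\pi$, and since $j^*$ sends $j_!N\to j_{!*}N$ and $j_{!*}N\to j_*N$ to $\id_N$ this forces $j^*a=\iota$ and $j^*b=\pi$. Hence $j^*(b\circ a)=\pi\circ\iota=\id_N=j^*(\id_{j_{!*}N})$, so $\id_{j_{!*}N}-b\circ a$ lies in the nil ideal $\ker\big(End(j_{!*}N)\to End(N)\big)$; therefore $b\circ a$ is invertible in $End(j_{!*}N)$ and $(b\circ a)^{-1}\circ b$ retracts $a$. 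Thus $j_{!*}N$ is a direct summand of $M$. (Note that the complementary summand of $N$ in $j^*M$ never enters, which is what makes this a strengthening of \ref{def:motivicIC}(4).)

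The only non-formal ingredients are the two weight-truncation descriptions of $j_{!*}N$ (as $w_{\le 0}j_*N$ and as $w_{\ge 0}j_!N$) and the compatibility of the resulting structural maps with $j^*$; these are exactly what the minimal-weight-truncation construction of \cite{wildeshaus_shimura_2012} and its behaviour under duality provide, and I expect the bookkeeping of these compatibilities — rather than any genuine difficulty — to be the main point requiring care, everything else being formal manipulation of adjunction (co)units and of the nil ideal of \ref{def:motivicIC}.
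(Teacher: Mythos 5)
Your overall strategy coincides with the paper's: produce $a\colon j_{!*}N\to M$ and $b\colon M\to j_{!*}N$ with $j^*(b\circ a)=\id_N$, then use that $1-b\circ a$ lies in the nilpotent kernel of $End(j_{!*}N)\to End(N)$ to make $b\circ a$ invertible (your $(b\circ a)^{-1}\circ b$ is the same as the paper's $s\circ\bar\beta$ with $s$ defined by $(1-r)^n = 1-sr$). The difference is entirely in how the lifts $a$, $b$ are obtained. The paper simply cites fullness of $j^*$ on the heart $DM(X)_{w=0}$ of the weight structure (\cite[1.7]{wildeshaus_ic}); since $j_{!*}N$ and $M$ both have weight $0$ and $j^*j_{!*}N\iso N$, this gives $\bar\alpha$, $\bar\beta$ immediately. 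You instead route through the adjunctions $j_!\dashv j^*\dashv j_*$ and the two weight--truncation triangles, which is in effect an attempt to reprove the special case of that fullness statement that you need.

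The gap in your route is the second triangle. The description of $j_{!*}N$ as $w_{\le 0}j_*N$ is Wildeshaus's construction, but the dual description as $w_{\ge 0}j_!N$ with fibre $K\in DM_{w\le -1}(X)$ is a genuine additional theorem, not a formal consequence of the conditions (1)--(4) of \ref{def:motivicIC}, and you only gesture at it via ``the middle extension commuting with Verdier duality'' --- which is itself something one has to prove for this motivic $j_{!*}$. Without it, the only bound the triangle $K\to j_!N\to j_{!*}N\to$ gives (from $j_!N$, $j_{!*}N\in DM_{w\le 0}$) is $K\in DM_{w\le 0}$, which does not yield $\Hom(K,M)=0$ for $M$ of weight $0$; you need $K\in DM_{w\le -1}$. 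So the factorization of $\tilde\iota$ through $j_{!*}N$ is not established. (The factorization of $\tilde\pi$ through $j_{!*}N$ via $\Hom(M,C)=0$ is fine, since $C\in DM_{w\ge 1}$ is part of the construction.) If you supply the duality statement, your proof is correct; but it is simpler to do what the paper does and quote fullness of $j^*$ on Chow motives, which handles both lifts at once and makes the second triangle unnecessary. The closing remark about the complementary summand of $N$ never entering applies equally to the paper's argument, so it does not distinguish the two.
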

\begin{proof}
	By \cite[1.7]{wildeshaus_ic} the functor $j^*$ is full on $DM(X)_{w=0}$. Since $N$ is a summand of $j^*M$, it follows that there are morphisms
	$
		id:N\overset\alpha\longrightarrow j^*M\overset\beta\longrightarrow N
	$
	and in turn they lift to morphisms:
	$
		r:j_{!*}N\overset{\bar\alpha}\longrightarrow M\overset{\bar\beta}\longrightarrow j_{!*}N
	$
	where the composite $r = \bar\beta\bar\alpha$ with $1-r \in K:=Ker(End(j_{!*}N)\rightarrow End(N))$ is nilpotent. 
	
	Hence $(1-r)^n = 0$ for some $n$, and defining $s$ by the relation $(1-r)^n = 1 -sr$ we see that $sr=1$. In particular the composite $j_{!*}N\overset{\bar\alpha}\longrightarrow M\overset{s\circ\bar\beta}\longrightarrow j_{!*}N$ is identity, and expresses $j_{!*}N$ as a summand of $M$.
\end{proof}

\begin{para}[Motivic intersection complex]\label{richerIC}
	We can replace condition $(2)$ by the stronger condition of \cite{wildeshaus_ic}:
	\begin{itemize}
		\item[(2$'$)] There is an isomorphism $j^*j_{!*}N \iso N$. The induced map $End(j_{!*}N)\rightarrow End(N)$ is injective.
	\end{itemize}
	In fact, we can work with a seemingly stronger condition
	\begin{itemize}
		\item[(2$''$)] There is an isomorphism $j^*j_{!*}N \iso N$. The natural map $End(j_{!*}N)\rightarrow End(N)$ admits a section. 
	\end{itemize}
	but which is equivalent since, the functor $j^*$ is full on Chow motives (\cite[1.7]{wildeshaus_ic}). Thus in this case, the intermediate extension is well defined up to a \emph{unique} isomorphism. 
	
	In this article, our primary motivation is to construct the intermediate extension for this stronger condition.
\end{para}

%%\begin{notation}
%%
%%\end{notation}

Finally, we recall certain facts about weights on the usual intersection complex in sheaves:

\begin{proposition}[Weights on $IC_X(\mathcal L)$]\label{weightsOnIC:sheaves}
	Let $X$ be an irreducible scheme over $k$. Let $j:U\hookrightarrow X$ be a dense immersion with $U$ open regular. Let $N\in DM(U)$ be pure of weight $N$, that is $N\in CHM(U)[n]$. Further assume that the realization $r(N)\in D^b_c(U)$ is a local system. Assume that $j_{!*}N \in DM(X)$ as in \ref{def:motivicIC} exists, and assume that $IC_X(\mathcal L) = r(j_{!*}N)$ where $IC_X(\mathcal L)\in D^b_c(S)$ denotes the intersection complex with coefficients in $\mathcal L$. 
	
	Then for any $i:Z\hookrightarrow X$ closed immersion with $Z$ not the same as $X$, 
	\begin{align*}
		i^*IC_X(\mathcal L)\in {^wD^{\le n+\dim X-1}}(Z)& &i^!IC_X(\mathcal L)\in {^wD^{\ge n+\dim X+1}}(Z).
	\end{align*}
\end{proposition}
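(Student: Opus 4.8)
The plan is to deduce both assertions from the standard structure theory of the intersection complex in $D^{b}_{c}$, keeping careful track of weights so as to match the normalisations of \ref{def:weightTruncationInRealization}. Throughout write $d:=\dim X$ and $IC:=IC_{X}(\mathcal L)$. Since $r(N)=\mathcal L$ with $N$ pure of weight $n$, the local system $\mathcal L$ is pure of weight $n$ and $IC=j_{!*}(\mathcal L[d])$ is a \emph{pure perverse sheaf of weight $w:=n+d$} for the canonical weight filtration on perverse cohomology sheaves of \ref{discuss:realizations}; moreover, for any immersion $i$ we have $i^{*}IC=r(i^{*}j_{!*}N)$ and $i^{!}IC=r(i^{!}j_{!*}N)$, so $i^{*}IC$ and $i^{!}IC$ lie in the image of $r$ and the subcategories ${}^{w}D^{\le\cdot}$, ${}^{w}D^{\ge\cdot}$ make sense for them.

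First I would isolate two inputs. \emph{(a) Purity and stability.} By purity of the intermediate extension of a pure perverse sheaf (\cite[5.4.1, 5.4.3]{BBD}, with the canonical filtration of \ref{discuss:realizations} in place of Deligne's where the two differ), $IC$ is pure of weight $w$; hence by stability of weights under the six functors (\cite[5.1.14]{BBD}), for any closed immersion $i:Z\hookrightarrow X$, $i^{*}IC$ has weight $\le w$ and $i^{!}IC$ has weight $\ge w$, i.e.\ ${}^{p}H^{q}(i^{*}IC)$ has weights $\le w+q$ and ${}^{p}H^{q}(i^{!}IC)$ has weights $\ge w+q$ for every $q$. \emph{(b) Support and cosupport.} By the conditions characterising the intermediate extension (\cite[1.4.22--1.4.24]{BBD}), writing $i_{0}:X\smallsetminus U\hookrightarrow X$ for the open--closed complement, $i_{0}^{*}IC\in{}^{p}D^{\le-1}(X\smallsetminus U)$ and $i_{0}^{!}IC\in{}^{p}D^{\ge 1}(X\smallsetminus U)$.

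Next I would extend (b) to an arbitrary closed $i:Z\hookrightarrow X$ with $Z\neq X$. Since $X$ is irreducible, $Z\cap U$ is a proper closed subset of the irreducible scheme $U$, so $\dim(Z\cap U)\le d-1$. Let $a:Z\cap U\hookrightarrow Z$ be the open immersion and $b:Z\smallsetminus U\hookrightarrow Z$ its closed complement. Then $a^{*}i^{*}IC=IC|_{Z\cap U}=(\mathcal L|_{Z\cap U})[d]$ is a single sheaf in cohomological degree $-d$ supported in dimension $\le d-1$, hence lies in ${}^{p}D^{\le-1}$; and $b^{*}i^{*}IC$ is the pullback of $i_{0}^{*}IC\in{}^{p}D^{\le-1}$ along a closed immersion, hence lies in ${}^{p}D^{\le-1}$ by right $t$-exactness of $*$-pullback for the perverse $t$-structure. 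Plugging these into the localisation triangle $a_{!}a^{*}i^{*}IC\to i^{*}IC\to b_{*}b^{*}i^{*}IC\to[1]$ and using that $a_{!}$ is right $t$-exact, $b_{*}$ is $t$-exact, and ${}^{p}D^{\le-1}$ is stable under extensions, I get $i^{*}IC\in{}^{p}D^{\le-1}(Z)$. The dual statement $i^{!}IC\in{}^{p}D^{\ge 1}(Z)$ follows either by the dual dévissage or by applying the support statement just proved to $D\,IC=IC_{X}(\mathcal L^{\vee}(d))$, which is again the intersection complex of a pure local system.

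Finally I would combine (a) with the extended (b). For $i^{*}$: ${}^{p}H^{q}(i^{*}IC)=0$ for $q\ge 0$ by the previous paragraph, and ${}^{p}H^{q}(i^{*}IC)$ has weights $\le w+q$ for all $q$ by (a); since $w+q\le w-1=n+d-1$ whenever $q\le-1$, every ${}^{p}H^{q}(i^{*}IC)$ has weights $\le n+d-1$, i.e.\ $i^{*}IC\in{}^{w}D^{\le n+d-1}(Z)$. Symmetrically ${}^{p}H^{q}(i^{!}IC)=0$ for $q\le 0$ and has weights $\ge w+q\ge n+d+1$ for $q\ge 1$, so $i^{!}IC\in{}^{w}D^{\ge n+d+1}(Z)$. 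The only non-formal ingredient is (a): purity of $IC$ and the behaviour of weights under $i^{*},i^{!}$. The point needing care is to have (a) for the canonical weight filtration of \ref{discuss:realizations} in the required generality: when $k$ is finitely generated over $\Q$ (in particular for a reflex field, the case of interest) this filtration coincides with Deligne's and (a) is exactly \cite[Ch.~5]{BBD}, while the general characteristic-$0$ case reduces to it by spreading $X$ out over a finitely generated subfield of $k$ and base change. The dévissage of the third paragraph is routine but must be spelt out, as \cite{BBD} states (b) only for the open--closed complement.
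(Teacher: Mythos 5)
Your argument is correct and takes essentially the same route as the paper: purity of the intersection complex, support/cosupport conditions at a closed $Z\neq X$, and the (semi-)exactness of $i^*$, $i^!$ for weights, combined to shave one off the estimate. There are three small points of comparison worth noting. First, a cosmetic one: the paper's normalisation has $IC_X(\mathcal L)=r(j_{!*}N)$ concentrated in perverse degree $\dim X$ (so $^pH^i$ is nonzero only for $i=\dim X$), whereas you work with $j_{!*}(\mathcal L[d])$, which is perverse; your inequalities $w+q\le w-1$ for $q\le -1$ are the exact translate of the paper's $n+i\le n+\dim X-1$ for $i\le\dim X-1$. Second, your third paragraph supplies the dévissage from the open--closed complement of $U$ to an arbitrary closed $Z\neq X$ (splitting $Z$ into $Z\cap U$ and $Z\smallsetminus U$ and using the localisation triangle). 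The paper dispatches this with a single citation to \cite{BBD}, which strictly speaking only treats the complement of $U$; your spelling it out is a genuine improvement in precision, and the estimate $\dim(Z\cap U)\le d-1$ is exactly what makes it work. Third, for the weight behaviour under $i^*,i^!$ the paper stays entirely on the motivic side: it uses that $i^*$ decreases and $i^!$ increases Bondarko weights in $DM$, and then transports the resulting filtration on $^pH^i(r(\cdot))$ via \ref{discuss:realizations}. You instead invoke \cite[5.1.14]{BBD} on the $\ell$-adic side and then reduce the general characteristic-$0$ case to $k$ finitely generated over $\Q$ by spreading out; this is also fine, but it is slightly longer than the paper's route, which needs no spreading-out at this step because the Bondarko-weight statement is intrinsic and compatible with realization by construction.
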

\begin{proof}
	The weights on $i^*r(A)$ (resp. $i^!r(A)$) are by definition the weights on $r(i^*A)$ (resp. $r(i^!A)$), where $A=j_{!*}N$. $IC_X(\mathcal L)$ is perverse (upto a shift by $\dim X$). Further since $j_{!*}N$ is pure of weight $n$, it follows that $IC_X(\mathcal L)$ is also pure of weight $n$ by definition. In summary
	\[
		^pH^i(IC_X(\mathcal L)) \text{ vanishes for } i\ne\dim X \text{ and is pure of weight }n+i.
	\]
	We know that for the intersection complex, $^pH^j(i^*IC_X(\mathcal L)) =0$ for any $j\ge \dim X$, while $^pH^j(i^!IC_X(\mathcal L)) \le 0$ (see \cite[2.1.9]{BBD}). 
	Since $i^*$ decreases (Bondarko) weights (resp., $i^!$ increases Bondarko weights) while $i_*$  preserves weights, it follows that
	\begin{align*}
		^pH^i(i^*IC_X(\mathcal L)) &\text{ has weights }\le n+i \le n+\dim X-1\\
		{^pH^i}(i^!IC_X(\mathcal L)) &\text{ has weights }\ge n+i \ge n+\dim X+1
	\end{align*}
	and hence the statement follows by definition of $({^wD}^{\le n}(X), {^wD}^{\ge n}(X))$ \ref{def:weightTruncationInRealization}.
\end{proof}

Following lemma will be useful:
\begin{lemma}\label{jpreservesweights}
	Let $A\in DM(X)$ such that $r(A)\in {^wD^{\le n}}(X)$ (resp. $r(A)\in {^wD^{> n}}(X)$). Then for any open immersion $j:U\hookrightarrow A$, $r(j^*(A))\in {^wD^{\le n}}(U)$ (resp. $r(j^*(A))\in {^wD^{> n}}(U)$). 
\end{lemma}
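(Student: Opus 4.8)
The plan is to reduce the statement to two elementary facts about an open immersion $j\colon U\hookrightarrow X$ (the intended morphism; the ``$j\colon U\hookrightarrow A$'' in the statement is a typo): that $j^{*}$ commutes with realization, and that $j^{*}\cong j^{!}$, so that $j^{*}$ is simultaneously perverse $t$-exact and weight-exact. First I would record that $r(j^{*}A)=j^{*}r(A)$, since realization commutes with Grothendieck's four functors (\cite[7.2.24]{cisinski2016etale}); in particular $r(j^{*}A)$ again lies in the essential image of $r$, so that by \ref{def:weightTruncationInRealization} it makes sense to speak of the canonical (Bondarko-induced, see \ref{discuss:realizations}) weight filtration on its perverse cohomology objects ${}^{p}H^{i}(r(j^{*}A))$.

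Next I would use that $j$ is open, so $j^{*}\cong j^{!}$. This makes $j^{*}$ perverse $t$-exact on $D^{b}_{c}$, whence ${}^{p}H^{i}(r(j^{*}A))={}^{p}H^{i}(j^{*}r(A))=j^{*}\bigl({}^{p}H^{i}(r(A))\bigr)$ for every $i$, with $j^{*}\colon \mathrm{Perv}(X)\to\mathrm{Perv}(U)$ an exact functor of abelian categories; and it also makes $j^{*}$ weight-exact in $DM$, since $j^{*}$ is right weight-exact (as $f^{*}$ always is) and $j^{!}$ is left weight-exact (as $f^{!}$ always is). The one genuinely substantive point, which I expect to be the main obstacle, is the compatibility of the canonical weight filtration with $j^{*}$. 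To settle it I would fix Bondarko weight truncations $w_{\le m}A\to A$ in $DM(X)$; weight-exactness of $j^{*}$ turns $j^{*}w_{\le m}A\to j^{*}A$ into a choice of weight truncations of $j^{*}A$, and applying the exact functor $j^{*}$ to the defining identity $W_{m}\,{}^{p}H^{i}(r(A))=\image\bigl({}^{p}H^{i}(r(w_{\le m}A))\to{}^{p}H^{i}(r(A))\bigr)$ (using perverse $t$-exactness of $j^{*}$ once more to identify ${}^{p}H^{i}$ of the realizations of the truncations) yields $W_{m}\,{}^{p}H^{i}(r(j^{*}A))=j^{*}\bigl(W_{m}\,{}^{p}H^{i}(r(A))\bigr)$.

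Granting this, the conclusion is immediate in both cases. If $r(A)\in{}^{w}D^{\le n}(X)$, then $W_{n}\,{}^{p}H^{i}(r(A))={}^{p}H^{i}(r(A))$ for all $i$, so $W_{n}\,{}^{p}H^{i}(r(j^{*}A))=j^{*}\,{}^{p}H^{i}(r(A))={}^{p}H^{i}(r(j^{*}A))$, i.e. $r(j^{*}A)\in{}^{w}D^{\le n}(U)$; and if $r(A)\in{}^{w}D^{>n}(X)$, then $W_{n}\,{}^{p}H^{i}(r(A))=0$ for all $i$, so $W_{n}\,{}^{p}H^{i}(r(j^{*}A))=j^{*}(0)=0$, i.e. $r(j^{*}A)\in{}^{w}D^{>n}(U)$. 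Everything outside the weight-filtration compatibility is bookkeeping with $t$-exactness and weight-exactness of $j^{*}$ for an open immersion.
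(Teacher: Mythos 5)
Your proof is correct and follows essentially the same route as the paper's: the paper's one-line argument is that the weight filtration on ${}^{p}H^{i}(r(A))$ restricts along $j^{*}$ because $j^{*}$ is both weight-exact in $DM$ and perverse $t$-exact, which are exactly the two facts you isolate and then expand. Your version simply fills in the bookkeeping (commutation with realization, the image formula defining the canonical filtration from \ref{discuss:realizations}) that the paper leaves implicit.
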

\begin{proof}
	A weight filtration of $^pH^i(r(A))$ restricts to a weight filtration of $^pH^i(r(j^*A))$ since $j^*$ is exact for Bondarko weights as well as commutes with $^pH^i$.
%%	Bondarko's weight structure on $DM(X)$ is bounded (we are working with compact objects only), and since the categories ${^wD^{\le n}}$ and ${^wD^{> n}}$ are triangulated sub-categories while realization functor preserves triangles, it follows that we can assume $A$ to be in $CHM(X)$, the heart of the weight structure. Then $^pH^i(r(A))$ has weights $= i$. Hence $r(A)\in {^wD^{\le n}}(X)$ (resp. $r(A)\in {^wD^{> n}}(X)$), is equivalent to saying that $^pH^i(r(A))=0$ for $i>n$ (resp. for $i\le n$). 
%%	But $j^*$ is perverse exact on the category $D_c^b(S)$, and $j^*(CHM(S))\subset CHM(U)$. Hence it follows that $^pH^i(r(j^*A))$ has weights $=i$, and vanishes for $i> n$ (resp. $i\le n$). That is, $r(j^*(A))\in {^wD^{\le n}}(U)$ (resp. $r(j^*(A))\in {^wD^{> n}}(U)$) as required.
\end{proof}

\section{Main results} \label{sec:mainresults}

\subsection{Weight truncations over a field} \label{sec:overField}
%\emph{In this subsection $k$ will denote a \emph{perfect} field.} TODO.

Let $DM(k)$ denote the rigid symmetric monoidal triangulated category of motives over $k$ with $\Q$ coefficients as in \S \ref{sec:newmotives}. We will be particularly interested in motives of Abelian varieties over $k$. 

\begin{definition} For $i\in\Z$, we make the following definitions:
\begin{align*}
	S_d(k)	&:= \{h(X)(-j)\in DM(k)\big| X/k\text{ an abelian variety}, \dim X\le d\} \\
	S_d^{\le i}(k)	&:= \{h^{l}(X)(-j)\in DM(k)\big| X/k\text{ an abelian variety}, \dim X\le d, 0\le l\le 2d, (l+2j)\le i\} \\
	S_d^{> i}(k)	&:= \{h^{l}(X)(-j)\in DM(k)\big| X/k\text{ an abelian variety}, \dim X\le d, 0\le l\le 2d, (l+2j)> i\}\\
	{^wDM_{d}^{Ab}}(k) &:= \vvspan{S_d(k)}\hspace{3em} {^wDM_{d}^{Ab\le i}}(k) := \vvspan{S_d^{\le i}(k)} \hspace{3em} {^wDM_{d}^{Ab> i}}(k) := \vvspan{S_d^{> i}(k)}
\end{align*}
where the generation is inside the triangulated category $DM(k)$. Note that we are not insisting $j$ to be non-negative.
\end{definition}
 We are only interested in the case $d=2$. It is on the category $DM_2^{Ab}(k)$ (that is the dimension of Abelian varieties that appear are $\le 2$) that we will be able to extend the functors $h^{\le i}$ and $h^{>i}$. The extended functor will be the motivic analogue of the Morel's weight truncations \cite{morelThesis} over a field and hence truncations for the same will be denoted as $w_{\le i}$. 

\begin{theorem}\label{tstruct:DMAb}
	For all $i\ge 0$, $(DM_2^{Ab\le i}(k), DM_2^{Ab>i}(k))$ forms a $t$-structure on $DM_2^{Ab}(k)$.
\end{theorem}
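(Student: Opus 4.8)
The plan is to obtain Theorem~\ref{tstruct:DMAb} as an instance of the abstract criterion \ref{tFromGenerators}, reducing everything to a single vanishing statement for $\hom$'s between Chow--Künneth components of distinct weights; that vanishing is where the actual motivic cohomology of abelian varieties of dimension $\le 2$ (and hence the dimension restriction) enters, and it is the only non-formal ingredient.

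\textbf{Reduction via \ref{tFromGenerators}.} Let
\[
  H := \{\, h^l(X)(-j)\in DM(k) \;\big|\; X/k\text{ an abelian variety},\ \dim X\le 2,\ 0\le l\le 2\dim X,\ j\in\Z \,\}
\]
be the set of all Chow--Künneth components (with all Tate twists) of the generators of $S_2(k)$. Since $h(X)(-j)=\bigoplus_{0\le l\le 2\dim X} h^l(X)(-j)$ and a triangulated subcategory closed under summands is closed under finite direct sums, we get $\vvspan{H}=\vvspan{S_2(k)}=DM_2^{Ab}(k)$. Put $A:=S_2^{\le i}(k)\cup\{0\}$ and $B:=S_2^{>i}(k)\cup\{0\}$; adjoining $0$ does not change the spans, so $\vvspan{A}=DM_2^{Ab\le i}(k)$ and $\vvspan{B}=DM_2^{Ab>i}(k)$, both contained in $\vvspan{H}$. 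For each $h^l(X)(-j)\in H$ either $l+2j\le i$, in which case $h^l(X)(-j)\xrightarrow{\ \mathrm{id}\ }h^l(X)(-j)\to 0\to$ is a distinguished triangle with left term in $A$ and right term $0\in B$, or $l+2j>i$, in which case $0\to h^l(X)(-j)\xrightarrow{\ \mathrm{id}\ }h^l(X)(-j)\to$ does the job. Hence all hypotheses of \ref{tFromGenerators} are satisfied once we establish the orthogonality
\begin{equation}\label{orthtoprove}
  \hom_{DM(k)}\!\bigl(h^l(X)(-j),\,h^{l'}(X')(-j')[n]\bigr)=0
  \quad\text{for all }n\in\Z,\ \dim X,\dim X'\le 2,\ l+2j\le i<l'+2j'.
\end{equation}
Granting \eqref{orthtoprove}, \ref{tFromGenerators} shows moreover that $\bigl(DM_2^{Ab\le i}(k),DM_2^{Ab>i}(k)\bigr)$ is an $m$-structure on $DM_2^{Ab}(k)$, which is in particular the $t$-structure asserted.

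\textbf{The orthogonality \eqref{orthtoprove}.} The hypothesis forces $w:=(l'+2j')-(l+2j)\ge 1$. Twisting by $(j)$ and using the Poincaré-duality isomorphism $h^l(X)^\vee\cong h^l(X)(l)[2l]$ for a Chow--Künneth component (a formal consequence of the self-duality recalled in \S\ref{sec:chowForAb}) together with rigidity of $DM(k)$, the left side of \eqref{orthtoprove} is identified with $\hom_{DM(k)}(1_k,\,Q(q)[p])$, where $p=n+2l$, $q=l+j-j'$, and $Q$ is the direct summand of $h^{l+l'}(X\times X')$ cut out by the Künneth projector onto $h^l(X)\otimes h^{l'}(X')$ (using multiplicativity of the Chow--Künneth decomposition for abelian varieties); note that $Q(q)$ is then pure of weight $w\ge 1$ and $2q<l+l'$. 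If $q<0$, this group is a direct summand of the motivic cohomology $H^p_{\mathcal M}(X\times X',\Q(q))$, which vanishes since motivic cohomology of a smooth scheme in negative Tate twists is zero. If $q\ge 0$, the vanishing of $\hom_{DM(k)}(1_k,Q(q)[p])$ for such a $Q(q)$ --- a summand of strictly positive weight of a Chow--Künneth component of a product of two abelian varieties of dimension $\le 2$ --- is precisely the type of motivic cohomology computation carried out in \cite{vaish2017weight} (compare Remark~\ref{h:oldisnew}, and \cite{scholl1994classical} for the components $h^0,h^1$); it is in this sense that \ref{tstruct:DMAb} is ``already implicit'' there, and it is the only step that uses more than formal properties of $DM(k)$ and the Chow--Künneth formalism. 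This $q\ge 0$ case is the main obstacle; once it is in hand, the theorem follows from the bookkeeping around \ref{tFromGenerators} above.
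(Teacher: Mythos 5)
Your proposal is correct and follows essentially the same route as the paper's proof: reduce via \ref{tFromGenerators} to orthogonality between Chow--K\"unneth generators, then reduce the relevant $\hom$-groups to motivic cohomology of (products of) abelian surfaces and invoke the computations of \cite{vaish2017weight}. Two small remarks on presentation versus the paper. First, the paper takes $H=S_2(k)$ (the undecomposed $h(X)(-j)$) and exhibits the decomposition triangle $h^{\le r}(X)(-j)\to h(X)(-j)\to h^{>r}(X)(-j)$, while you take $H$ to be the set of Chow--K\"unneth pieces themselves so that every generator already lies in $A$ or $B$; both choices generate the same span and the second is if anything cleaner. Second, you explicitly keep $X$ and $X'$ distinct, while the paper's displayed reduction writes the same $X$ in both slots and collapses via $h^3=h^1(-1)[-2]$, $h^4=h^0(-2)[-4]$ into an explicit list of eight $\hom$-vanishings before citing \cite[3.2.5]{vaish2017weight}; your version, passing through rigidity to $\hom(1_k,Q(q)[p])$ and splitting off the trivial case $q<0$, is a bit more abstract but amounts to the same computation. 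The one place where the paper is more precise than you is the final citation: the paper points to the exact statement \cite[3.2.5]{vaish2017weight} whose list of vanishings matches the eight cases produced by the reduction, whereas you gesture at ``the type of motivic cohomology computation carried out in \cite{vaish2017weight}.'' Since this is exactly where the dimension bound $d\le 2$ is used and is, as you say, the only non-formal input, it would be worth making the reduction to that explicit list so the appeal is to a concrete result rather than a type of result.
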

\begin{proof}
	For $A=h(X)(-j)\in S_d(k)$, let $r\ge -1$ be largest integer such that $r+2j\le i$. Then:
	\[
		h^{\le r}(X)(-j)	\rightarrow A	\rightarrow h^{> r}(X)(-j)\rightarrow 
	\]
	is a distinguished triangle with first term in $DM_2^{Ab\le i}(k)$ and last in $DM_2^{Ab>i}(k)$ by definition. This gives a decomposition for each object of $S_d(k)$. Since $DM_d^{Ab}(k)=\vvspan{S_d(k)}$, by \ref{tFromGenerators} it is enough to show that $\hom(A,B[m])=0$ for all $A\in S_d^{\le i}$ and $B\in S_d^{> i}(k)$ and any $m\in \Z$.
	
	Therefore we want to show that, for all $l,l',j,j',m\in \Z$ we have  
	\[
		\hom (h^l(X)(-j), h^{l'}(X)(-j')[m])=0\text{ whenever } (l'+2j')-(l+2j)>0 \text{ (eqiv. }j'-j>\frac{l-l'}{2}\text{)}
	\]
	Now notice that we need $0\le l,l'\le 4$ for the quantities to be not zero. Furthermore, we have that $h^3(X)=h^1(X)(-1)[-2]$, and $h^4(X)=h^0(X)(-2)[-4]$, therefore we can even assume that $0\le l,l'\le 2$. Define $r:=j'-j$. Then we have to show the vanishings in the following cases
	\begin{align*}
		(l,l')\in \{(0,1), (1,2), (0,2)\} &\text{ and } r=j'-j\ge 0	\\
		(l,l')\in \{(0,0), (1,1), (2,2), (1,0), (2,1)\} &\text{ and } r=j'-j\ge 1 \\
		(l,l')\in \{(2,0)\} &\text{ and } r=j'-j\ge 2
	\end{align*}
	Since Tate twist, $A\mapsto A(1)$, is an equivalence of categories, $\hom (h^l(X)(-j), h^{l'}(X)(-j')[m]) = \hom (h^l(X), h^{l'}(X)(-r)[m])$. Therefore the claim will follow from the vanishings of (for $r,m\in \Z, r\ge 0$):	
	\begin{align*}
		\hom(h^0(X), h^{\ge 1}(X)(-r)[m]) 	&=0 	& \hom(h^0(X), h(X)(-r-1)[m]) &= 0\\
		\hom(h^1(X), h^{\ge 2}(X)(-r)[m]) 	&=0 	& \hom(h^1(X), h(X)(-r-1)[m] &=0\\
		\hom(h^2(X), h^{\ge 3}(X)(-r)[m]) 	&=0 	& \hom(h^2(X), h^{\ge 1}(X)(-r-1)[m]) &= 0\\
		\hom(h^2(X), h(X)(-r-2)[m])			&=0 	& \hom(h^0(X), h^{\ge 2}(X)(-r+1)[m]) &=0		
	\end{align*}
 	These vanishings were proved in \cite[3.2.5]{vaish2017weight} (for $X$ an arbitrary surface, not merely an Abelian variety, where we use \ref{h:oldisnew} to identify the two).
\end{proof}

\begin{remark}
	In fact the same calculations show that the corresponding $t$-structure (for arbitrary $F$) can be extended to the smallest triangulated subcategory generated by motives of surfaces (and not merely Abelian surfaces) up to arbitrary Tate twists. This is a mild generalization of the key result in \cite{vaish2017weight}.
\end{remark}

\subsection{Spreading out}\label{sec:spread} Let us fix a base field $k$. For any finitely generated field $K/k$ let $t(K)$ denote the transcendence degree of $K/k$ (typically $K$ will be the function field of a variety of finite type over $k$ and then $t(K)$ measures the dimension of the variety). Fix $X$ an irreducible variety of finite type over $k$. 

\begin{definition}\label{define:motivicMorelTStructure}
	Fix a function $F:\Z_{\ge 0}\rightarrow \Z$ to be any monotone step function (i.e. $F$ is monotone and $0\le |F(x)-F(x-1)|\le 1$. It follows that for $x\ge y$
		\[
			-x+y\le F(x)-F(y)\le x-y
		\]
	which is what will be essentially used below. The definitions are inspired by \cite[\S 3.1]{arvindVaish}, but for the main application in this article, we could also work with just the constant function $\dim X$.
	
	 By $({^wDM^{\le F}(K)}, {^wDM^{> F}(K)})$ we mean the $t$-structure, $({DM_2^{Ab\le i_K}(K)}, {DM_2^{Ab> i_K}(K)})$ on the category $DM^{Ab}_2(K)$ where $i_K=F(t(K)) - t(K)$. For any $K$, this is a $t$-structure by \ref{tstruct:DMAb}, and hence by \ref{gluing:mainresult} induces a $t$-structure on a suitable full subcategory $D'(X)\subset DM(X)$.
\end{definition}

\begin{lemma}\label{spreading:truncation}
	Let $A\in DM(X)$ such that for the generic point $\eta:\Spec K\hookrightarrow X$, $\eta^*(A)\in DM^{Ab}_2(K)$ (resp. $\eta^*(A)\in DM^{Ab, \le F}(K)$, resp. $\eta^*(A)\in DM^{Ab, >F}(K)$). Then there is a neighbourhood $j:U\subset X$, such that $j^*A\in D'(U)$ (resp. $j^*A\in {^wDM^{\le F}}(U)$, resp. $j^*A\in {^wDM^{>F}}(U)$).
\end{lemma}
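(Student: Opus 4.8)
The plan is to use a standard spreading-out (limit) argument for the category $DM$ together with the explicit generators of the categories $DM_2^{Ab}(K)$, $DM_2^{Ab,\le F}(K)$ and $DM_2^{Ab,>F}(K)$, and then to check the pointwise membership conditions of \ref{gluing:spreadingOut} after passing to a small enough neighbourhood. First I would treat the case $\eta^*(A)\in DM_2^{Ab}(K)$. By definition $DM_2^{Ab}(K)=\vvspan{S_2(K)}$, so $\eta^*(A)$ is obtained from finitely many objects $h(X_\alpha)(-j_\alpha)$, with $X_\alpha/K$ abelian varieties of dimension $\le 2$, by finitely many shifts, cones and summands. Since these are finitely many abelian varieties over $K$ and finitely many morphisms/projectors among the iterated extensions, and since $DM(-)$ satisfies continuity, there is an affine open $j:U\hookrightarrow X$ such that each $X_\alpha$ spreads to an abelian scheme $\mathcal X_\alpha/U$, all the structural morphisms spread to morphisms over $U$, and $j^*A$ is built from the $h_U(\mathcal X_\alpha)(-j_\alpha)$ by the same recipe (here I use that $\eta^*j^* = \eta^*$ for the generic point, and compactness of the objects so that the relevant $\Hom$-groups are colimits over shrinking $U$).

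Next I would verify that this $j^*A$ lies in $D'(U)$ as defined in \ref{gluing:spreadingOut}, i.e. that for every Zariski point $\epsilon:\Spec k'\hookrightarrow U$ both $\epsilon^*$ and $\epsilon^!$ of it land in the appropriate pieces over $k'$. The point is that the proposition recalled in \S\ref{sec:chowForAb} gives, for an abelian scheme $\mathcal X_\alpha/U$ and any regular point $\epsilon$ of codimension $c$, the identifications $\epsilon^*h_U^i(\mathcal X_\alpha)=h_{k'}^i(\mathcal X_\alpha|_{k'})$ and $\epsilon^!h_U^i(\mathcal X_\alpha)=h_{k'}^i(\mathcal X_\alpha|_{k'})(-c)[-2c]$. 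Hence $\epsilon^*$ carries $S_2(U)$-type generators to $S_2(k')$ and $\epsilon^!$ does likewise (up to the twist-and-shift, which again lands in $S_2(k')$ because we do not insist the Tate twist be non-negative); since $\epsilon^*$ and $\epsilon^!$ are triangulated and commute with shifts, cones and summands, they carry all of $\vvspan{\{h_U(\mathcal X_\alpha)(-j_\alpha)\}}$ into $DM_2^{Ab}(k')$. Therefore $j^*A\in DM_2^{Ab}(k')\subset D'(k')$ pointwise, which is exactly membership in $D'(U)$. This handles the first assertion.

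For the refined assertions, I would run the same argument but keep track of the weight labels. If $\eta^*(A)\in DM^{Ab,\le F}(K) = DM_2^{Ab\le i_K}(K)=\vvspan{S_2^{\le i_K}(K)}$, then the spreading generators can be taken of the form $h^{l_\alpha}(\mathcal X_\alpha)(-j_\alpha)$ with $l_\alpha+2j_\alpha\le i_K = F(t(K))-t(K)$; this uses that the Chow--Künneth projectors themselves spread out, so that the summands $h^{l}$ of $h(\mathcal X_\alpha)$ over $U$ restrict to the correct $h^l$ at the generic point. The key numerical check is that for any point $\epsilon$ of $U$ lying in a subvariety of dimension $t(k')$, applying $\epsilon^*$ keeps the label $l_\alpha+2j_\alpha$ unchanged, and since shrinking $U$ we may assume $\dim U = t(K)$ and $t(k')\le t(K)$, monotonicity of $F$ gives $i_{k'}=F(t(k'))-t(k')\ge F(t(K)) - t(K) = i_K$ (here one uses $F(x)-F(y)\ge -(x-y)$, i.e. $F(t(K))-F(t(k'))\ge -(t(K)-t(k'))$, so $F(t(k'))-t(k')\ge F(t(K))-t(K)$). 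Hence $\epsilon^*(j^*A)\in DM_2^{Ab\le i_{k'}}(k') = DM^{\le F}(k')$, so $j^*A\in {^wDM^{\le F}}(U)$. The case $\eta^*(A)\in DM^{Ab,>F}(K)$ is dual: one uses $\epsilon^!$ instead of $\epsilon^*$, which shifts the label $l_\alpha+2j_\alpha$ up by $2c$ where $c$ is the codimension of $\epsilon$ in $U$, i.e. $c = t(K)-t(k')$, and one checks $l_\alpha+2j_\alpha+2c > i_{k'}$ follows from $l_\alpha+2j_\alpha>i_K$ together with $i_K + 2(t(K)-t(k')) \ge i_{k'}$, i.e. $F(t(K))-t(K)+2t(K)-2t(k')\ge F(t(k'))-t(k')$, equivalently $F(t(K))-F(t(k'))\ge -(t(K)-t(k'))$, which is again monotonicity of $F$.

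The main obstacle I expect is the bookkeeping in the spreading-out step itself, i.e. making precise that a finite iterated-cone-and-summand presentation of $\eta^*A$ over $K$ actually descends to a presentation of $j^*A$ over some $U$ with $\eta^*j^*A$ matching $\eta^*A$; this requires invoking compactness of the objects of $DM$ and the continuity property so that $\Hom_{DM(X)}(a,b)=\varinjlim_U \Hom_{DM(U)}(a|_U,b|_U)$, and ensuring that the finitely many morphisms entering the construction (the attaching maps of the cones, and the idempotents cutting out the summands) are all defined over a common $U$; one must also arrange that the Chow--Künneth idempotents of the $\mathcal X_\alpha$ are defined over $U$, which is immediate since they are defined over any base by the proposition of \S\ref{sec:chowForAb}. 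Everything else is the routine verification above that $\epsilon^*$ and $\epsilon^!$ preserve the relevant generating sets and weight labels, which is purely a consequence of the Chow--Künneth base-change formulae and the monotone-step-function inequality already recorded in \ref{define:motivicMorelTStructure}.
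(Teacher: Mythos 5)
Your handling of the two ``weighted'' cases $\eta^*A\in DM^{Ab,\le F}(K)$ and $\eta^*A\in DM^{Ab,>F}(K)$ matches the paper's argument: reduce by continuity to a generator $h^{l}(\mathcal X)(-j)$ spread out over a dense regular open $U$, compute $\epsilon^*$ (resp.\ $\epsilon^!$, noting the extra twist/shift $(-c)[-2c]$ from purity) at each point, and close with the monotone step inequality $F(t(K))-F(t(k'))\ge -(t(K)-t(k'))$. That is precisely what the paper does, including the point that for $D^{>}$ one must test with $\epsilon^!$.

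The gap is in the first case. You write that $\epsilon^*(j^*A)$ and $\epsilon^!(j^*A)$ land in $DM^{Ab}_2(k')$ for every point $\epsilon$ of $U$, and conclude ``which is exactly membership in $D'(U)$.'' That is not what $D'(U)$ means. By \ref{gluing:spreadingOut}, $D^{\le}(U)$ and $D^{>}(U)$ are defined pointwise, but $D'(U)$ is \emph{not}: membership in $D'(U)$ requires the existence of an actual distinguished triangle $b\to j^*A\to c\to$ \emph{over $U$} with $b\in D^{\le}(U)$ and $c\in D^{>}(U)$. Pointwise membership of $\epsilon^*(j^*A)$ in $D'(k')$ only says that for each point separately there is some decomposition over $k'$; it does not produce the needed triangle over $U$. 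The paper closes this by spreading out: it takes the truncation $\hat B=w_{\le F}(\eta^*A)$ over $K$ together with $\hat g:\hat B\to\eta^*A$, uses continuity to realize $\hat g=\eta^*g$ for $g:B\to j^*A$ on a neighbourhood, and then applies the already-proved $\le F$ and $>F$ cases to $B$ and to $\mathrm{Cone}(g)$ after further shrinking. (An alternative repair consistent with your approach: each spread-out generator $h_U(\mathcal X_\alpha)(-j_\alpha)$ does carry an explicit decomposition triangle over $U$ via Chow--K\"unneth, hence lies in $D'(U)$; since \ref{gluing:mainresult} says $D'(U)$ is a pseudo-abelian triangulated subcategory, the span containing $j^*A$ is also inside $D'(U)$. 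But this is not what you wrote, and the claim as stated is false.)
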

\begin{proof}
	Let $\eta^*A\in DM^{Ab,> F}(K) = \vvspan{S^{> i_K}_2(K)}$ for $i_K = F(t(K)) - t(K)$. Since objects in $\vvspan{S^{> i_K}_2(K)}$ are obtained by finitely many iterations of shifts, cones, and taking summands of objects in $S_2^{> i_K}(K)$, and $\eta^*$ preserves these operations, we can restrict to the case $\eta^*A\in S^{> i_K}_2(K)$. 
	
	Hence assume that $\eta^*A = h^k(Y_\eta)(-j)$ for $k+2j> i_K = F(t(K)) - t(K)$ where $\pi:Y_\eta \rightarrow \Spec K$ is an abelian variety of dimension $\le 2$. Spreading out, we can find an abelian scheme over $Y_U\rightarrow U$ over some open dense $U$ in $X$ such that $(Y_U)|_\eta = Y_\eta$. By restricting $U$ if needed, we can also assume that $U$ is regular.
	
	Consider $B := h^k(Y_U)(-j)\in DM(U)$. Then for any $\epsilon:\Spec L\hookrightarrow U$, we have that $\epsilon^*B := \epsilon^*(h^k(Y_U)(-j)) = h^k(\epsilon^*Y_U)(-j)= h^k(Y_\epsilon)(-j)$. Also, since $Y_U/U$ is smooth $Y_U, Y_\epsilon$ are regular and using purity and definition of $\epsilon^!$ it follows that $\epsilon^!(B) = B(-d)[-2d]$ where $d=(t(K)-t(L))$.
	
	Since $Y_\epsilon:=(Y_U)|_\epsilon$ is also an abelian scheme, and 
	\[
		k+2j + 2(t(K)-t(L))>F(t(K))- t(K) + 2(t(K) - 2t(L)) \ge F(t(L)) - t(L)
	\]
	hence $\epsilon^*B\in DM^{Ab, > F}(L)$. Since $\eta^*B\cong \eta^*B$, by shrinking $U$ further if needed, using continuity, we can assume that $B=j^*A$. The claim is now immediate. %TODO we may have to work on a finite extension of $K$. 
	
	The calculation for $\eta^*A\in DM^{Ab,\le F}(K)$ is ditto and even simpler.
	
	For $\eta^*A \in DM^{Ab}_2(K)$, let $\hat B = w_{\le F}(\eta^*a)$, the truncation for the $t$-structure for $F$ on $DM^{Ab}_2(K)$, and let $\hat g:\hat B\rightarrow \eta^*B$ denote the natural morphism. Then, by continuity, $\hat g=\eta^*g$ for some morphism $g:B\rightarrow j^*A$ on a neighbourhood $U$, such that $\hat B = \eta^*B$. By restricting $U$ further, by what we saw above, we can assume that $B\in {^wDM^{\le F}}(U)$. Let $C = Cone(h)$. Then, by definition of $\hat g$, we have $\eta^*C = w_{>F}\eta^*A$. Hence by what we saw above, and restricting $U$ further if needed, we can assume that $C\in {^wDM^{> F}}(U)$. Hence, by definition, $j^*A\in D'(U)$ as required.
\end{proof}

\begin{lemma}\label{weightsOfRealization}
	Let $S/k$ be an irreducible scheme and $F$ be a monotone step function. Let $A\in {^wDM^{\le F}}(S)$ (resp. $A\in {^wDM^{> F}}(S)$). Then there is a $j:U\subset S$ open dense, such that $r(j^*A))\in {^wD}^{\le F(t(K))}(U)$ (resp. $r(j^*A))\in {^wD}^{> F(t(K))}(U)$, where $\eta:\Spec K\hookrightarrow S$ is the generic point.
\end{lemma}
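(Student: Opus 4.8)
The plan is to reduce to the generators of the span and then transfer the weight inequality across the realization functor using the facts already assembled in Section \ref{sec:motivic_ic}. First I would observe that the statement is local on $S$ (it only asserts existence of $j:U\subset S$) and that $r$, $^pH^\bullet$ and the operations on weight filtrations all commute with shifts, cones and summands; so by the very construction of $\vvspan{\cdot}$ it suffices to prove the claim when $A$ is one of the generators, i.e.\ (after spreading out as in the proof of \ref{spreading:truncation}) $A = h^l(Y)(-j)$ for an abelian scheme $\pi:Y\rightarrow S$ of relative dimension $\le 2$, with $l+2j\le F(t(K))-t(K)$ in the $^wDM^{\le F}$ case (resp.\ $>$ in the other case). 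Shrinking $S$ I may assume $S$ regular and irreducible of dimension $t(K)$.

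Next I would identify the realization $r(h^l(Y)(-j))$. Since $\pi$ is a smooth proper abelian scheme, $r(h(Y)) = \pi_*\Q_l$ is (after the shift by $\dim S$) a pure perverse object, the Chow--K\"unneth idempotents realize to the projectors onto the graded pieces $R^l\pi_*\Q_l[\dim S]$, and a Tate twist by $(-j)$ shifts the weight up by $2j$. Concretely, $^pH^i(r(h^l(Y)(-j)))$ vanishes for $i\ne \dim S$ and $^pH^{\dim S}$ is the lisse sheaf $R^l\pi_*\Q_l(-j)[\dim S]$, which is pure of weight $l+2j+\dim S$ (using that over a regular base $R^l\pi_*\Q_l$ is lisse of weight $l$, and that the perverse shift adds $\dim S$ to the weight). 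Thus $^pH^i(r(A))$ has weight $\le l+2j+\dim S = l+2j+t(K) \le F(t(K))$ in the first case, and weight $>F(t(K))$ in the second, which is exactly membership in $^wD^{\le F(t(K))}(S)$ (resp.\ $^wD^{>F(t(K))}(S)$) in the sense of \ref{def:weightTruncationInRealization}. Shrinking $U$ once more so that the spreading-out of the previous paragraph is valid gives $r(j^*A)$ in the required subcategory, where I also invoke \ref{jpreservesweights} to pass from $S$ to $U$ if convenient.

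For the mixed generators of $\vvspan{S_2^{\le F(t(K))-t(K)}}$ the argument is identical term by term: each generator is a Tate twist of some $h^l$ of an abelian scheme, each realizes to a pure lisse object of the expected weight, and the closure under shifts/cones/summands is handled by the first reduction since $^pH^\bullet(r(-))$ is exact for Bondarko weights and the weight filtration is strictly functorial (\ref{discuss:realizations}). The main obstacle is bookkeeping rather than conceptual: I must make sure that the neighbourhood $U$ over which the abelian schemes spread out, over which $S$ is regular, and over which the lisse sheaves $R^l\pi_*\Q_l$ have the stated weight, can all be taken simultaneously — this is a finite intersection of dense opens since $A$ lies in the span of \emph{finitely many} generators, so it is harmless, but it is the one point that needs to be stated carefully. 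A secondary subtlety is the precise comparison of the motivic (Hebert/Bondarko) weight on $h^l(Y)(-j)$ with the weight of $R^l\pi_*\Q_l(-j)$ in the realization; this is exactly the compatibility recorded in \ref{discuss:realizations} (via \cite[2.1.2]{bondarko_weights}), so no new input is needed.
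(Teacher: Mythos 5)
Your proposal is correct and follows essentially the same route as the paper's proof: reduce to generators of the span via stability of $^wD^{\le n}$ under shifts/cones/summands and continuity, spread the abelian scheme out over a regular open $U$, and compute the weight of the realization using the fact that the Chow--K\"unneth projector for $h^l$ realizes to the $R^l\pi_*\Q_l$ piece. One small bookkeeping slip: the only nonvanishing perverse cohomology of $r\bigl(h^l(Y)(-j)\bigr)=R^l\pi_*\Q_l(-j)[-l]$ sits in degree $\dim S + l$, not $\dim S$ (you seem to have collapsed the two shifts), but its weight is indeed $l+2j+\dim S = l+2j+t(K)\le F(t(K))$, and since membership in $^wD^{\le F(t(K))}$ is a condition on \emph{all} perverse cohomology degrees the conclusion is unaffected. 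In fact your accounting of the $t(K)$ shift is more explicit than the published proof (which passes from $i_K = F(t(K))-t(K)$ over the generic point to the bound $F(t(K))$ after realization without spelling out the dimension shift), so this is the one point where your write-up is clearer than the original.
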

\begin{proof}
	Let $n=F(t(K))$. Let $A\in {^wDM^{\le F}}(S)$. Hence $\eta^*A\in DM_2^{Ab,\le n}(K)$. Hence by continuity it is enough to show that for each $A\in DM_2^{Ab,\le n}(K)$, there is an open $U\subset S$, and an object $\bar A\in DM(U)$, such that $r(\bar A)\in {^wD}^{\le n}(U)$. Since, the category ${^wD}^{\le n}(U)$ is stable under shifts, taking summands, or cones, therefore we can as well assume that $A\in S^{Ab,\le n}_2(K)$, in particular $A=h^i(X)(-j)$ with $i+2j\le n$ and $X$ an abelian scheme over $K$. By spreading out, we can find an abelian variety, $\bar X$ over some $U\subset S$ open dense, $U$ regular, and then we can consider $\bar A=h^i(\bar X)(-j)$. Using that $h^i$ is the $n^i$ eigenspace under multiplication by $n$, it is easy to see that
	\[
		r(h^i(\bar X))\subset \mathcal H^i(\bar X)[-i]\in {^wD}^{\le i}(S)\text{ and hence }r(h^i(\bar X)(-j)) \in {^wD}^{\le i+2j}\subset {^wD}^{\le n}
	\]
	 since $i+2j\le n$, as required. 
	 
	 The case when $A\in {^wDM^{> F}}(S)$ is similar. 
\end{proof}

A converse of the above also holds:

\begin{lemma}\label{spreading:realization}
	Let $A\in DM(X)$ such that for $\eta:\Spec K\rightarrow X$ generic, $\eta^*A\in DM^{Ab}_2(K)$. Assume that for some open set $f:V\hookrightarrow X$, $r(f^*A)\in {^wD}^{\le F(t(K))}(X)$ (resp. $r(f^*A)\in {^wD}^{>F(t(K))}(X)$). Then there is a $U\subset X$ open dense such that $j^*A\in DM^{Ab, \le F}(U)$ (resp. $j^*A\in DM^{Ab, >F}(U)$). 
\end{lemma}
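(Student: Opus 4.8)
The plan is to pass to the generic point, form the motivic analogue of Morel's weight truncation there via Theorem~\ref{tstruct:DMAb}, and then kill the ``wrong'' half of that truncation in two stages: first I would show its $\ell$-adic realization vanishes (using the weight hypothesis together with strictness of realizations of $DM$-morphisms), and then upgrade this to vanishing of the motive itself by conservativity, Proposition~\ref{conservativityonab}. In other words, one runs the proof of Lemma~\ref{weightsOfRealization} in reverse.

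First I would shrink $X$. Replacing $X$ by $V$ (keeping the same generic point $\eta:\Spec K\hookrightarrow X$, $K=k(X)$), then by a smaller regular open, I may assume $X$ is regular and $r(A)\in {^wD}^{\le F(t(K))}(X)$; this is legitimate since the realization hypothesis survives open restriction by Lemma~\ref{jpreservesweights}. Since $\eta^{*}A\in DM^{Ab}_{2}(K)$, the $DM^{Ab}_{2}(K)$ case of Lemma~\ref{spreading:truncation} gives an open $j:U\hookrightarrow X$ with $j^{*}A\in D'(U)$, so that the glued $t$-structure of \ref{define:motivicMorelTStructure} (Proposition~\ref{gluing:mainresult}) furnishes a truncation triangle
\[
	w_{\le F}(j^{*}A)\longrightarrow j^{*}A\longrightarrow w_{>F}(j^{*}A)\longrightarrow
\]
in $DM(U)$, restricting at $\eta$ to the truncation of $\eta^{*}A$ for the $t$-structure of Theorem~\ref{tstruct:DMAb}, with $w_{\le F}(j^{*}A)\in {^wDM^{\le F}}(U)$ and $w_{>F}(j^{*}A)\in {^wDM^{>F}}(U)$. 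Shrinking $U$ once more so that the generic fibres of both truncations --- which lie in $DM^{Ab}_{2}(K)$ --- spread out to motives of abelian schemes, I may assume both truncations lie in $DM^{Ab}(U)$ of Proposition~\ref{conservativityonab}, with $U$ still regular. It then suffices, after possibly shrinking $U$ yet again, to prove $w_{>F}(j^{*}A)=0$: this forces $j^{*}A\cong w_{\le F}(j^{*}A)\in {^wDM^{\le F}}(U)=DM^{Ab,\le F}(U)$, which is the assertion.

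For the vanishing: by Lemma~\ref{weightsOfRealization} applied to $w_{>F}(j^{*}A)$, after shrinking $U$ we have $r(w_{>F}(j^{*}A))\in {^wD}^{>F(t(K))}(U)$, so every ${^pH^{i}}$ of it has weights $>F(t(K))$. On the other hand $r(w_{>F}(j^{*}A))$ is the cone of the realization of the $DM(U)$-morphism $w_{\le F}(j^{*}A)\to j^{*}A$, and the realizations of its source and target have all ${^pH^{i}}$ of weights $\le F(t(K))$ (the target by the standing hypothesis and Lemma~\ref{jpreservesweights}, the source by Lemma~\ref{weightsOfRealization} after a further shrink). Since the perverse-cohomology maps of the realization of a $DM$-morphism are strict for the canonical weight filtrations (\ref{discuss:realizations}), the long exact sequence exhibits each ${^pH^{i}}$ of the cone as an extension of a sub-object of a weight-$\le F(t(K))$ object by a quotient of one, hence of weights $\le F(t(K))$. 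Thus every ${^pH^{i}}(r(w_{>F}(j^{*}A)))$ has weights both $\le F(t(K))$ and $>F(t(K))$ and vanishes; as the perverse $t$-structure is bounded, $r(w_{>F}(j^{*}A))=0$, and then conservativity (Proposition~\ref{conservativityonab}, applicable since $w_{>F}(j^{*}A)\in DM^{Ab}(U)$ with $U$ regular of finite type over $k$) gives $w_{>F}(j^{*}A)=0$. The ``resp.'' case is symmetric: applying the same argument to the rotated triangle $j^{*}A\to w_{>F}(j^{*}A)\to w_{\le F}(j^{*}A)[1]\to$ shows $r(w_{\le F}(j^{*}A))$ has weights both $\le F(t(K))$ and $>F(t(K))$, whence $w_{\le F}(j^{*}A)=0$ and $j^{*}A\cong w_{>F}(j^{*}A)\in DM^{Ab,>F}(U)$.

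The step I expect to require the most care is the claim that the cone $r(w_{>F}(j^{*}A))$ still has weights $\le F(t(K))$: in the non-mixed target $D^{b}_{c}$ this fails for a general cone, and it genuinely depends on the strictness of ${^pH^{i}}(r(f))$ for $DM$-morphisms $f$ recorded in \ref{discuss:realizations}, together with the elementary closure of ``weights $\le N$'' under sub-objects and quotients. The remaining effort is purely bookkeeping: tracking the finitely many shrinkings of $U$ (to reach $V$ and regularity, to spread the truncation triangle into $DM^{Ab}(U)$, and the two from applications of Lemma~\ref{weightsOfRealization}) so that a single open $U$ serves throughout.
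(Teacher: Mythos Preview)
Your proposal is correct and follows essentially the same route as the paper's proof: reduce to $V$, use Lemma~\ref{spreading:truncation} to obtain the truncation triangle on some open $U$, apply Lemma~\ref{weightsOfRealization} and Lemma~\ref{jpreservesweights} to control the weights of the realized perverse cohomologies, run the long exact sequence to force $r(w_{>F}j^*A)=0$, and conclude by conservativity \ref{conservativityonab}. You are somewhat more explicit than the paper in two places --- you spell out the role of strictness from \ref{discuss:realizations} in the long exact sequence step (the paper just writes ``this forces''), and you track the regularity of $U$ needed for \ref{conservativityonab} --- but the architecture is identical.
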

\begin{proof}
	Let $n=F(t(K))$. Assume $r(A)\in {^wD}^{\le n}(X)$, the other case is similar. By \ref{spreading:truncation}, it will be enough to show that $\eta^*A\in DM^{Ab, \le n}(K)$, and hence enough to show that $j^*A\in DM^{Ab, \le n}(U)$. Hence replacing $X$ by $V$, we may as well assume that $F=n$ is the constant function.
	
	 Again, by \ref{spreading:truncation}, there is a $U\subset X$ such that $j^*A \in DM'(U)$. Hence we can write a triangle:
		\[
			w_{\le F}j^*A \rightarrow j^*A \rightarrow w_{> F}j^*A\rightarrow 
		\]
	Also by \ref{jpreservesweights} $^pH^i r(j^*A)$ has weights $\le F$.
	
	By \ref{weightsOfRealization}, restricting $U$ if needed, $^pH^i (r(w_{\le F}j^*A))$ has weights $\le F$ while $^pH^i (r(w_{> F}j^*A))$ has weights $>F$. Also, we have a long exact sequence of cohomology:
		\[
			\rightarrow {^pH^i} (r(j^*A)) \rightarrow {^pH^i} (r(w_{> F}j^*A))  \rightarrow {^pH^{i+1}} (r(w_{\le F}j^*A)) \rightarrow 
		\]
		but this forces that ${^pH^i} (r(w_{> F}j^*A))$ is of weights $\le F$ as well. In particular, it forces that ${^pH^i} (r(w_{> F}j^*A)) = 0$ for all $i$ and hence $r(w_{> F}j^*A) = 0$. 
		
		By restricting $U$ if needed and using continuity we can assume that $j^*A\in DM^{Ab}(U)$, since the same holds at the generic point. 
		
		We claim that $w_{>F}j^*A\in DM^{Ab}(X)$ as well. Now $j^*A$ is obtained by taking finitely many shifts, cones, and summands of motives $h_U(X)(-j)$ where $X/U$ is an abelian scheme. Since $w_{>F}$ preserves these operations, it is enough to verify the same for $A=h_U(X)(-j)$, in which case $w_{>F}$ is just a summand of $A$ and hence in $DM^{Ab}(X)$.
		
		Hence by conservativity \ref{conservativityonab} it follows that $w_{>F}j^*A = 0$. It follows that $j^*A\cong w_{\le F}j^*A$ as required.
\end{proof}

\begin{notation}\label{def:abtype}
	Call a proper map $\pi:Y\rightarrow X$ to be of \emph{abelian type} if for any (Zariski) point $x\hookrightarrow X$, the fibre $Y_x = \cup_{i\in I} Z_i$ in irreducible components (where $I$ is a finite indexing set), and such that for each $J\subset I$
	\begin{align*}
		Z_J:=\cap_{i\in J}Z_i\text{ is smooth s.t. }(\pi|_{Z_J})_*1_{Z_J}\in DM^{Ab}(x)
	\end{align*}
	If in addition, $(\pi|_{Z_J})_*1_{Z_J}\in DM^{Ab}_d(x)$, we say it is of \emph{abelian type of relative dimension $\le d$}. We are going to be particularly interested in the case $d=2$.
\end{notation}

\begin{lemma}\label{abelianTypeRestrictWell}
	Let $\pi:Y\rightarrow X$ be a proper map with $Y$ smooth. Let $\epsilon:\Spec L\rightarrow X$ be any Zariski point of $X$ such that $\pi |_{\pi^{-1}(\bar \epsilon)}$ is of abelian type of relative dimension $\le d$, where $\bar \epsilon$ is the Zariski closure of $\Spec L$. Then $\epsilon^*(\pi_*1_Y)$ and $\epsilon^!(\pi_*1_Y)$ lie in the category $DM^{Ab}_d(L)$. 
\end{lemma}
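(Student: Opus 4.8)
The plan is to reduce the computation of $\epsilon^*(\pi_*1_Y)$ and $\epsilon^!(\pi_*1_Y)$ to the fibre over $\bar\epsilon$ and then dévissage using the stratification of the fibre by the closed subsets $Z_J$. First I would use proper base change: if $\iota:\bar\epsilon\hookrightarrow X$ is the closed immersion of the Zariski closure of $\epsilon$, and $\pi_{\bar\epsilon}:\pi^{-1}(\bar\epsilon)\rightarrow\bar\epsilon$ is the base-changed map with $g:\pi^{-1}(\bar\epsilon)\hookrightarrow Y$ the closed immersion, then $\iota^*\pi_*1_Y\cong (\pi_{\bar\epsilon})_*g^*1_Y=(\pi_{\bar\epsilon})_*1_{\pi^{-1}(\bar\epsilon)}$; and since $Y$ is smooth, $\iota^!1_Y$ is a Tate twist and shift of $g^*1_Y$ by purity, so $\iota^!\pi_*1_Y\cong(\pi_{\bar\epsilon})_*1_{\pi^{-1}(\bar\epsilon)}(-c)[-2c]$ where $c=\mathrm{codim}(\bar\epsilon,X)$. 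Since $\epsilon^*=\epsilon_{\bar\epsilon}^*\circ\iota^*$ and $\epsilon^!=\epsilon_{\bar\epsilon}^*\circ\iota^!$ by the definition in \ref{gluing:situation}, and $DM^{Ab}_d(L)$ is stable under Tate twists and shifts, it suffices to show $(\pi_{\bar\epsilon})_*1_{\pi^{-1}(\bar\epsilon)}$ pulls back under $\epsilon_{\bar\epsilon}^*$ into $DM^{Ab}_d(L)$ — in other words, we have reduced to the case where $\epsilon$ is the generic point of $X$ and $\pi$ itself is of abelian type of relative dimension $\le d$.

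With that reduction in place, the core of the argument is a Mayer--Vietoris / inclusion--exclusion dévissage over the fibre $Y_\epsilon=\bigcup_{i\in I}Z_i$. For a closed cover of a scheme by finitely many closed subsets, $1_{Y_\epsilon}$ sits in a bounded complex (an iterated cone) whose terms are the pushforwards $(a_J)_*1_{Z_J}$ for $J\subset I$ nonempty, where $a_J:Z_J\hookrightarrow Y_\epsilon$; concretely one proves this by induction on $|I|$ using the gluing triangle $i_*i^!\to\mathrm{id}\to j_*j^*$ for the open–closed decomposition given by $Z_{i_0}$ versus its complement, together with excision. Pushing this forward along $\pi_\epsilon:Y_\epsilon\to\Spec L$ and using $(\pi_\epsilon)_*(a_J)_*1_{Z_J}=(\pi_\epsilon|_{Z_J})_*1_{Z_J}$, we see that $(\pi_\epsilon)_*1_{Y_\epsilon}$ lies in the triangulated subcategory generated by the objects $(\pi_\epsilon|_{Z_J})_*1_{Z_J}$, each of which lies in $DM^{Ab}_d(L)$ by the hypothesis that $\pi$ is of abelian type of relative dimension $\le d$ (here using crucially that each $Z_J$ is smooth, so $(a_J)_*$ behaves well and the hypothesis in \ref{def:abtype} applies verbatim). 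Since $DM^{Ab}_d(L)=\vvspan{S_d(L)}$ is a triangulated subcategory closed under summands, it is in particular closed under cones and shifts, so $(\pi_\epsilon)_*1_{Y_\epsilon}\in DM^{Ab}_d(L)$.

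I expect the main obstacle to be the bookkeeping in the dévissage step — writing down the iterated-cone presentation of $1_{Y_\epsilon}$ in terms of the $(a_J)_*1_{Z_J}$ with correct functoriality, and being careful that this is happening in $DM$ over the point $\Spec L$ rather than over $X$ (so that one is genuinely allowed to invoke \ref{def:abtype}, which is a pointwise condition). One technical point worth isolating: the hypothesis only controls $(\pi|_{Z_J})_*1_{Z_J}$ when $Z_J\neq\emptyset$; for $J$ with $Z_J=\emptyset$ the corresponding term vanishes, so there is nothing to check, but this should be stated explicitly. Everything else — proper base change, purity for the smooth $Y$, stability of $DM^{Ab}_d$ under the triangulated operations — is formal and can be cited from the six-functor formalism recalled in \ref{intro:dmx} and from the definition of $DM^{Ab}_d(L)$ in \S\ref{sec:overField}.
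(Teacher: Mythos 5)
The dévissage you sketch for $\epsilon^*(\pi_*1_Y)$ is fine and is essentially equivalent to the paper's: both reduce to the fibre and then induct over the normal-crossings stratification, using the localization triangles $j_!j^*\to\mathrm{id}\to i_*i^*$ to peel off smooth pieces. The precise packaging differs (you phrase it as a Čech/Mayer--Vietoris iterated cone over all $Z_J$; the paper inducts on the largest non-empty $k$-fold intersection, separating one component at a time), but these amount to the same induction.

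The genuine gap is in your reduction of $\epsilon^!(\pi_*1_Y)$ to $\epsilon^*(\pi_*1_Y)$. You claim that since $Y$ is smooth, $g^!1_Y$ is a Tate twist and shift of $g^*1_Y$ for $g\colon\pi^{-1}(\bar\epsilon)\hookrightarrow Y$. That is absolute purity, and it requires the closed immersion $g$ to be regular --- in practice, it requires $\pi^{-1}(\bar\epsilon)$ to be smooth. But $\pi^{-1}(\bar\epsilon)$ is precisely a normal-crossings--type fibre, which is not smooth; indeed the whole content of the hypothesis in \ref{def:abtype} is that it is merely a union of smooth pieces. (Concretely: $Y$ smooth gives $g^!1_Y\cong\omega_{\pi^{-1}(\bar\epsilon)}(-n)[-2n]$ where $\omega$ is the dualizing complex, and this is not $1(-c)[-2c]$ unless $\pi^{-1}(\bar\epsilon)$ is smooth.) So the $\epsilon^!$ case is not actually addressed by your argument. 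The paper avoids this by running a \emph{separate} dévissage for $f^!1_Y$ in parallel to the one for $f^*1_Y$, using the dual localization triangle $i_*i^!\to\mathrm{id}\to j_*j^*$, and only invokes purity in the base case where the remaining strata are genuinely smooth. Your Mayer--Vietoris resolution would have to be dualized accordingly (expressing the dualizing complex of $Y_\epsilon$ as an iterated cone of $(a_J)_!\omega_{Z_J}$'s, and only then using smoothness of each $Z_J$ to convert $\omega_{Z_J}$ to a twist of $1_{Z_J}$); as written the reduction step is invalid.

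A smaller point: the paper first shrinks $\bar\epsilon$ to an open $V$ over which the stratification spreads out with smooth proper $Z'_J/V$; you work directly over $\Spec L$. That is also legitimate (it is what the definition of abelian type gives you, and continuity bridges the gap), so this is a stylistic difference rather than an error --- but be explicit that the comparison between $\epsilon^*_{\bar\epsilon}(\pi_{\bar\epsilon})_*$ and $(\pi_\epsilon)_*$ over the point uses generic base change / continuity.
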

\begin{proof}
	Let $Y_\epsilon = \cup_{i\in I} Z_i$ and as before, let $Z_J:=\cap_{i\in J}Z_i$ for any $J\subset I$ finite. For simplicity of notation, in below we will let $\pi$ also denote projection from any subset of $Y$ to $X$. 

	Since smoothness is an open condition, we can replace $\epsilon$ by a neighbourhood $V\subset \bar \epsilon$ such the inclusion $V\hookrightarrow X$ is of finite type, and such that the pre-image $Y_V = \cup_{i\in I}Z'_i$ with $Z_J':=\cap_{i\in J}Z'_j$ is smooth, proper over $V$ for any subset $J\subset I$.
	Let $f:Y_V\hookrightarrow Y$ denote the inclusion of the normal crossing variety $Y_V$ (that is such that each irreducible component of $Y_V$ as well as their $k$-fold intersection is smooth). By base change we have
	\begin{align*}
		\epsilon^*(\pi_*1_Y) = \pi_* \epsilon^*f^*1_Y = \epsilon^*1_{Y_U} & & \epsilon^!(\pi_*1_Y) = \pi_*\epsilon^*f^!1_Y
	\end{align*}
	We will induct on the largest $k$ such that some $k$-fold intersection of $Z'_j$ is non-trivial (where we assume that $Y_V\subset Y$ is normal crossing and each $k$-fold intersection is proper and smooth over $V$) and show that $\epsilon^*\pi_*f^*1_Y$ (resp. $\epsilon^*\pi_*f^!1_Y$) is in $DM^{Ab}_d(L)$ as required. 
	
	\emph{(Base case)}: If $k=1$, then $Y_V$ is a disjoint union of smooth varieties $Z'_i$ and hence is smooth. Hence $f^!1_Y = 1_{Y_U}(-c)[-2c]$. Also $\pi_*\epsilon^*1_{Y_U} = \oplus_{i}\pi_*1_{Z_i}$  and hence the claim follows directly from assumption. 
	
	\emph{(Induction Step)}: Let $i:Y'_V\hookrightarrow Y_V$ be the closed subset formed by intersection of components inside $Y_V$ and let $j:W\hookrightarrow Y_V$ denote the open complement. Hence we have the exact triangles:
	\begin{align*}
		j_!j^*f^*1_Y\rightarrow f^*1_Y\rightarrow i_*i^*f^*1_Y\rightarrow& &i_*i^!f^!1_Y\rightarrow f^!1_Y\rightarrow j_*j^*f^!1_Y\rightarrow
	\end{align*}
	By induction, $\epsilon^*\pi_*i_*i^*f^*1_Y$ (resp. $\epsilon^*\pi_*i_*i^!f^!1_Y$) lies in $DM^{Ab}_d(L)$ (since we are then working with normal crossing variety $Y'_V$ which has at most $(k-1)$-fold intersections) and it is enough to show that $\epsilon^*\pi_*j_!j^*f^*1_Y$ (resp. $\epsilon^*\pi_*j_*j^*f^*1_Y$) does too. 
	
	But $W$ is a disjoint union $W=\sqcup_{i\in I}W_i$ with $j_i:W_i\hookrightarrow Z'_i$ open dense. Hence, we can as well assume that $W=W_i$, and work with one $i$ at a time, that is it is enough to show that $\epsilon^*\pi_*j_{i!}j_i^*f_i^*1_Y, \epsilon^*\pi_*j_{i*}j_i^*f_i^!1_Y \in DM^{Ab}_d(L)$, where $f_i:Z'_i\hookrightarrow Y$ denotes the immersion. 
	
	Let $g_i:T_i\hookrightarrow Z'_i$ denote the complement of $W_i$. We have triangles:
	\begin{align*}
		j_{i!}j_i^*f_i^*1_Y\rightarrow f_i^*1_Y\rightarrow g_{i*}g_i^*f_i^*1_Y\rightarrow& &g_{i*}g_i^!f_i^!1_Y\rightarrow f_i^!1_Y\rightarrow j_{i*}j_i^*f_i^!1_Y\rightarrow
	\end{align*}
	Now we apply $\epsilon^*\pi_*$ to the triangle. But $f_i$ is inclusion of a smooth variety and $g_i\circ f_i$ is inclusion of $T_i$ which is a normal crossing variety, with components as some sub-collection of $Z'_J$ and which has at most $(k-1)$-fold intersections. Hence it follows that 
	\[
		\epsilon^*\pi_*f_i^*1_Y, \epsilon^*\pi_*f_i^!1_Y, \epsilon^*\pi_*g_{i*}g_i^*f_i^*1_Y, \epsilon^*\pi_*g_{i*}g_i^!f_i^!1_Y \in DM^{Ab}_d(L)
	\]
	by induction. It follows that
	\[
		\epsilon^*\pi_*j_{i!}j_i^*f_i^*1_Y, \epsilon^*\pi_*j_{i*}j_i^*f_i^!1_Y \in DM^{Ab}_d(L)
	\]
	as well, as required.
\end{proof}

\begin{theorem}\label{tech:mainresult}
	Let $X$ be a variety and $\pi:Y\rightarrow X$ be a proper map with $Y$ smooth. Let $V\subset X$ be smooth such that $\pi:{\pi^{-1}V}\rightarrow V$ is an abelian scheme and $\pi|_{\pi^{-1}(X-V)}$ is of abelian type of relative dimension $\le 2$. Let $N$ be a summand of $h^n(X)$ such that the realization $r(N[-n])$ is a local system on $X(\C)$. Assume that the intermediate extension $j_{!*}N$ exists in the weaker sense of \ref{def:motivicIC}. Further assume that $r(j_{!*}N) = IC_X(\mathcal L)[n]$, the usual intersection complex (see \cite{BBD}) with coefficients in $\mathcal L$ (upto a shift).
	
	 Then $j_{!*}N$ exists in the stronger sense of \ref{richerIC}.
\end{theorem}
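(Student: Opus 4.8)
The plan is to verify condition (2$''$) of \ref{richerIC}: that the restriction map $\mathrm{End}(j_{!*}N)\rightarrow \mathrm{End}(N)$ admits a section. Since we already know $j_{!*}N$ exists in the weaker sense of \ref{def:motivicIC}, the kernel $K$ of this map is a nilpotent ideal, so it suffices to show $K=0$, equivalently that $j^*$ is faithful (hence, being full by \cite[1.7]{wildeshaus_ic}, fully faithful) on the relevant Chow motives. The strategy is to bound the weights of $j_{!*}N$ in the two directions using the Morel-style motivic $m$-structure $({}^wDM^{\le F},{}^wDM^{>F})$ for the constant function $F=\dim X + n$ (or rather $F=\dim X$ after shifting $N$ to weight $0$), and then to recognize $j_{!*}N$ as the motivic weight truncation $w_{\le F}j_*N$, which is characterized up to unique isomorphism.

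Here is the order of steps. First, set $d=\dim X$, replace $N$ by $N[-n]$ so $\mathcal L=r(N)$ is an honest local system (pure of some weight $w$; twist so $w=0$) and work with $j_*N\in DM(X)$. Second, use the hypothesis that $r(j_{!*}N)=IC_X(\mathcal L)[n]$ together with \ref{weightsOnIC:sheaves} to conclude that for every closed $i:Z\hookrightarrow X$ with $Z\ne X$ one has $i^*r(j_{!*}N)\in {}^wD^{\le d-1+n}(Z)$ and $i^!r(j_{!*}N)\in {}^wD^{\ge d+1+n}(Z)$; on $V$ itself $j^*j_{!*}N\cong N$ has realization a (shifted) local system of the correct weight. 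Third, apply the spreading-out machinery: since $\pi$ is an abelian scheme over $V$ and of abelian type of relative dimension $\le 2$ over $X-V$, Lemma \ref{abelianTypeRestrictWell} shows $\epsilon^*(\pi_*1_Y), \epsilon^!(\pi_*1_Y)\in DM^{Ab}_2(L)$ at every point; as $N$ is a summand of $h^n(X)=$ a summand of $\pi_*1_Y$ (up to the shift and the smooth total space), the same holds for $j_{!*}N$, so its stalks and costalks live in $DM^{Ab}_2$, where realization is conservative by \ref{conservativityonab}. Fourth, feed the weight bounds from step two through the converse spreading-out Lemma \ref{spreading:realization} (applied stratum by stratum, using Noetherian induction on $X$ and the gluing formalism \ref{gluing:mainresult}) to deduce that, stratum-locally, $i^*j_{!*}N\in {}^wDM^{\le F}$ and $i^!j_{!*}N\in {}^wDM^{>F}$ for the constant $F$; by the definition \ref{gluing:spreadingOut} of the glued $m$-structure on $D'(X)$ this exactly says
\[
    j_{!*}N\in {}^wDM^{\le F}(X), \qquad \mathrm{Cone}(j_{!*}N\rightarrow j_*N)\in {}^wDM^{>F}(X),
\]
the second inclusion because on $V$ the cone vanishes and off $V$ the cone is $i_*i^*j_*N$ matched against the costalk bound. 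Fifth, these two inclusions, together with the $m$-structure (in particular the uniqueness of truncation triangles coming from the $t$-structure part), identify $j_{!*}N$ with the weight truncation $w_{\le F}\, j_*N$ canonically; since truncations for an $m$-structure are functorial, $\mathrm{End}(j_{!*}N) = \mathrm{End}(w_{\le F} j_*N)$ and the map to $\mathrm{End}(j^*j_{!*}N)=\mathrm{End}(N)$ is induced by a functor, hence splits by restricting to the image — or more simply, nilpotence of $K$ plus faithfulness forced by conservativity of realization on the abelian-type subcategory kills $K$ outright. Either way condition (2$''$) holds.

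The main obstacle is step four: making the local-on-$X$ statements of \ref{spreading:realization} and \ref{weightsOfRealization} glue into a genuine membership $j_{!*}N\in {}^wDM^{\le F}(X)$ and a genuine membership of the cone in ${}^wDM^{>F}(X)$, i.e.\ checking the hypotheses of the gluing Proposition \ref{gluing:mainresult} — one must run a Noetherian induction over the stratification of $X$ induced by $V$ and by the loci where the fibres of $\pi$ change type, verify at each stratum that the stalk/costalk of $j_{!*}N$ lands in $DM^{Ab}_2$ (via \ref{abelianTypeRestrictWell}) so that $D'$ is defined there, and propagate the weight bounds of \ref{weightsOnIC:sheaves} through the realization back into $DM$ using conservativity; the compatibility of $i^!$ with the perverse/weight bookkeeping (\ref{jpreservesweights} and the strictness statement in \ref{discuss:realizations}) is what makes the cone land on the correct side of $F$, and getting the shifts and Tate twists to line up in the bound $k+2j+2(t(K)-t(L)) \ge F(t(L))-t(L)$ from \ref{spreading:truncation} is the delicate bookkeeping point.
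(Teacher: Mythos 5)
Your outline matches the paper's strategy in broad strokes (spreading out, reading off weight bounds from \ref{weightsOnIC:sheaves}, pushing through \ref{spreading:realization} and \ref{conservativityonab}, Noetherian induction), but the central step as you have written it has a genuine gap. You try to show
\[
  j_{!*}N\in {}^wDM^{\le F}(X)\quad\text{and}\quad \mathrm{Cone}(j_{!*}N\to j_*N)\in {}^wDM^{>F}(X)
\]
globally over $X$, and then to identify $j_{!*}N$ with $w_{\le F}j_*N$. But by \ref{gluing:spreadingOut}, membership in ${}^wDM^{\le F}(X)$ requires the stalk at \emph{every} point — in particular at the generic point $\eta$ of $X$ — to lie in $DM_2^{Ab}(K)$, where the $m$-structure of \ref{tstruct:DMAb} is defined. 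The hypothesis of the theorem only bounds the relative dimension of the fibres by $2$ over $X-V$; over $V$, $\pi$ is an arbitrary abelian scheme, and in the intended application to the Siegel sixfold the generic fibre is an abelian threefold, so $\eta^*j_{!*}N\cong N_\eta = h^n(Y_\eta)\notin DM_2^{Ab}(K)$. The object $j_*N$ is therefore \emph{not} in $D'(X)$, $w_{\le F}j_*N$ is not defined, and the global containment you write down fails. The ``or more simply'' fallback does not help either: conservativity of $r$ on $DM^{Ab}$ controls isomorphisms, not kernels of $\mathrm{End}$-maps (faithfulness), so it does not ``kill $K$ outright.''

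The paper sidesteps this by never asking the $m$-structure to do anything over $V$. Writing the triangle $j_{!*}N\to j_{V*}N\to B\to$, one observes that $j_V^*B=0$ so $B=i_{V*}C$ with $C=i_V^*B=i_V^!j_{!*}N[1]$, and then
\[
\hom(j_{!*}N,B[i])=\hom(i_V^*j_{!*}N,C[i]).
\]
This reduces everything to a Hom-vanishing \emph{on $X-V$}, where the relative-dimension-$\le 2$ hypothesis, \ref{abelianTypeRestrictWell}, \ref{spreading:truncation}, \ref{weightsOnIC:sheaves}, \ref{spreading:realization} and the orthogonality part of \ref{gluing:mainresult} all apply. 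Showing $i_V^*j_{!*}N\in {}^wDM^{\le d+n}(X-V)$ and $C\in {}^wDM^{>d+n}(X-V)$ then gives $\hom(j_{!*}N,B[i])=0$ for all $i$, hence $\mathrm{End}(j_{!*}N)\to\mathrm{End}(N)$ is an isomorphism. No global truncation identity $j_{!*}N\cong w_{\le F}j_*N$ is needed or asserted. If you restructure step four to establish only the two restricted memberships on $X-V$ and then conclude via $\hom(i_V^*j_{!*}N,C[i])=0$, the rest of your ingredients fall into place.
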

\begin{proof}
	Let $d=\dim X$. Let $j_V:V\subset X$ be regular such that if $Y_V:=\pi^{-1}V, \pi_{Y_V}$ is smooth, proper. By definition $\pi_*1_Y$ is of weight $0$ in $DM(X)$. Since $N$ is a summand of $j_V^*(\pi_*1_Y)$, it follows by \ref{ICissummandofchow} $j_{!*}N$ is a summand of $\pi_*1_Y$. There is a triangle:
		\begin{equation}\label{eq:IM}
			j_{!*}N	\rightarrow j_{V*}N	\rightarrow B\rightarrow 
		\end{equation}
	Therefore we only need to show that the natural map $End(j_{!*}N)\rightarrow End(N)$ is an isomorphism. Now, since $j_*$ is functorial, it will be enough to show that
	\begin{align*}
		\hom(j_{!*}N, B) = 0& &\hom(j_{!*}N, B[-1]) = 0
	\end{align*}
	
\newcommand{\dminus}{d}
	Let $i_V:(X-V)\hookrightarrow X$.
	We claim that $i_V^*j_{!*}N \in {^wDM^{\le \dminus+n}}(X-V)$ while $B = i_{V*}C$ and $C \in {^wDM^{>\dminus+n}}(X-V)$. It follows that $i_V^*j_{!*}N = w_{\le \dminus+n}i_V^*j_{V*}N$, the truncation for the $t$-structure in \ref{define:motivicMorelTStructure} (for the constant function $\dminus+n$) and the result would then follow from the vanishing
	\[
		\hom(j_{!*}N, B[i]) = \hom(j_{!*}N, i_{V*}C[i]) = \hom(i_V^*j_{!*}N, C[i]) = 0, \forall i\in \Z
	\]
	
	\emph{To show that $i_V^*j_{!*}N\in {^wDM^{\le \dminus+n}}$:}
		Let $A:=j_{!*}N$. Let $j:U\subset X$ be an open set such that $i^*j^*A \in {^wDM^{\le \dminus+n}}(U\cap (X-V))$ where $i:U\cap (X-V)\hookrightarrow U$ denotes the natural immersion. We will do a Noetherian induction on the complement $i:Z:=X-U\hookrightarrow X$ (starting with $Z=X-V$).

%		\emph{Base case:} We have $j_V^*(A)=N\in {^wDM^{\le \dminus+n}}(V)$ by definition, and this starts the induction process.
		
%		\emph{Induction Step:} 
		We saw that $A$ is a summand of $\pi_*1_Y$. Hence $i^*(A)$ is a summand of $i^*\pi_*1_Y$. Therefore by \ref{abelianTypeRestrictWell} $\eta^*(A) \in DM^{coh, Ab}_2(\eta)$ for any generic point $\eta\in Z$, and hence by \ref{spreading:truncation} there is a $j_W:W\subset Z$ open such that $j_W^*(A)\in DM^{Ab}_2(W)$.
		
		By \ref{weightsOnIC:sheaves}, $r(i^*j_W^*(A)) \in {^wD^{\le \dminus + n}}$. Hence, by shrinking $W$ if necessary, by \ref{spreading:realization}, $i^*j_W^*(A)\in {^wDM^{\le \dminus + n}}$. Therefore by definition $j':U':=X-(Z-W) \subset X$ open, $i^*j^{'*}IC_X \in {^wDM^{\le \dminus+n}}(U')$ and we are done since $Z-W$ is a proper subset of $Z$.
		
	\emph{To show that $B =i_*C$:}
	 We have $j_V^*(B)=0\in {^wDM^{> \dminus+n}}(V)$ since $j^*_Vj_{!*}N\cong N$. Hence $B=i_{V*}i_V^*B$ using localization and we let $C=i_V^*B$. %This starts the induction process.
	
	\emph{To show that $C\in {^wDM^{> \dminus+n}}$:}
		Let $j:U\subset X-V$ be an open set such that $j^*C \in {^wDM^{>\dminus+n}}(U)$ we will do a Noetherian induction on the complement $i:Z:=(X-V)-U\hookrightarrow X-V$ as before (beginning with $Z=X-V$ as before).

%		\emph{Base case:}		
%		\emph{Induction Step:} 
		Now $A$ is a summand of $\pi_*1_Y$. %If $i':(X-V)\rightarrow V$ denotes the complement of $V$, then 
		By \eqref{eq:IM}, $C=i_V^{!}B = {i_V^!}A[1]$ is a summand of $i_V^{!}\pi_*1_Y[1]$. Since $Z\subset X - V$ we conclude that $i^!C$ is a summand of $i^!\pi_*1_Y[1]$. Now the proof works using realizations and conservativity, ditto as it did for $A$.
%%		Therefore by \ref{abelianTypeRestrictWell} $\eta^!(B) = \eta^!i^!(B) \in DM^{coh, Ab}_2(\eta)$ for any generic point $\eta\in Z$, and hence by TODO there is a $j_W:W\subset Z$ open such that $j_W^*i^!(B)\in DM^{coh, Ab}_2(W)$, and hence $r(j_W^*(i^!(B))) \in D^{coh, Ab}_2(W)$. By TODO, $r(j_W^*i^!(B)) = r(j_W^*i^!(IC_X[1])) \in {^wD^{> \dminus+n}}$. Hence, by shrinking $W$ if necessary, by TODO, $j_W^*(i^!(B))\in {^wDM^{\le \dminus+n}}$. Therefore, it follows that for $j':U':=X-(Z-W) \subset X$ open, $j^{'*}B \in {^wDM^{> \dminus+n}}(U')$ by definition and we are done. 
\end{proof}
\begin{remark}\label{DcouldBeSmaller}
	We are not using \ref{weightsOnIC:sheaves} in it's full strength in above. It is clear from the proof that instead of the constant function $d+n$ we could have used any monotone step function $F$ with
	\begin{align*}
		F(d) = d+n& &d+n-1\le F(i)\le d+n+1\text{ for }i<d.
	\end{align*}
	and this is consistent with the results in mixed sheaves (see, for example, \cite[4.2.4]{vaish2017weight}).
\end{remark}

%%\begin{corollary}
%%	If $X$ has a resolution of abelian type of relative dimension $\le 2$ then the intersection motive $IM_X:=j_{!*}1_U$ exists in the stronger sense of \ref{richerIC}.
%%\end{corollary}

\subsection{Shimura varieties}\label{sec:shimura} We summarize the consequences of our main result in the context of Shimura varieties.

In below we summarize the key points of the discussion in \cite[\S 8]{wildeshaus_shimura_2012}:
\begin{para}\label{shimura:situation}
	Assume that $(P,\mathfrak X)$ is a mixed Shimura datum (in particular $G$ is a connected linear algebraic group over $\Q$). Then, associated to any open compact subgroup $K\subset P(\mathbb A_f)$ there is an associated quasi projective varieties $M^K:=M^K(P,\mathfrak X)$ whose complex points can be identified with:
	\[
		M^K(\C) = P(\Q)\backslash (\mathfrak X\times P(\mathbb A_f))/K).
	\]
	In fact the variety $M^K$ can be defined over the reflex field $E(P,\mathfrak X)$ which is a number field in general. For simplicity we will assume $K$ to be neat. 
	
	Following \cite[\S 8]{wildeshaus_shimura_2012} we assume the following condition on $(P,\mathfrak X)$:
	\begin{itemize}
		\item[(+)] If $G$ is the maximal reductive quotient of $P$, then the connected component of identity $Z(G)^0$ of the centre $Z(G)$ is, up to isogeny, product of a $\Q$ split torus with a torus of compact type (that is whose $\R$ points are compact). 
	\end{itemize}
	
	Let $(G,\mathfrak h)$ is the pure Shimura datum underlying $(P,\mathfrak X)$ (that is $(P,\mathfrak X)/W = (G,\mathfrak h)$ in the sense of \cite[2.9]{pink1990arithmetical}) and fix $L\subset G(\mathbb A_f)$ compact open. Then $M^L(G,\mathfrak h)$ admits a Baily-Borel compactification, $(M^L)^*$ which comes with a natural stratification (see \cite[Chap 6]{pink1990arithmetical}). 
	
	Furthermore, $M^K(P,\mathfrak X)$ comes equipped with a Toroidal compactification $(M^K)^\Sigma$, for a good choice of ``cone decomposition'' $\Sigma$ and this also comes with a natural stratification (\cite[Chap 7]{pink1990arithmetical}). 
	
	The morphism of Shimura data $(P,\mathfrak X)\rightarrow (G,\mathfrak h)$ gives rise to a map $M^K\rightarrow M^L$ which in turn extends to a map 
	\[
		\pi: (M^K)^\Sigma\rightarrow (M^L)^*
	\]
	There are good conditions on $\Sigma$ such that the map is projective. Also, the map is stratified for natural stratifications on the two strata.
	
	By \cite[8.4]{wildeshaus_shimura_2012}, the natural map between the stratifications factorizes as:
	\[
		\pi|_T:T\overset{\pi''}\longrightarrow B\overset{\pi'}\longrightarrow S
	\]
	where $S\subset (M^L)^*$ and $T\subset (M^K)^\Sigma$ are strata under the natural stratification above, with $\pi''_*1_T$ a mixed Tate motive over $B$ (that is in the category $DMT(B)$ in the sense of \cite[4.3]{wildeshaus_shimura_2012}), $\pi'$ is proper smooth and fibrewise, on geometric points of $S$, $B/S$ is isomorphic to a disjoint union of abelian varieties.
	Under assumption $(+)$, $T$ is the variety associated to certain Shimura datum, while $S$ is a finite quotient of the variety associated to a pure Shimura datum. Both $T$ and $S$ are defined over the reflex field associated to $(P,\mathfrak X)$ and are smooth. 
	
	In particular the map $\pi$ is of Abelian type. Furthermore it is of relative dimension $\le d$ if the relative dimension of $B/S$ is $\le d$.
\end{para}

\begin{para}\label{notn:B}
	We can compute dimension of $B$ as follows. Fix a proper admissible subgroup $Q$ of $P$ with associated normal subgroup $P_1$ as in \cite[4.7]{pink1990arithmetical}. Let $W_1$ be unipotent radical of $P_1$, $U_1\subset W_1$ denote the centre of $W_1$ (that is, it is the ``weight $(-2)$'' part of $P_1$). 
	
	To each stratum $T$ we can associate $\sigma\times {p}$ with $\sigma$ a rational polyhedral cone inside $U_1(\R)(-1)$ and $p\in P(\mathbb A_f)$. In particular, associated to each such $\sigma$, we can find an algebraic subgroup $\langle \sigma \rangle \subset U_1$ and then the group appearing in the mixed Shimura datum corresponding to $T$ is $P_{1,[\sigma]}:=P_1/\langle \sigma \rangle$.  It's unipotent radical is $W_{1,[\sigma]}=W_1/\langle\sigma\rangle$ and it's ``weight $(-2)$'' part is is $U_{1,[\sigma]}:=U_1/\langle\sigma\rangle$. 
	
	Then (up to a further finite quotient) the map $\pi|_T$ corresponds to map of the mixed Shimura variety to it's pure part and hence relative dimension of $B/S$ is the dimension of $V_1 := W_{1,[\sigma]}/U_{1,[\sigma]} = W_1/U_1$ (complex dimension, that is dimension of $\frac 1 2\dim_{\R} V_1(\R)$).
\end{para}

Then the following result is an immediate corollary of \cite[0.1 and 0.2]{wildeshaus_shimura_2012} and \ref{tech:mainresult}:
\begin{theorem}\label{shimura:tech:mainresult}
	Let $(G,\mathfrak X)$ be a pure Shimura datum and follow the notation of \ref{notn:B}. If for any proper admissible subgroup $Q$, the corresponding $V_1$ has complex dimension $\le 2$, then there is a motivic intersection complex $IM_X$ in the stronger sense of \ref{richerIC} for $X=M_K^*$, the Baily-Borel compactification of corresponding Shimura variety. 
\end{theorem}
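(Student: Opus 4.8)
\emph{The plan.} The statement is a specialization of Theorem~\ref{tech:mainresult} to the proper map furnished by a toroidal compactification, so I would: (i)~exhibit the pair $(\pi:Y\to X,\ V\subset X)$ required in \ref{tech:mainresult}; (ii)~read off from \ref{shimura:situation}--\ref{notn:B} that $\pi|_{\pi^{-1}V}$ is an abelian scheme and $\pi|_{\pi^{-1}(X-V)}$ is of abelian type of relative dimension $\le 2$; (iii)~quote \cite[0.1 and 0.2]{wildeshaus_shimura_2012} for the two facts that \ref{tech:mainresult} takes as input, namely that the weak intermediate extension $j_{!*}N$ of \ref{def:motivicIC} exists for the Baily--Borel compactification and that its $l$-adic realization is the topological intersection complex; and (iv)~apply \ref{tech:mainresult}.

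\emph{Constructing the data.} Under the standing condition $(+)$ of \ref{shimura:situation}, put $X:=M_K^*$ (defined over the reflex field, which has characteristic $0$) and let $j:V\hookrightarrow X$ be the inclusion of the open Shimura variety $V:=M^L(G,\mathfrak X)$, which is smooth because the level is neat. Now apply the construction of \ref{shimura:situation} with the \emph{mixed} Shimura datum taken to be the pure datum $(G,\mathfrak X)$ itself: for a good projective cone decomposition $\Sigma$ this produces (via Pink's theory) a smooth toroidal compactification $Y:=(M^L)^\Sigma$ together with a proper, stratified map $\pi:Y\to X$ which is an isomorphism over $V$. Hence $\pi^{-1}V\to V$ is the identity --- an abelian scheme of relative dimension $0$ --- and we take $N:=1_V$, the only summand of $\pi_*1_{\pi^{-1}V}$; its realization is the constant local system, so $n=0$, $\mathcal L=\underline{\Q}_l$, and $j_{!*}N=IM_X$.

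\emph{The boundary and conclusion.} By the last paragraph of \ref{shimura:situation} the restriction of $\pi$ to each boundary stratum factors as $T\xrightarrow{\pi''}B\xrightarrow{\pi'}S$ with $\pi''_*1_T$ mixed Tate over $B$ and $\pi'$ proper smooth whose geometric fibres are disjoint unions of abelian varieties of complex dimension $\dim_{\C}V_1$ for the relevant admissible $Q$ (computed in \ref{notn:B}); since the toroidal boundary is a normal crossings divisor, the components of each fibre and their intersections are smooth, so the conditions of \ref{def:abtype} are met, and \ref{shimura:situation} then gives that $\pi$ --- and hence $\pi|_{\pi^{-1}(X-V)}$ --- is of abelian type of relative dimension $\le\max_Q\dim_{\C}V_1$. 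The hypothesis $\dim_{\C}V_1\le 2$ is exactly what makes this $\le 2$; this is the only place the bound is used, mirroring the $d\le 2$ restriction in \ref{tech:mainresult}. Finally \cite[0.1 and 0.2]{wildeshaus_shimura_2012} supply $j_{!*}N$ in the sense of \ref{def:motivicIC} together with $r(j_{!*}N)=IC_X(\underline{\Q}_l)[0]$, so all hypotheses of \ref{tech:mainresult} hold and it yields $j_{!*}N=IM_X$ in the stronger sense of \ref{richerIC}. I do not expect a genuine obstacle here: the only non-formal step is checking that neat level and a good $\Sigma$ simultaneously furnish a smooth $Y$, a proper stratified $\pi$, and a boundary matching \ref{def:abtype}, and this translation between Pink's description and \ref{def:abtype} is already packaged in \ref{shimura:situation}.
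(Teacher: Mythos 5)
Your proposal reproduces the paper's argument: the paper derives \ref{shimura:tech:mainresult} precisely by taking $Y=(M^K)^\Sigma\to X=(M^K)^*$ for a good $\Sigma$, reading off from \ref{shimura:situation}--\ref{notn:B} that the boundary is of abelian type of relative dimension $\le 2$ under the hypothesis on $V_1$, and then quoting \cite[0.1, 0.2]{wildeshaus_shimura_2012} for the weak intermediate extension and its realization so that \ref{tech:mainresult} applies. Your write-up is correct and simply makes explicit the hypothesis-checking that the paper compresses into the phrase ``immediate corollary''.
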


In special cases we have the following:

\begin{corollary}\label{shimura3folds}
	Let $(G,\mathfrak X)$ is a pure Shimura datum and such that the dimension of the corresponding Shimura variety $M^K$ (for any fixed open compact $K\subset G(\mathbb A_f)$) is $\le 3$, then the intersection motive corresponding to the Baily-Borel compactification $(M^K)^*$ of $M^K$ exists in the sense of \ref{richerIC}.
\end{corollary}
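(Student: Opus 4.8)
The plan is to deduce this directly from Theorem~\ref{shimura:tech:mainresult}. Since $(G,\mathfrak X)$ is pure it is its own underlying pure datum, so \ref{shimura:tech:mainresult} applies once its hypothesis is checked: one must show that for every proper admissible subgroup $Q\subset G$ the space $V_1=W_1/U_1$ of \ref{notn:B} has complex dimension $\le 2$. (All the compactification data, the factorisation of the boundary maps and the standing hypothesis~$(+)$ are already provided by \ref{shimura:situation} and \ref{notn:B}.) Before that one makes the usual reduction to $K$ neat: replacing $K$ by a neat normal subgroup $K'$ and taking $K/K'$--invariants is harmless, since $\dim M^K$ does not depend on the level and the defining property of \ref{richerIC} -- injectivity of $End(j_{!*}N)\to End(N)$ -- is inherited by direct summands, $IM_{(M^K)^*}$ being cut out as a summand of the finite pushforward of $IM_{(M^{K'})^*}$.

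To bound $\dim_{\C}V_1$, recall from \ref{shimura:situation} and \cite{pink1990arithmetical} that the stratum $T$ of a toroidal compactification $(M^K)^\Sigma$ attached to (the cone associated with) a proper admissible $Q$ fits into a factorisation
\[
	\pi|_T\colon\quad T\ \xrightarrow{\ \pi''\ }\ B\ \xrightarrow{\ \pi'\ }\ S,
\]
with $\pi''$ surjective and $\pi'$ smooth, proper, its geometric fibres disjoint unions of abelian varieties; by \ref{notn:B} the relative dimension of the abelian scheme $B/S$ is exactly $\dim_{\C}V_1$. Hence
\[
	\dim_{\C}V_1\ =\ \dim B-\dim S\ \le\ \dim B\ \le\ \dim T .
\]

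On the other hand $(M^K)^\Sigma$ is a compactification of $M^K$, so $\dim(M^K)^\Sigma=\dim M^K\le 3$; and $Q$ being proper, $T$ is contained in the boundary $(M^K)^\Sigma\setminus M^K$, which is a proper closed subset (a normal crossing divisor, for a good choice of $\Sigma$). Therefore $\dim T\le\dim M^K-1\le 2$, whence $\dim_{\C}V_1\le 2$ by the previous display. As $Q$ ranged over all proper admissible subgroups, the hypothesis of Theorem~\ref{shimura:tech:mainresult} is verified and we obtain $IM_{(M^K)^*}$ in the sense of \ref{richerIC}, as claimed.

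There is no essential obstacle beyond Theorem~\ref{shimura:tech:mainresult} itself: the corollary amounts to a dimension count, and the count is elementary precisely because the boundary strata of a toroidal compactification of a threefold are at most surfaces, forcing the abelian parts of the Baily--Borel boundary to have relative dimension $\le 2$. The single step that deserves to be spelled out -- the one place where anything beyond bookkeeping is used -- is the descent of the characterisation \ref{richerIC} along the finite covering $M^{K'}\to M^K$ in the reduction to neat level; this is routine but should be recorded.
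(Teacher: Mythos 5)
Your proof is correct and follows essentially the same route as the paper, which simply observes that in the threefold case the boundary of any toroidal resolution has dimension $\le 2$, hence so does the abelian part $V_1$, and then invokes Theorem~\ref{shimura:tech:mainresult}. Your version is more explicit (spelling out the chain $\dim_{\C}V_1\le\dim B\le\dim T\le\dim M^K-1\le 2$ and recording the harmless reduction to neat level, which the paper dispenses with by its standing assumption in \ref{shimura:situation}), but the argument is the same.
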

\begin{proof}
	In this case dimension of the boundary itself for any resolution is $\le 2$, and hence so is the corresponding abelian part. Thus we are in the situation of \ref{shimura:tech:mainresult}.
\end{proof}

\begin{remark}
	It is clear from \ref{h:oldisnew} that the construction here coincides with the construction in \cite[\S 4.2]{vaish2017weight} (using \ref{DcouldBeSmaller} to compare with $F$ in \cite[4.2.2]{vaish2017weight}) and hence we have shown that, in the case of Baily-Borel compactification of Shimura three-folds, the motivic intersection complex constructed in \cite[4.2.2]{vaish2017weight} coincides with the one constructed in \ref{shimura3folds} and is actually an intermediate extension in the stronger sense of \ref{richerIC}.
\end{remark}

\begin{remark}\label{result:4folds}
	The constructions here will also be valid for several Shimura varieties of dimension $4$ -- for any resolution of singularities only fibres over zero dimensional strata can have dimension $3$. Thus the methods here are not applicable only if the fibres over zero dimensional strata are purely abelian. This does happen for the group $U(n,1)$ (see \cite[12.22]{pink1990arithmetical}) but this should be essentially the only example in dimension four (when $G^{der}$ is assumed to be absolutely simple).%If $G^{der}$ is not isogenous to a product, this is only true for Picard fourfolds, where the motivic intersection complex is already known.
\end{remark}

It is possible to work with certain higher dimensional Shimura varieties as well. For example, for the Siegel six fold, the abelian variety pieces that appear in the boundary of a toroidal resolution are of dimension $\le 2$ and hence \ref{shimura:tech:mainresult} gives: 

\begin{corollary}\label{siegel6}
	Let the Shimura datum correspond to the Siegel six fold, that is let $G = Sp(6)$ and let $\mathfrak X=\{ A \text{ a }3\times 3\text{ complex matrix }\big| A^t=A, \text{Im}\;A\text{ is positive definite}\}$.
	 Then for any open compact $K\subset G(\mathbb A_f)$, the  Baily-Borel compactification $(M^K)^*$ for the corresponding variety $M^K$ has the motivic intersection complex in the stronger sense of \ref{richerIC}. 
\end{corollary}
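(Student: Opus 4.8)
The plan is to deduce the corollary directly from Theorem \ref{shimura:tech:mainresult} by checking its numerical hypothesis for $G = Sp(6)$, the symplectic group of the standard symplectic space $\Q^6$ (so $g=3$ and $\dim_\C M^K = 6$). By that theorem, combined with the dimension bookkeeping of \ref{notn:B}, it suffices to show that for every proper admissible $\Q$-subgroup $Q$ of $Sp(6)$ --- equivalently, for every rational boundary component of the Siegel sixfold --- the associated ``weight $(-1)$'' group $V_1 = W_1/U_1$ has complex dimension $\le 2$.

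First I would enumerate the rational boundary components. Up to conjugacy, the maximal $\Q$-parabolic subgroups of $Sp(6)$ are the stabilizers $P_r$ of the isotropic subspaces $F \subset \Q^6$ with $\dim_\Q F = r$, for $r \in \{1,2,3\}$; the Levi quotient of $P_r$ is $GL(F) \times Sp(F^\perp/F)$, with $F^\perp/F$ symplectic of dimension $2(3-r)$. Following the recipe of \ref{notn:B} (equivalently, the classical description of degenerations of principally polarized abelian varieties), the normal subgroup $P_1$ attached to $P_r$ has unipotent radical $W_1$ a two-step vector group whose centre is the ``weight $(-2)$'' part $U_1 \cong \mathrm{Sym}^2 F$, and whose quotient is the ``weight $(-1)$'' part
\[
	V_1 \;=\; W_1/U_1 \;\cong\; F \otimes (F^\perp/F),
\]
so that $\dim_\Q V_1 = r\cdot 2(3-r)$. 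Consequently the number relevant in \ref{notn:B}, namely the relative dimension of the abelian scheme $B/S$ over the corresponding stratum $S$ (which is a finite quotient of the Siegel variety attached to $Sp(F^\perp/F)$), is
\[
	\dim_\C V_1 \;=\; \tfrac12 \dim_\R V_1(\R) \;=\; r(3-r).
\]

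Evaluating the three cases: $r=1$ gives $1\cdot 2 = 2$, $r=2$ gives $2\cdot 1 = 2$, and $r=3$ gives $3\cdot 0 = 0$. In every case $\dim_\C V_1 \le 2$, so the hypothesis of \ref{shimura:tech:mainresult} holds and the motivic intersection complex $IM_{(M^K)^*}$ in the strong sense of \ref{richerIC} follows at once. The one delicate point will be the identification of $V_1$ in the previous paragraph: one must isolate precisely the ``weight $(-1)$'' graded piece of the unipotent radical of $P_1$ --- not its centre $U_1$, and not all of $W_1$ --- for each of the three parabolics, and keep straight the passages between the $\Q$-dimension of the vector group, the $\R$-dimension of its real points, and the ``complex dimension'' $\tfrac12 \dim_\R V_1(\R)$ occurring in \ref{notn:B}. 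This is entirely standard --- it is exactly \ref{notn:B} specialised to $Sp(6)$, and agrees with the structure of the boundary of the Siegel modular variety under semiabelian degeneration --- so I anticipate no real obstacle; granting the three computations, the corollary is immediate from \ref{shimura:tech:mainresult}.
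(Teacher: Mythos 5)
Your proof is correct and takes the same route as the paper: the paper deduces the corollary from Theorem \ref{shimura:tech:mainresult} after asserting (without explicit computation) that the abelian variety pieces appearing in the boundary of a toroidal resolution of the Siegel sixfold have dimension $\le 2$. Your explicit enumeration of the maximal $\Q$-parabolics $P_r$ of $Sp(6)$ and the computation $\dim_\C V_1 = r(3-r) \in \{2,2,0\}$ for $r\in\{1,2,3\}$ is precisely the verification the paper leaves to the reader.
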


In some special cases it is also possible to construct intermediate extension of local systems. For example, using \cite[0.3]{wildeshaus_shimura_2012} and \ref{tech:mainresult}, and using that the fibres in the boundary of a toroidal compactification of the universal Abelian scheme are of dimension strictly less than the relative dimension of the universal Abelian scheme, we have the corollary:

\begin{corollary}\label{siegel6local}
	 Assume the situation of \ref{siegel6}. Further assume that $\pi:A\rightarrow M^K$ denotes the universal abelian scheme (which exists for good choices of $K$). Then for any summand $N$ of $h^n(A)$ for any $n$ such that $r(N)[-n]$ is a local system on $M^K(\C)$, $j_{!*}N$ is defined in the stronger sense of \ref{richerIC}. 
\end{corollary}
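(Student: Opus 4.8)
The plan is to realize the situation of Corollary \ref{siegel6local} as an instance of Theorem \ref{tech:mainresult}, taking for $Y$ a toroidal compactification of the universal abelian scheme and for $X$ the Baily--Borel compactification $(M^K)^*$, with the geometric and weak-existence inputs supplied by \cite[0.3]{wildeshaus_shimura_2012}.

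First I would fix the geometry. The universal abelian scheme $\pi\colon A\to M^K$ is itself the variety attached to a mixed Shimura datum whose underlying pure datum is $(Sp(6),\mathfrak X)$ (cf. \cite{pink1990arithmetical}); as $Z(Sp(6))^0$ is trivial, condition $(+)$ of \ref{shimura:situation} holds. Choosing a neat level and a smooth projective cone decomposition one obtains a smooth toroidal compactification $\bar A$ together with a projective stratified morphism $\bar\pi\colon \bar A\to (M^K)^*$ extending $\pi$, with $\bar\pi^{-1}(M^K)=A$ and $\bar\pi|_A=\pi$ an abelian scheme. For any summand $N$ of $h^n(A)=\wedge^n h^1_{M^K}(A)$ the realization $r(N)[-n]$ is automatically a local system on $M^K(\C)$, since $\pi$ is smooth and proper, so that hypothesis of \ref{tech:mainresult} is vacuous here.

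Next I would invoke Wildeshaus's construction: by \cite[0.3]{wildeshaus_shimura_2012}, for a summand $N$ of $h^n(A)$ the motive $N$ is of weight $0$ in $DM(M^K)$, the intermediate extension $j_{!*}N\in DM((M^K)^*)$ exists in the weaker sense of \ref{def:motivicIC}, and $r(j_{!*}N)=IC_{(M^K)^*}(\mathcal L)[n]$ with $\mathcal L=r(N)[-n]$. What remains is to check that $j_{!*}N$ restricts, over the generic point $\eta$ of each stratum of $(M^K)^*$, into the category $DM^{Ab}_2(\eta)$; this is the abelian-type input that the proof of \ref{tech:mainresult} actually requires. Here one uses that a genuine boundary degeneration of the universal abelian threefold has positive torus rank $r$, so its abelian quotient $E$ is an abelian variety of dimension $3-r\le 2$; the limit of $h^1_{M^K}(A)$ is an extension of $h^1(E)$ by a Tate motive, and $h^n=\wedge^n h^1$ contributes only the exterior powers $\wedge^\bullet h^1(E)$ (never symmetric powers), each of which is a summand of $h(E)$ and hence lies in $DM^{Ab}_2$.

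Finally I would run the argument of Theorem \ref{tech:mainresult} with $X=(M^K)^*$, $Y=\bar A$, $V=M^K$ and this $N$: in that proof the only use of the hypotheses on $\bar\pi$ is to establish exactly the restriction property of $j_{!*}N$ just discussed, so the argument goes through verbatim and yields $j_{!*}N$ in the stronger sense of \ref{richerIC}, as asserted. The main obstacle, and the only step beyond formal bookkeeping, is the paragraph above: controlling the degeneration of the coefficient motive at the boundary and keeping it inside $DM^{Ab}_2$. What makes this delicate rather than automatic is the asymmetry between wedge and symmetric powers --- $h^n$ of an abelian scheme involves only $\wedge^\bullet h^1$, whereas $\bar\pi_*1_{\bar A}$ itself sees toric bundles over products $E\times E$ and hence $\mathrm{Sym}^2 h^1(E)$, which for $E$ of dimension $2$ does not lie in $DM^{Ab}_2$ --- so the method genuinely depends on working with the individual Chow--K\"unneth summands $N$ rather than with the whole pushforward.
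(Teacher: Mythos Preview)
Your overall plan coincides with the paper's one-sentence proof: invoke \cite[0.3]{wildeshaus_shimura_2012} for the existence of $j_{!*}N$ in the weak sense of \ref{def:motivicIC}, and then apply Theorem~\ref{tech:mainresult} with $X=(M^K)^*$, $V=M^K$ and $Y=\bar A$ a smooth projective toroidal compactification of the universal abelian scheme. The paper simply records that the boundary fibres have dimension strictly less than the relative dimension~$3$ of the universal abelian scheme and leaves it at that.

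The substantive addition in your write-up is the final paragraph, where you argue that the hypothesis ``$\bar\pi|_{\bar A\setminus A}$ is of abelian type of relative dimension $\le 2$'' may fail because $\bar\pi_*1_{\bar A}$ sees $\mathrm{Sym}^2 h^1(E)$, and propose to replace that hypothesis by a direct verification that $\eta^*(j_{!*}N)\in DM^{Ab}_2(\eta)$ for each boundary generic point~$\eta$. You are right that this is precisely what the proof of \ref{tech:mainresult} uses the abelian-type hypothesis for (via \ref{abelianTypeRestrictWell}). But your verification is not a motivic one: you compute the \emph{limit} of $h^n_{M^K}(A)$ in the sense of degeneration of variations of Hodge structure and observe that it is built from $\wedge^\bullet h^1(E)$ with $\dim E\le 2$. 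That describes a realization, not the motive $\eta^*(j_{!*}N)$ itself. In the proof of \ref{tech:mainresult} the only handle on $\eta^*(j_{!*}N)$ \emph{as a motive} is that $j_{!*}N$ is a summand of $\bar\pi_*1_{\bar A}$ (Lemma~\ref{ICissummandofchow}); once you concede that $\eta^*\bar\pi_*1_{\bar A}\notin DM^{Ab}_2(\eta)$, being a summand tells you nothing, and your wedge-versus-symmetric heuristic does not single out, motivically, which summand of $\eta^*\bar\pi_*1_{\bar A}$ the object $\eta^*(j_{!*}N)$ is. So this replacement step, as written, is a genuine gap: either the abelian-type $\le 2$ hypothesis holds for $\bar\pi$ (as the paper asserts) and your detour is unnecessary, or it fails and your detour does not repair it.
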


%
%%%%%%We have the following result due to Wildeshaus \cite{TODO}:
%%%%%%\begin{theorem}[see \cite{TODO}] TODO reword. 
%%%%%%	Let $DMT(M_K)$ denote the category of mixed Tate motives over $B$ (see \cite{TODO}), for any $N\in DMT(B)|_{w=0}$, $j_{!*}N$ exists in the weaker sense of \ref{def:motivicIC}. Furthermore if $N[n]$ realizes to a local system $\mathcal L$ (up to a shift) then $j_{!*}N$ realizes to $\bar j_{!*}L[-k]$ (where $\bar j_{!*}$ is the middle extension of perverse sheaves, in $D^b_c(-)$). 
%%%%%%\end{theorem}
%%%%%%\begin{proof}
%%%%%%	TODO
%%%%%%\end{proof}
%%%%%%
%%%%%%\begin{para}
%%%%%%	Assume $(P,\mathfrak X)$ is pure then the toroidal compactification gives a resolution of the Baily-Borel compactification. If the dimension of the corresponding Shimura variety is $\le 3$ then the fibres are of dimension $\le 2$ and hence the Abelian variety pieces that can occur are of dimension $\le 2$. Even if the dimension of the corresponding Shimura variety is $4$, three dimensional fibres only occur over stratum which are points. But then the fibre cannot be purely abelian. 
%%%%%%\end{para}
%%%%%%
%%%%%%\begin{corollary}
%%%%%%	Let $M^*$ denote the Baily-Borel compactification of a pure Shimura datum satisfying $(+)$ such that dimension of $M$ is $\le 4$ or $M$ is the Siegel six fold. Then the intersection motive (for constant coefficients) on $M$ exists in the stronger sense of \cite{FIXME}.
%%%%%%\end{corollary}
\bibliographystyle{alpha}
\bibliography{shimura_ic}{}

\begin{thebibliography}{Ayo07b}

\bibitem[Ayo07a]{ayoub_thesis_1}
J.~Ayoub.
\newblock Les six op{\'e}rations de {G}rothendieck et le formalisme des cycles
  {\'e}vanescents dans le monde motivique. {I}.
\newblock {\em Ast{\'e}risque}, (314):x+466 pp. (2008), 2007.

\bibitem[Ayo07b]{ayoub_thesis_2}
J.~Ayoub.
\newblock Les six op{\'e}rations de {G}rothendieck et le formalisme des cycles
  {\'e}vanescents dans le monde motivique. {II}.
\newblock {\em Ast{\'e}risque}, (315):vi+364 pp. (2008), 2007.

\bibitem[AZ12]{ayoub2012relative}
J.~Ayoub and S.~Zucker.
\newblock Relative {A}rtin motives and the reductive {B}orel--{S}erre
  compactification of a locally symmetric variety.
\newblock {\em Inventiones mathematicae}, 188(2):277--427, 2012.

\bibitem[BBD82]{BBD}
A.~A. Be{\u\i}linson, J.~Bernstein, and P.~Deligne.
\newblock Faisceaux pervers.
\newblock In {\em Analysis and topology on singular spaces, {I} ({L}uminy,
  1981)}, volume 100 of {\em Ast{\'e}risque}, pages 5--171. Soc. Math. France,
  Paris, 1982.

\bibitem[Bon10]{bondarko_weights}
M.~V. Bondarko.
\newblock Weight structures vs. {$t$}-structures; weight filtrations, spectral
  sequences, and complexes (for motives and in general).
\newblock {\em J. K-Theory}, 6(3):387--504, 2010.

\bibitem[Bon14]{bondarko2014weights}
M.~V. Bondarko.
\newblock Weights for relative motives: relation with mixed complexes of
  sheaves.
\newblock {\em International Mathematics Research Notices},
  2014(17):4715--4767, 2014.

\bibitem[Bon15]{bondarkoMixedMotivicSheaves}
M.~V. Bondarko.
\newblock Mixed motivic sheaves (and weights for them) exist if `ordinary'
  mixed motives do.
\newblock {\em Compos. Math.}, 151(5):917--956, 2015.

\bibitem[CD09]{cisinski2012triangulated}
D.~C. Cisinski and F.~D{\'e}glise.
\newblock Triangulated categories of mixed motives.
\newblock {\em Arxiv preprint}, {arXiv:math/0912.2110v2}, 2009.

\bibitem[CD16]{cisinski2016etale}
D.~C. Cisinski and F.~D{\'e}glise.
\newblock {\'E}tale motives.
\newblock {\em Compositio Mathematica}, 152(3):556--666, 2016.

\bibitem[CH00]{cortiHanamura}
A.~Corti and M.~Hanamura.
\newblock Motivic decomposition and intersection {C}how groups. {I}.
\newblock {\em Duke Math. J.}, 103(3):459--522, 2000.

\bibitem[Clo17]{cloitre2017interior}
G.~Clo{\^\i}tre.
\newblock On the interior motive of certain {S}himura varieties: the case of
  {P}icard varieties.
\newblock {\em arXiv preprint arXiv:1705.03235}, 2017.

\bibitem[DM91]{deninger1991motivic}
C.~Deninger and J.P. Murre.
\newblock Motivic decomposition of {A}belian schemes and the {F}ourier
  transform.
\newblock {\em J. reine angew. Math}, 422:201--219, 1991.

\bibitem[Eke90]{ekedahlAdic}
T.~Ekedahl.
\newblock On the adic formalism.
\newblock In {\em The {G}rothendieck {F}estschrift, {V}ol.\ {II}}, volume~87 of
  {\em Progr. Math.}, pages 197--218. Birkh\"auser Boston, Boston, MA, 1990.

\bibitem[Fan16]{fangzhou2016borel}
J.~Fangzhou.
\newblock Borel--{M}oore motivic homology and weight structure on mixed
  motives.
\newblock {\em Mathematische Zeitschrift}, pages 1--35, 2016.

\bibitem[H{\'e}b11]{hebert2011structure}
D.~H{\'e}bert.
\newblock Structure de poids {\`a} la {B}ondarko sur les motifs de {B}eilinson.
\newblock {\em Compositio Mathematica}, 147(05):1447--1462, 2011.

\bibitem[Mor08]{morelThesis}
S.~Morel.
\newblock Complexes pond\'er\'es sur les compactifications de {B}aily-{B}orel:
  le cas des vari\'et\'es de {S}iegel.
\newblock {\em J. Amer. Math. Soc.}, 21(1):23--61 (electronic), 2008.

\bibitem[NV15]{arvindVaish}
A.~N. Nair and V.~Vaish.
\newblock Weightless cohomology of algebraic varieties.
\newblock {\em Journal of Algebra}, 424:147--189, 2015.

\bibitem[Pin90]{pink1990arithmetical}
R.~Pink.
\newblock {\em Arithmetical compactification of mixed {S}himura varieties}.
\newblock PhD thesis, Rheinische Friedrich-Wilhelms-Universit{\"a}t Bonn, 1990.

\bibitem[Sai90]{saito_mhm_1990}
M.~Saito.
\newblock Mixed {H}odge modules.
\newblock {\em Publ. Res. Inst. Math. Sci.}, 26(2):221--333, 1990.

\bibitem[Sch94]{scholl1994classical}
A.~J. Scholl.
\newblock Classical motives.
\newblock In {\em Proc. Symp. Pure Math}, volume~55, pages 163--187, 1994.

\bibitem[Vai16]{vaish2016motivic}
V.~Vaish.
\newblock Motivic weightless complex and the relative {A}rtin motive.
\newblock {\em Advances in Mathematics}, 292:316--373, 2016.

\bibitem[Vai17a]{vaish2017weight}
V.~Vaish.
\newblock On weight truncations in the motivic setting.
\newblock {\em Arxiv preprint}, arXiv:1705.00789, 05 2017.

\bibitem[Vai17b]{vaish2017punctual}
V.~Vaish.
\newblock Punctual gluing of t-structures and weight structures.
\newblock {\em Arxiv preprint}, arXiv:1705.00790, 05 2017.

\bibitem[Voe00]{voevodsky}
V.~Voevodsky.
\newblock Triangulated categories of motives over a field.
\newblock In {\em Cycles, transfers, and motivic homology theories}, volume 143
  of {\em Ann. of Math. Stud.}, pages 188--238. Princeton Univ. Press,
  Princeton, NJ, 2000.

\bibitem[Wil12a]{wildeshaus_ic}
J.~Wildeshaus.
\newblock Motivic intersection complex.
\newblock In {\em Regulators}, volume 571 of {\em Contemp. Math.}, pages
  255--276. Amer. Math. Soc., Providence, RI, 2012.

\bibitem[Wil12b]{wildeshaus2009interior}
J.~Wildeshaus.
\newblock On the interior motive of certain {S}himura varieties: the case of
  {H}ilbert-{B}lumenthal varieties.
\newblock {\em International Mathematics Research Notices},
  2012(10):2321--2355, 2012.

\bibitem[Wil15a]{wildeshaus2015interior}
J.~Wildeshaus.
\newblock On the interior motive of certain {S}himura varieties: the case of
  {P}icard surfaces.
\newblock {\em Manuscripta Math.}, 148(3-4):351--377, 2015.

\bibitem[Wil15b]{wildeshaus_conservativity}
J.~Wildeshaus.
\newblock Weights and conservativity.
\newblock {\em arXiv preprint arXiv:1509.03532}, 2015.

\bibitem[Wil17a]{wildeshaus_shimura_2012}
J.~Wildeshaus.
\newblock Intermediate extension of {C}how motives of {A}belian type.
\newblock {\em Advances in Mathematics}, 305:515--600, 2017.

\bibitem[Wil17b]{wildeshaus2017intersection}
J.~Wildeshaus.
\newblock On the intersection motive of certain {S}himura varieties: the case
  of {S}iegel threefolds.
\newblock {\em arXiv preprint arXiv:1706.02743}, 2017.

\end{thebibliography}
\end{document}